\newtheorem{theorem}{Theorem}[section]
\newtheorem{proposition}[theorem]{Proposition}
\newtheorem{lemma}[theorem]{Lemma}
\newtheorem{remark}[theorem]{Remark}
\newtheorem{corollary}[theorem]{Corollary}
\newtheorem{conjecture}{Conjecture}
\newcommand{\hm}{{\widehat{M}}}
\newcommand{\hx}{{\hat{x}}}
\newcommand{\hy}{{\hat{y}}}
\newcommand{\hz}{{\hat{z}}}
\newcommand{\diag}{\operatorname{diag}}
\newcommand{\ad}{\operatorname{ad}}
\newcommand{\OO}{\operatorname{O}}
\newcommand{\PO}{\operatorname{PO}}
\newcommand{\Ein}{\operatorname{\bf Ein}^{1,n-1}}
\newcommand{\Conf}{\operatorname{Conf}}
\newcommand{\Ad}{\operatorname{Ad}}
\newcommand{\Kill}{\operatorname{Kill}}
\newcommand{\Span}{\operatorname{Span}}
\newcommand{\BN}{{\bf N}} 
\newcommand{\BR}{{\bf R}}
\newcommand{\BH}{{\bf H}}
\newcommand{\lieu}{{\mathfrak{u}}}
\newcommand{\lieg}{{\mathfrak{g}}}
\newcommand{\lieo}{{\mathfrak{o}}}
\newcommand{\liep}{{\mathfrak{p}}}
\newcommand{\lies}{{\mathfrak{s}}}
\newcommand{\oo}{{\mathfrak{o}}}
\newcommand{\so}{{\mathfrak{so}}}
\newcommand{\liea}{{\mathfrak{a}}}
\newcommand{\liem}{{\mathfrak{m}}}
\newcommand{\lieq}{{\mathfrak{q}}}
\title[A local Lorentzian Ferrand-Obata theorem]{A local Lorentzian Ferrand-Obata theorem for conformal vector fields}
\author{Sorin Dumitrescu, Charles Frances, Karin Melnick, \\
Vincent Pecastaing, Abdelghani Zeghib}
\address{Universit\'e C\^ote d'Azur, CNRS, LJAD, France}
\email{Sorin.Dumitrescu@univ-cotedazur.fr}
\address{Institut de Recherche Math\'ematique Avanc\'ee, 7, Rue Ren\'e Descartes, Universit\'e de Strasbourg, 67085, Strasbourg, France}
\email{frances@math.unistra.fr}
\address{Department of Mathematics, 6, Avenue de la Fonte, University of  Luxembourg,  L-4364 Esch-sur-Alzette, Luxembourg}
\email{karin.melnick@uni.lu}
\address{Universit\'e C\^ote d'Azur, CNRS, LJAD, France}
\email{Vincent.Pecastaing@univ-cotedazur.fr}
\address{UMPA, CNRS, Ecole Normale Sup\'erieure de Lyon, France}
\email{abdelghani.zeghib@ens-lyon.fr}
\subjclass[2020]{53B30, 53C24, 53A30}
\keywords{Lorentzian metric, conformal geometry,  Lorentzian Lichnerowicz conjecture, Killing field}
\date{\today}
\thanks{We are grateful to the organizers of the IHP trimester Group Actions and Rigidity: Around the Zimmer Program, to the IHP itself (UAR 839 CNRS-Sorbonne Université), and to LabEx CARMIN (ANR-10-LABX-59-01), for providing the stimulating environment in which this work was initiated. This work has also been supported by the French government, through the UCA$^{\text{JEDI}}$ Investments in the Future project managed by the National Research Agency (ANR) with the reference number ANR-15-IDEX-01, by the ANR grant IsoMoDyn ANR-25-CE40-1360-03 and by the Luxembourg National Research
Fund (FNR), grant reference O24/18936913/RiGA.
We wish to expressly thank Thomas Leistner for helpful conversations and comments.}
\begin{document}
\setcounter{tocdepth}{1}

\begin{abstract}
    For a conformal vector field on a closed, real-analytic, Lorentzian manifold we prove that the flow is locally isometric---that it preserves a metric in the conformal class on a neighborhood of any point---or the metric is everywhere conformally flat. The main theorem can be viewed as a local version of the Lorentzian Lichnerowicz conjecture in the real-analytic setting. The key result is an optimal improvement of the local normal forms for conformal vector fields of \cite{fm.champsconfs}, which focused on non-linearizable singularities. 
   This article is primarily concerned with essential linearizable singularities, and the proofs include global arguments which rely on the compactness assumption. 
\end{abstract}

\maketitle

\section{Introduction}
\label{sec.intro}

The Ferrand-Obata Theorem on conformal groups of Riemannian manifolds is paradigmatic in the Zimmer-Gromov program, which aims to classify compact or finite-volume manifolds with rigid geometric structures admitting large group of automorphisms.This theorem was conjectured by Lichnerowicz and proved independently by J. Ferrand and M. Obata; it characterizes closed Riemannian manifolds with non-compact conformal transformation group (see Theorem \ref{Obata} below). In contrast, the Lorentzian version  of the  Lichnerowicz conjecture is still open, although significant progress has been made in certain cases.

In this article, we establish a local version of the Lorentzian Lichnerowicz conjecture in the real analytic category.  Our results shed light on the behavior around singularities of conformal vector fields and also constitute a maximality result with respect to conformal embeddings, having as a consequence that certain noncompact Lorentzian manifolds cannot be conformally compactified; thus they link local analysis with global geometric rigidity.
 
\subsection{The Lorentzian Lichnerowicz conjecture}
\label{sec.lichne}

For a semi-Riemannian metric $g$ on a manifold, a subgroup $G$ of its conformal group $\Conf(M,g)$ is called {\it essential} if it does not preserve any metric $g'$ in the conformal class of $[g]$, namely, of the form $g'=e^{2 \lambda}g$ for $\lambda$ a function on $M$ of suitable differentiability.
In the case of a Riemannian metric on a compact manifold, a conformal group $G$ is essential if and only if it is non-compact.  In this case J. Ferrand and M. Obata each proved independently :

\begin{theorem}[Ferrand \cite{lf.lich,lf.conf.regularity}, Obata \cite{obata.lich}] \label{Obata}
    Let $(M,g)$ be a compact Riemannian manifold of dimension $n \geq 2$. If $\Conf(M,g)$ is essential then $(M,g)$ is conformally diffeomorphic to the round sphere $\mathbf{S}^n$.
\end{theorem}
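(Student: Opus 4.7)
First I would reduce ``essential'' to ``non-compact''. On a compact Riemannian manifold, any compact subgroup $K\subset\Conf(M,g)$ preserves the averaged metric $\int_K k^*g\, dk$, which lies in the conformal class of $g$; hence $\Conf(M,g)$ is essential if and only if it is non-compact as a Lie group. I therefore fix a sequence $(f_n)\subset\Conf(M,g)$ leaving every compact subset.

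Next, using compactness of $M$ I extract subsequences so that $p_n\to p_\infty$ and $q_n:=f_n(p_n)\to q_\infty$. Writing $f_n^*g=e^{2\varphi_n}g$, an Ascoli argument shows that if $\sup|\varphi_n|$ stays bounded then $(f_n)$ subconverges in $\Conf(M,g)$, contradicting divergence. Thus the conformal factors blow up; replacing $f_n$ by $f_n^{-1}$ if necessary, I arrange $e^{\varphi_n(p_n)}\to\infty$. Geometrically, $f_n$ contracts all of $M$ away from a small neighborhood of $p_\infty$ toward $q_\infty$, which is the ``north--south'' dynamics typical of essential actions.

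Third, and this is the technical heart, I perform a conformal blow-up. Using conformal normal coordinates around $q_\infty$ (or, more invariantly, the canonical Cartan connection modeled on $(\PO(1,n+1),\mathbf{S}^n)$), I renormalize $f_n$ by its conformal factor to obtain a sequence of conformal immersions $\tilde f_n : M\setminus\{p_\infty\}\to \mathbf{R}^n$ with flat target. Equicontinuity on compacta of $M\setminus\{p_\infty\}$ gives a limit conformal immersion $\Phi:M\setminus\{p_\infty\}\to\mathbf{R}^n$. For $n\geq 3$, Liouville's theorem forces $\Phi$ to be locally Möbius, so $M\setminus\{p_\infty\}$ is conformally flat. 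An injectivity argument (for instance via Ferrand's quasi-invariant conformal capacity) combined with an exhaustion/degree argument identifies $\Phi$ with a conformal diffeomorphism onto $\mathbf{R}^n=\mathbf{S}^n\setminus\{*\}$; this extends across $p_\infty$ to a conformal diffeomorphism $M\to\mathbf{S}^n$. The case $n=2$ is separate and reduces to uniformization: any closed Riemann surface other than $\mathbf{S}^2$ has a finite, hence compact, conformal automorphism group.

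The principal obstacle is step three: extracting a genuine limit $\Phi$ and showing that it is globally defined, injective, and has image the whole punctured sphere. Ferrand's original route uses her eponymous conformal capacity as a quasi-invariant to prevent collapsing; the alternative route, closer in spirit to the present paper, lifts $(f_n)$ to the Cartan bundle and exploits the north--south dynamics of divergent sequences in $\PO(1,n+1)$ on its model flag variety $\mathbf{S}^n$, thereby identifying $\Phi$ with the inverse of a stereographic projection up to a Möbius transformation.
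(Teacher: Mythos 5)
The paper does not prove Theorem \ref{Obata}: it is quoted as a classical result of Ferrand and Obata, so there is no internal proof to compare your argument against. Judged on its own terms, your outline follows the standard route (essentially Ferrand's, or Schoen's developing-map variant), and the overall architecture --- averaging to reduce essentiality to non-compactness, extracting a divergent sequence with blowing-up conformal factor, renormalizing to a limit conformal immersion into flat space, invoking Liouville, and then globalizing --- is the correct one.

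That said, as a proof it has genuine gaps precisely where the theorem is hard, and naming the obstacle is not the same as overcoming it. First, your ``Ascoli argument'' in step two presupposes that conformal maps with uniformly bounded distortion form an equicontinuous (indeed precompact) family; for $n\geq 3$ this rests on the a priori regularity and rigidity of conformal maps, which is the content of Ferrand's second paper \cite{lf.conf.regularity} and is not free. Second, the ``north--south'' contraction of $M$ minus a neighborhood of $p_\infty$ onto $q_\infty$ is asserted rather than derived from the blow-up of $\varphi_n$; establishing this stability is a real step (it is the Riemannian analogue of what Theorem \ref{thm.contraction} does in the Lorentzian setting). Third, and most seriously, the passage from ``$M\setminus\{p_\infty\}$ is conformally flat'' to ``$\Phi$ is a global conformal diffeomorphism onto $\mathbf{S}^n$ minus a point'' requires showing the developing map is injective and surjective; you correctly identify Ferrand's conformal capacity as the tool that prevents collapsing, but you do not carry out the argument, and this is exactly where naive approaches fail (a conformally flat manifold need not embed in the sphere). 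Finally, a small slip in the $n=2$ case: the conformal automorphism group of a flat torus is not finite (it contains all translations); it is merely compact, which still suffices for inessentiality. In summary: a faithful and well-organized outline of the known proof, but the three analytic pillars it rests on are cited or asserted rather than proved.
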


This theorem positively answers a question asked by A. Lichnerowicz. 

A similar question in the pseudo-Riemannian framework  was asked subsequently  in \cite[Section 6.2]{dag.rgs}. In higher signature, essentiality of the conformal group does not determine the topology of the manifold. In fact, in the Lorentzian case, the second author proved that, in any dimension $n \geq 3$, there are infinitely-many topological types of compact manifolds bearing infinitely-many non-conformally-equivalent Lorentzian metrics, each admitting an essential conformal flow \cite{frances.nogloballich}. Nevertheless, all these Lorentzian metrics are {\it conformally flat}, meaning they are locally conformally equivalent to flat Minkowski space.
The analogue of the original Lichnerowicz Conjecture is thus:

\begin{conjecture}[{\bf Lorentzian Lichnerowicz}]
    \label{conj.llc}
    Let $(M,g)$ be a compact Lorentzian manifold of dimension $n \geq 3$. If $\Conf(M,g)$ is essential then $(M,g)$ is conformally flat.
\end{conjecture}

Important results which support the Lorentzian Lichnerowicz Conjecture were obtained by the second and the third authors for real-analytic three-dimensional manifolds \cite{fm.lich3d} and by the third and the fourth authors for simply-connected real-analytic manifolds \cite{mp.dambraconf}. The fourth author proved the Lorentzian Lichnerowicz Conjecture in the case when $\Conf(M,g)$ contains an essential connected  simple Lie subgroup \cite{pecastaing.can.sl2r,pecastaing.ss}. This last result is a step  towards  the classification of conformal groups of compact Lorentzian manifolds, in the vein of the classification of their isometry groups obtained  in \cite{as.lorisom1,as.lorisom2,zeghib.lorisom1}.  The recent work \cite{mehidi} addresses the Lorentzian Lichnerowicz Conjecture for locally homogeneous spaces, and proves that compact quotients of conformally homogeneous Lorentzian spaces have essential conformal group if and only if they are conformally flat.

In higher signature the analogous pseudo-Riemannian Lichnerowicz Conjecture fails: the second author constructed non-conformally-flat pseudo-Riemannian metrics of signature $(p,q)$, with $p,q \geq 2$, admitting essential conformal flows in \cite{frances.pq.counterexs}. These pseudo-Riemannian spaces are obtained from non-conformally-flat  polynomial deformations $g$ of the $(p,q)$-Minkowski metric on 
${\BR}^{p+q}\setminus\{0 \}$, which conformally descend to compact quotients of the form $(\mathbf{S}^1 \times \mathbf{S}^{p+q-1},[g])$.

\subsection{Statement of the main theorem} 
\label{sec.statement}

Our main result can be framed in the context of Conjecture \ref{conj.llc} above.  We prove conformal flatness assuming a stronger notion of essentiality, namely that a one-parameter group of conformal transformations is essential.  This assumption implies essentiality of $\Conf(M,g)^0$; we are not sure whether it is stronger. 

A conformal vector field $Y$ on a semi-Riemannian manifold $(M,g)$ is {\it inessential} if there exists a metric in the conformal class $[g]$ for which $Y$ is a Killing field. A specific, but still challenging, case of Conjecture \ref{conj.llc} is the following.
 
\begin{conjecture}[{\bf Lorentzian Lichnerowicz for conformal vector fields}] 
\label{conj.llc.vf}
Let $(M,g)$ be a compact Lorentzian manifold of dimension $n \geq 3$. If $(M,g)$ is not conformally flat, then any conformal vector field on $M$ is inessential.
\end{conjecture}

In this statement, it is crucial to make the global assumption that $M$ is a compact manifold. D. Alekseevsky built a family of non-conformally-flat, real-analytic, Lorentzian metrics on $\BR^n$ admitting a linear conformal flow which is essential, in fact, essential on a neighborhood of every singularity \cite{aleks.selfsim}. 

Our main theorem is motivated by Conjecture \ref{conj.llc.vf}.  We assume real-analyticity and we obtain a local, rather than a global, conclusion.  A conformal vector field $Y$  is {\it locally inessential} if each point $p \in M$ admits a neighborhood $U$ on which $Y$ is a Killing field for some metric in the conformal class $[\left. g \right|_U]$. 

\begin{theorem} 
\label{thm.local.lichnerowicz}
Let $(M,g)$ be a closed, real-analytic Lorentzian manifold of dimension $\geq 3$. If $(M,g)$ is not conformally flat, then any conformal vector field on $M$ is locally inessential. 
\end{theorem}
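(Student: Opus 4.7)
The plan is to proceed contrapositively: assuming $Y$ is a conformal vector field on $(M,g)$ which fails to be locally inessential at some point, I aim to prove that $(M,g)$ is conformally flat. A first observation is that any regular point $p_0$ of $Y$ (where $Y(p_0) \ne 0$) is automatically locally inessential: straightening $Y$ in a flow-box, choosing a local transversal hypersurface $\Sigma$, restricting $g$ to $\Sigma$, and extending by the flow of $Y$, yields a $Y$-invariant metric in the conformal class on a tubular neighborhood of $p_0$. Hence any obstruction to local inessentiality must concentrate at a singularity $p$ of $Y$, i.e., a zero of $Y$, and the whole proof reduces to understanding such singularities.

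At a locally essential singularity, I would split according to whether the germ of $Y$ at $p$ is linearizable. In the non-linearizable case, the local normal forms of \cite{fm.champsconfs} give an explicit model in a neighborhood of $p$, and this model can be checked directly to have vanishing Weyl tensor (respectively, vanishing Cotton-York tensor in dimension three) on an open neighborhood of $p$; by real-analyticity of $(M,g)$, the Weyl tensor then vanishes identically, so $(M,g)$ is conformally flat, contrary to hypothesis. The genuinely new and delicate case is thus that of essential linearizable singularities, where the linear holonomy $D_p Y \in \co(T_pM, g_p)$ is already the complete local invariant. Here the first step is to establish the announced optimal refinement of the normal forms of \cite{fm.champsconfs}: among all conjugacy classes of possible linear holonomies in $\co(1,n-1)$, identify precisely those compatible with a locally essential singularity in a non conformally flat real-analytic germ.

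Once this refined algebraic classification is in hand, I would invoke compactness globally. The stable/unstable dynamics of the flow $\phi^t$ of $Y$ near $p$ allow one to contract an open set of regular points into arbitrarily small neighborhoods of $p$ while controlling the conformal distortion factor. Because $\phi^t$ is conformal, the Weyl tensor transforms with a known scaling, so renormalized limits of the Weyl tensor along contracted orbits make sense in the conformal frame bundle; compactness of $M$ forces these limits to remain bounded, and the refined normal form at $p$ forces them to lie in the specific subspace of zero Weyl curvatures prescribed by the admissible holonomies. Combining these two facts should show the Weyl tensor vanishes on a nonempty open subset of $M$, and real-analyticity then propagates the vanishing globally, again contradicting the non conformal flatness of $(M,g)$.

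The main obstacle I expect is precisely the essential linearizable case: no local jet at $p$ alone can contradict conformal flatness, so one must genuinely combine the refined local normal form with a global dynamical argument that uses the compactness of $M$ and the real-analyticity of the Weyl tensor. The subtle point is ensuring that the renormalized curvature data along the contracting directions of $\phi^t$ do not merely escape in the Cartan bundle but instead converge to tractable limits from which conformal flatness of an open set can be extracted. Threading this needle—turning an a priori only local obstruction into a global vanishing statement—is where the compactness hypothesis is indispensable and where, in the authors' words, the global arguments have to be brought in to complement the purely local improvement of \cite{fm.champsconfs}.
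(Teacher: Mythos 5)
Your reduction to singular points, the treatment of regular points by the flow-box construction, and the disposal of non-linearizable singularities via \cite{fm.champsconfs} all match the paper. The isometry-like case (zero distortion) and the genuinely contracting/expanding case are also handled essentially as in the paper: a linearizable field is locally homothetic, and a flow contracting an open set to a point forces vanishing of the Weyl (or Cotton) tensor, hence conformal flatness by analyticity. Up to this point your outline is sound.

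The genuine gap is that your global argument covers only the contracting/expanding singularities and silently omits the two cases that constitute the heart of the proof. Writing the linearized flow as $D_p\varphi^t = \mathrm{diag}(e^{(a+b)t}, e^{at}R^t, e^{(a-b)t})$ with $R^t \in \OO(n-2)$, the loxodromic possibilities $|b|>|a|>0$ (mixed type) and $|b|=|a|\neq 0$ (balanced type) produce flows with a one-dimensional unstable manifold, respectively a one-dimensional curve of fixed points; in neither case does any open neighborhood of $p$ collapse to a point under $\varphi^t$, so your mechanism of ``renormalized limits of the Weyl tensor along contracted orbits'' has no open set to act on and yields nothing. These are precisely the cases where compactness must be used in a completely different way. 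For the mixed type, the paper follows the one-dimensional stable null geodesic, uses Gromov's stratification theorem on the closed manifold to show its closure adds exactly one more singularity $x_1$, and then solves the Killing transport equation along the developed curve in $\Ein$ to identify the isotropy at $x_1$ as contracting/expanding, only then invoking the collapse argument. For the balanced type, one first builds a local gravitational pp-wave metric in the conformal class and a globally defined, $\Conf^{loc}$-invariant lightlike polarization $\mathcal{D}$, then locates a two-dimensional $\Kill^{loc}_Y$-orbit near the singularity and rules out both periodic orbits (via an algebraic analysis of $\Ad(P)$-orbit closures and the isotropy algebraic-group structure) and non-periodic recurrence (via Schwartz's generalization of Poincar\'e--Bendixson), reaching a contradiction. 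Neither of these arguments, nor any substitute for them, appears in your proposal, so as written it does not prove the theorem.
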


Although the conclusion of the theorem as stated is local, the assumption that $M$ is a compact manifold is needed because of the examples in \cite{aleks.selfsim} mentioned above. Theorem \ref{thm.local.lichnerowicz}  proves, in particular, that non-conformally-flat  examples in \cite{aleks.selfsim} do not admit real-analytic conformal compactifications, that is, they do not conformally and equivariantly embed in a closed Lorentzian manifold.  In \cite{mp.confdambra} the third and fourth authors proved this under the additional assumption that $M$ is simply connected.

Given a conformal vector field $Y$ on a Lorentzian manifold $(M,g)$, and given a point $p \in M$, the \emph{conformal distortion} of $Y$ with respect to $g$ at $p$ is the real number $\lambda$ such that $(L_Yg)_p= 2\lambda g_p$. When $Y_p=0$, the distortion at $p$ is the same for every metric in the conformal class. This is a consequence of the formula 
$$L_Y(fg)=(Y.f)g+fL_Yg \qquad \forall f \in C^\infty(M)$$ 
We say $Y$ has \emph{nontrivial conformal distortion} at a singularity $p$ if $\lambda \neq 0$. Observe that a conformal vector field with nontrivial conformal distortion at a singular point $p$ cannot preserve any metric in the conformal class, even in a neighborhood of $p$.
 
A conformal vector field $Y$ is always inessential in a neighborhood of a nonsingular point $p$---that is, with $Y_p \neq 0$. Namely, for coordinates $(t, x^1, \ldots, x^{n-1})$ on a neighborhood $U$ of $p$ in which $Y$ is $\frac{\partial}{\partial t}$, let 
$$
h_{(t,x)}(u,v) = g_{(0,x)}\big(D_{(t,x)} \varphi^{-t}_Y(u), D_{(t,x)} \varphi^{-t}_Y(v)\big).
$$
Then $h \in [\left.g\right|_U]$ is invariant under the local flow of $Y$.  Thus the content of Theorem \ref{thm.local.lichnerowicz} is the inessentiality around singularities of $Y$, that is, around points where $Y_p = 0.$  At these points, the conclusion says that $Y$ is locally isometric for a metric in the conformal class.  Local Killing fields of a metric are linearizable via the exponential map of the metric.  A corollary of Theorem \ref{thm.local.lichnerowicz}, which can be compared to \cite[Thm 1.3]{fm.champsconfs}, is : 

\begin{corollary}
\label{coro.distortion}
Let $(M,g)$ be a closed, real-analytic Lorentzian manifold of dimension $\geq 3$, and suppose that $(M,g)$ is not conformally flat. Let $Y$ be a conformal vector field on $(M,g)$ and $p \in M$ a singularity of $Y$. Then $Y$ is analytically linearizable and has trivial conformal distortion at $p$. In particular, $Y$  is analytically conjugate in a neighborhood of $p$ to a Killing vector field of Minkwoski space.
\end{corollary}

\section{Reduction to linearizable singularities of mixed or balanced type}
\label{sec.outline}

As discussed in the previous section, a nonsingular point of the conformal vector field $Y$ is always locally inessential, so the focus will be on singularities for the remainder of the article.  
A singularity of $Y$ will be called \textit{locally essential} if there is a fundamental system of neighborhoods, on each of which $Y$ is essential, and the vector field $Y$ can be called \emph{locally essential} if every singularity of $Y$ is locally essential.

In Section \ref{subsec.linearizability} below, we recall the linearizability result of \cite{fm.champsconfs} and in Section \ref{ss.taxonomy} we categorize linearizable singularities of the conformal vector field $Y$ into several types.  For certain types, local inessentiality or conformal flatness of $(M,g)$ can be quickly deduced.  The remaining types are \emph{mixed} and \emph{balanced}, and the proofs for these two occupy Sections \ref{sec.mixed-flat}, \ref{sec.pp-wave}, and \ref{sec.balanced} below.

First, we construct an example illustrating that local inessentiality can coexist with global essentiality for a conformal vector field on a closed Lorentzian manifold. 

\subsection{Global versus local essentiality} 

It follows from Theorem \ref{Obata} that Riemannian essential conformal vector fields always have essential singularities. More precisely, non-compact flows by M\"obius transformations on ${\bf S}^n$ have one or two singularities, both essential.  The notions of essentiality and local essentiality coincide for Riemannian conformal vector fields.

In Lorentzian signature, essentiality and local essentiality are distinct notions. Consider the following example, the Lorentzian Hopf manifold. Endow $\BR^3 \setminus \{ 0\}$ with the metric $\tilde{g}_x :=  g_0/|x|^2$, where $g_0$ is the Minkowski metric $2dx_1dx_3 + dx_2^2$, and $|\cdot|$ denotes the Euclidean norm. Let $h$ be the homothetic transformation $x \mapsto 2x$, and let $\Gamma$ be the discrete group generated by $h$. The quotient of $\BR^3 \setminus \{ 0\}$ by $\Gamma$ is a smooth manifold $M$, diffeomorphic to $\mathbf{S}^1 \times \mathbf{S}^2$, endowed with the Lorentzian quotient metric, denoted $g$. The one-parameter group of homotheties of $\BR^3$ descends to an isometric $\mathbf{S}^1$-action on $(M,g)$; denote it by $\{k^t\}$. The linear $O(1,2)$-action descends to an essential conformal action on $(M,g)$. 

Consider a unipotent one-parameter subgroup $\{\tilde{u}^t\} < O(1,2)$. It induces a one-parameter subgroup $\{u^t\} < \Conf(M,g)$, generated by a conformal vector field $U$. The isotropic line of fixed points of $\{u^t \}$ projects onto two closed null geodesics $\Delta_i$, $i=1,2$, of $(M,g)$, which are  fixed pointwise by $\{u^t\}$. Given $p \in \Delta_i$, it is clear that the conformal distortion of $U$ at $p$ is null; moreover, the metric $g_0$ descends to a Lorentzian metric in the conformal class, well-defined in a neighborhood of $p$, which is $\{u^t\}$-invariant on this neighborhood.  In particular---see also Lemma \ref{lem.linearizable-homothetic} below---the flow $\{ u^t \}$ is locally inessential at $p$. 
However, $\{u^t\}$ is an essential conformal one-parameter subgroup of $\Conf(M,g)$. Indeed, all orbits of the flow spiral around either $\Delta_1$ or $\Delta_2$. If $g'$ were a $\{u^t \}$-invariant metric on $M$ in $[g]$, then for any $p \in M$, the values 
$$g'_p(U,U) = g'_{u^t(p)}(U,U) \rightarrow 0,$$ 
contradicting the fact that $U$ is spacelike on a dense, open subset. Thus $\{u^t\}$ is an essential conformal flow of $(M,g)$ all of whose singularities are locally inessential. 

The commutative product $\varphi^t = k^tu^t$ has no singularity on $M$, for all $t \neq 0$. It is another essential one-parameter group: indeed, if the convex $\{ k^t \}$-invariant subset $ \{g' \in [g] \ | \ (\varphi^t)^* g' = g'\}$ were non-empty, averaging over $\{k^t\}$ would yield a metric in the conformal class which is both $\{ k^t \}$-and $\{ \varphi^t \}$-invariant, therefore $\{ u^t \}$-invariant: a contradiction. Hence, even in dimension $3$, a Lorentzian conformal vector field can be everywhere non-singular and globally essential. 

\subsection{Linearizability of conformal vector fields}
\label{subsec.linearizability}

It is well known that, in the neighborhood of a singular point, any Killing field is linear in exponential coordinates. A vector field $Y$ is \emph{linearizable} on a neighborhood $U$ of a singularity $p$ if there are coordinates on $U$ in which the flow $\{ \varphi^t_Y \}$ equals the linear flow $\{ D_p \varphi^t_Y \}$, where defined.  Linearizability generally fails for conformal vector fields. Although some techniques are available to study such  fields near a singularity, there is currently no classification of their possible local normal forms in the smooth setting.

In the real-analytic case, the situation is simplified: non-linearizability occurs only in the conformally flat case, and then the list of possible normal forms are those of local conformal flows on Minkowski space, which can be calculated with Lie theory.  The following key result reduces the proof of Theorem~\ref{thm.local.lichnerowicz} to the case in which $Y$ is linearizable around every singularity.  

\begin{theorem} (\cite[Thm 1.2]{fm.champsconfs})
\label{thm.linearizable}
Let $(M,g)$ be a connected, real-analytic Lorentzian manifold of dimension $\geq 3$. Let $Y$ be a local conformal vector field vanishing at $p \in M$. If $(M,g)$ is not conformally flat, then $Y$ is linearizable in a neighborhood of $p$.
\end{theorem}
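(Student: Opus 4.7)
The plan is to work inside the normal Cartan geometry $\pi\colon \hat M\to M$ modeled on $(G,P)$ with $G=\PO(2,n)$ and $P$ the stabilizer of an isotropic line, equipped with the normal Cartan connection $\omega$ and curvature function $\kappa\colon \hat M\to\Hom(\Lambda^2(\lieg/\liep),\lieg)$, whose identical vanishing is equivalent to $(M,g)$ being conformally flat. The conformal vector field $X$ lifts canonically to an $\omega$-preserving vector field $\hat X$ on $\hat M$. Choose $\hat x\in\pi^{-1}(x)$; since $X(x)=0$, $\hat X$ is vertical at $\hat x$, so the ``algebraic holonomy'' $Y:=\omega_{\hat x}(\hat X)\in\liep$ is well defined. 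Invariance of $\kappa$ under the flow of $\hat X$ combined with $P$-equivariance of $\kappa$ forces $\ad Y$ to annihilate $\kappa(\hat x)$.

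The first step is to take the Jordan decomposition $Y = Y_s+Y_n$ in $\lieg$, with $Y_s$ semisimple, $Y_n$ nilpotent, $[Y_s,Y_n]=0$; both lie in $\liep$ since $\liep$ is an algebraic subalgebra. The key construction is to upgrade this algebraic splitting to a geometric one. In the real-analytic setting, I would invoke a Nomizu-type extension principle for infinitesimal automorphisms of Cartan geometries (the analogue for $(\hat M,\omega)$ of the classical propagation of local Killing fields on a real-analytic pseudo-Riemannian manifold) to produce commuting, $\omega$-preserving local vector fields $\hat X_s$ and $\hat X_n$ on a neighborhood $\hat U$ of $\hat x$ satisfying $\omega_{\hat x}(\hat X_s)=Y_s$, $\omega_{\hat x}(\hat X_n)=Y_n$, and $\hat X_s+\hat X_n=\hat X$. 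Projecting yields commuting local conformal vector fields $X_s,X_n$ near $x$ with $X = X_s + X_n$.

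The $\omega$-exponential chart at $\hat x$, sending $\xi\in\lieg/\liep$ to $\pi\bigl(\phi^1_{\omega^{-1}(\xi)}(\hat x)\bigr)\in M$, automatically linearizes $X_s$: it converts $\hat X_s$ into the linear flow $\exp(tY_s)$ acting on $\lieg/\liep\cong T_xM$. Consequently, the obstruction to linearizing $X$ lives entirely in $X_n$, and the remaining task is to prove $X_n=0$ under the non-conformal-flatness assumption.

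The hard part is to show that a nonzero $\hat X_n$ forces $\kappa\equiv 0$ on a neighborhood of $\hat x$. My strategy is algebraic-dynamical. The element $Y_n$ sits in positive-weight subspaces of the $\ad(Y_s)$-grading of $\lieg$; iterated application of $\ad(Y_n)$ preserves vanishing on $\kappa(\hat x)$ and shifts weights, thereby killing successive weight components of $\kappa(\hat x)$ transverse to the centralizer of $Y_n$. A careful examination of the $\liep$-module structure of $\Hom(\Lambda^2(\lieg/\liep),\lieg)$, together with the Bianchi-type identities and the normalization condition satisfied by the conformal Cartan curvature, rules out any nonzero contribution and forces $\kappa(\hat x)=0$. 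Propagating this vanishing along $\omega$-constant directions through $\hat U$ and invoking real-analyticity of $\kappa$ then gives $\kappa\equiv 0$ on an open subset of $\hat M$, hence on the connected component by analytic continuation---contradicting the hypothesis that $(M,g)$ is not conformally flat. Therefore $X_n=0$, and $X=X_s$ is linearizable on a neighborhood of $x$.
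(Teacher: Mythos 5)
This statement is quoted from \cite{fm.champsconfs} and is not reproved in the present paper, so your attempt has to be measured against the proof given there. Your framework (normal Cartan geometry, the holonomy $Y=\omega_{\hx}(\hat X)\in\liep$ at the singularity, Jordan decomposition, reduction to showing that a ``bad'' holonomy forces $\kappa$ to vanish) is the right one, but the argument aims at the wrong target. Linearizability of $X$ at $x$ is equivalent to the holonomy $Y$ being conjugate in $P$ into the reductive part $\lieg_0\cong\BR\oplus\so(1,n-1)$; it is \emph{not} equivalent to the vanishing of the nilpotent Jordan component $Y_n$. A nilpotent element of $\lieg_0$ itself---a null rotation, i.e.\ the generator of a unipotent one-parameter subgroup of $\OO(1,n-1)$---is a perfectly admissible linear part. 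Concretely, $g=2\,du\,dv+dx^2+H(u,2uv+x^2)\,du^2$ on $\BR^3$ is invariant under the linear Killing field $N=u\partial_x-x\partial_v$, which vanishes at the origin with purely nilpotent holonomy, and for generic analytic $H$ this metric is not conformally flat; analogous examples exist in all dimensions. Your program of proving ``$X_n=0$ unless $(M,g)$ is conformally flat'' would therefore force a false conclusion. The correct obstruction is the component of $Y$ in the nilradical $\lieg_{+1}$ of $\liep$ after conjugating $Y_s$ into $\lieg_0$, and the theorem amounts to showing that this $\lieg_{+1}$-part can also be conjugated away when the curvature is not identically zero.

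Two further points. First, producing commuting local conformal fields $X_s$, $X_n$ from the Jordan decomposition is not a consequence of a Nomizu-type extension principle (which propagates a given local Killing field but does not create new ones with prescribed $1$-jets); what is needed is that the isotropy algebra is algebraic, so that Jordan components of its elements remain inside it---this is exactly the content of Proposition \ref{prop.isotropy-algebraic}, and it does hold in the analytic category, so this step is repairable. Second, and more seriously, the heart of the matter---that a holonomy with nontrivial $\lieg_{+1}$-part forces conformal flatness---is only gestured at. The relation that the module action of $Y$ annihilates $\kappa_{\hx}$, together with weight-shifting by $\ad(Y_n)$, constrains the curvature only at the single fiber over $x$, and even there does not obviously kill all of $\kappa_{\hx}$; it certainly does not yield vanishing on an open set, and ``propagating along $\omega$-constant directions'' has no mechanism behind it. The proof in \cite{fm.champsconfs} is genuinely dynamical: a holonomy with nontrivial unipotent part in $P_+$ produces strongly contracting behavior of $\varphi^t_X$ along distinguished curves (photons) through $x$, and a degeneration result in the spirit of Theorem \ref{thm.contraction} forces the Weyl curvature to vanish on the corresponding stable sets, after which analyticity spreads the vanishing to all of $M$. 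That dynamical input is absent from your argument and cannot be replaced by the purely algebraic considerations you describe.
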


Linearizable conformal vector fields are locally homothetic.  It is likely that this fact already appears in the literature, but we do not know a reference, thus we provide a short proof. 

\begin{lemma}
\label{lem.linearizable-homothetic}
Let $(M,g)$ be a semi-Riemannian manifold, and $Y$ a conformal vector field vanishing at $p \in M$.  Assume that 
$Y$ is linearizable in a neighborhood $U$ of $p$, and that the  conformal distortion of $Y$ at $p$ 
is $a$. Then there exists $h \in [\left. g\right|_U]$ for which $Y$ is homothetic with distortion $a$, that is, $L_Y h = 2 a h$.
\end{lemma}

\begin{proof}
Let $a$ be the conformal distortion of $Y$ at the singularity $p$.
Since $Y$ is a conformal vector field, there is a smooth function $\sigma \in C^\infty(M)$ such that $L_Y g = 2 \sigma g$. The metric $g$ defines a volume form $\omega_g$, for which $L_Y \omega_g = n \sigma \omega_g$. In the local linearization around $p$, the flow is equivalent to its derivative, which is a linear homothety.  Pulling back the volume on $\BR^n$ by this coordinate chart gives a volume form $\omega_0$ on a neighborhood $U$ for which $L_Y \omega_0 = n a \omega_0$. After replacing $\omega_0$ with $-\omega_0$ if necessary, we may write $\omega_g = e^{n \lambda} \omega_0$ on $U$, for some $\lambda \in C^\infty(U)$. It follows that
$$
L_Y \omega_g = (n (Y.\lambda) + n a) e^{n \lambda} \omega_0 = (n (Y.\lambda) + n a) \omega_g.
$$
Together with $L_Y \omega_g = n \sigma \omega_g$, this yields
$$
\sigma - Y.\lambda = a.
$$
Let $h := e^{-2 \lambda} g$.  
On the open set $U$,
$$
L_Y h = L_Y(e^{-2 \lambda} g) = - 2 Y.\lambda e^{-2 \lambda} g + e^{-2 \lambda} L_Y g = 2 e^{-2 \lambda} (- Y.\lambda + \sigma)  g = 2 a h.
$$
In other words, $Y$ is a homothetic vector field for $h$ with conformal distortion at $p$ equal $a$. 
\end{proof}

Under the hypotheses of Theorem~\ref{thm.local.lichnerowicz}, and because of Theorem~\ref{thm.linearizable}, Lemma~\ref{lem.linearizable-homothetic} implies, in particular, that around a singularity with trivial conformal distortion, a conformal vector field is locally inessential.

\subsection{Taxonomy of singularities for a linearizable conformal vector field}
\label{ss.taxonomy}

Let $Y$ be a local conformal vector field on a Lorentzian manifold $(M,g)$. Assume that $Y_p=0$, and that $Y$ is linearizable in a neighborhood of $p$.  
In what follows, the local flow generated by $Y$ is denoted by $\{\varphi^t_Y\}$ (as $Y$ is not assumed complete, $\{\varphi^t_Y\}$ may be only a local flow, defined on a neighborhood of $(0,p) \in \BR \times M$). The derivative flow $\{D_p \varphi^t_Y\}$ can be written as a product $\{e^{a t} A^t\}$, where $a \in \BR$ is the conformal distortion of $Y$ at $p$, and $\{A^t\}$ is a one-parameter subgroup of $\operatorname{O}^0(1,n-1)$. Such one-parameter subgroups fall into three categories (see e.g. \cite{Ratcliffe}, \cite{Matsumoto}).  Recall that $\mbox{O}^0(1,n-1) \cong \mbox{Isom}^0({\bf H}^{n-1})$.

\begin{itemize}
\item {\it elliptic flows.} These are conjugate to one-parameter subgroups of the compact group $\operatorname{SO}(n-1) \subset \operatorname{O}^0(1,n-1)$.  
\item {\it parabolic flows.} These have exactly one fixed point on the boundary $\partial \BH^{n-1}$ and can be written as a commuting product $\{K^t U^t\}$ of an elliptic and a nontrivial unipotent one-parameter subgroup.  
\item {\it loxodromic flows.} These have exactly two fixed points on $\partial \BH^{n-1}$, and can be written as a commuting product $\{K^t D^t\}$ of an elliptic and a nontrivial hyperbolic one-parameter subgroup. 
\end{itemize}

For linear flows $\{D_p \varphi^t_Y\} = \{e^{a t} A^t\}$ with $A^t$ loxodromic, we may write in a suitable frame of $T_pM$:
\begin{equation}
    \label{eqn.loxo.diffl}
D_{p} \varphi^t_Y =
\begin{pmatrix}
e^{(a+b)t} & & \\
& e^{a t} R^t & \\
& & e^{(a-b)t}
\end{pmatrix},
\end{equation}
with $b  > 0$, and $\{R^t\} < \operatorname{SO}(n-2)$ a one-parameter subgroup.
The linearizable singularities of $Y$ can be classified as follows.

\begin{enumerate}
\item \emph{inessential}: 
 the conformal distortion of $Y$ at $p$ vanishes. 
\item \emph{contracting or expanding}:
 $\lim_{t \to \pm\infty} |D_p \varphi^t_Y| = 0$ 
\item \emph{mixed}: 
$\{D_p\varphi^t_Y\}$ has the form (\ref{eqn.loxo.diffl}) with $|b| > |a| > 0$  
\item \emph{balanced}: 
$\{D_p\varphi^t_Y\}$ has the form (\ref{eqn.loxo.diffl})
with $|b| = |a| \neq 0$
\end{enumerate}

 If $p$ is an expanding singularity of $Y$, then it is a contracting singularity of $-Y$.  Note that contracting singularities need not have $\{ A^t \}$ loxodromic. For instance, if $\{D_p \varphi^t_Y\}$ is of the form $\{ e^{a t} K^t U^t\}$ with $a < 0$, and $K^t$ and $U^t$ are commuting elliptic and parabolic flows of $\operatorname{O}^0(1,n-1)$, then $p$ is a contracting singularity (and it is expanding if $a > 0)$.  In the loxodromic case, when $\{D_p\varphi^t_Y\}$ has the form (\ref{eqn.loxo.diffl}), then contracting corresponds to $a<-b<0$. 
 In any case, existence of a contracting singularity is known to imply conformal flatness, by the following theorem.

\begin{theorem}[\cite{frances.degenerescence}, Thm~1.3~(2)]
\label{thm.contraction}
Let $(M,g)$ be a smooth Lorentzian manifold. Assume that $(f_k) \subset \operatorname{Conf}^{loc}(M,g)$, is a sequence of local conformal transformations, all defined on an open set $U$.  If, for some point $x_0 \in M$, the images $f_k(U) \to x_0$ in the Hausdorff topology, then $U$ is conformally flat. In particular, if $(M,g)$ is real-analytic, it is conformally flat.
\end{theorem}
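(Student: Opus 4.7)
The plan is to work in the normal conformal Cartan geometry framework. The Lorentzian conformal class $[g]$ on $M$ corresponds canonically to a $\mathbf{P}$-principal bundle $\pi : \hm \to M$ equipped with a normal Cartan connection $\omega \in \Omega^1(\hm, \lieg)$ modeled on $(\mathbf{G}, \mathbf{P})$, where $\mathbf{G} = \PO(2, n)$ and $\mathbf{P}$ is the stabilizer of a null line; the model space is $\mathbf{G}/\mathbf{P} \simeq \Ein$. The Lie algebra admits a $|1|$-grading $\lieg = \lieg_{-1} \oplus \lieg_0 \oplus \lieg_1$ with $\liep = \lieg_0 \oplus \lieg_1$ and $\lieg_{-1} \simeq \BR^{1,n-1}$. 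Every $f_k \in \Conf^{loc}(M,g)$ lifts uniquely to a bundle automorphism $\hat{f}_k$ preserving $\omega$, and thus also the curvature $\kappa = d\omega + \frac{1}{2}[\omega, \omega]$, which by normality takes values in $\liep$.

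I would then fix $x$ in a relatively compact open subset $V \subset U$, pick $\hx \in \pi^{-1}(x)$, and choose a smooth local section $s$ of $\hm$ over a neighborhood of $x_0$. For large $k$, the hypothesis $f_k(x) \to x_0$ lets us write $\hat{f}_k(\hx) = s(f_k(x)) \cdot p_k$ for some $p_k \in \mathbf{P}$. Invariance of $\kappa$ under $\hat{f}_k$ combined with the $\mathbf{P}$-equivariance $R_p^* \kappa = \Ad(p^{-1}) \kappa$ yields the central identity
$$
\kappa(\hx) = \Ad(p_k^{-1}) \cdot \kappa(s(f_k(x))).
$$
As $k \to \infty$, the second factor converges to $\kappa(s(x_0))$ and is bounded; meanwhile the Hausdorff collapse $f_k(V) \to \{x_0\}$ translates, via the identification of $D_x f_k$ with the action of $\Ad(p_k)$ on $\lieg/\liep \simeq \lieg_{-1}$, into $\Vert \Ad(p_k)|_{\lieg_{-1}} \Vert \to 0$. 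By the Killing-form duality between $\lieg_{-1}$ and $\lieg_1$, this is equivalent to $\Vert \Ad(p_k^{-1})|_{\lieg_1} \Vert \to 0$, so the $\lieg_1$-component of the right-hand side vanishes in the limit. A parallel but more delicate dynamical analysis on $\lieg_0$, handling the various Jordan types (elliptic, unipotent, loxodromic) of the diverging sequence $p_k$, is then needed to show that the $\lieg_0$-component shrinks to zero as well. Together these give $\kappa(\hx) = 0$, and since $x \in V$ and $\hx \in \pi^{-1}(x)$ were arbitrary, $\kappa \equiv 0$ on $\pi^{-1}(V)$.

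By the fundamental theorem of Cartan geometries, the vanishing of the normal curvature over $V$ means $(V, [g|_V])$ is locally equivalent to the model $\Ein$, i.e., conformally flat. Since $V \subset U$ was an arbitrary relatively compact open subset, the whole of $U$ is conformally flat. The real-analytic extension is immediate: the obstruction to conformal flatness --- the Weyl tensor when $n \geq 4$, the Cotton tensor when $n = 3$ --- is a real-analytic section of a natural tensor bundle, so its vanishing on the open set $U$ propagates to the connected component of $M$ containing it. The main obstacle is the $\lieg_0$ analysis: the center of $\mathbf{P}$ acts trivially on $\lieg_0$, so eliminating the Weyl-type component of $\kappa$ requires a careful case-by-case dynamical analysis of $p_k$, controlling the behavior of $\Ad(p_k^{-1})$ on $\lieg_0$ against the enforced contraction on $\lieg_{-1}$. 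This is the technical heart of the argument of \cite{frances.degenerescence}.
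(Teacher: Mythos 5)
First, a structural remark: the paper does not prove Theorem \ref{thm.contraction} at all --- it is imported verbatim from \cite{frances.degenerescence}, Thm~1.3~(2) --- so there is no internal proof to compare against, and your proposal has to be measured against the argument in that reference. Your outline does reproduce its architecture faithfully: lift the $f_k$ to the normal Cartan bundle, write $\hat{f}_k(\hx)=s(f_k(x))\cdot p_k$, use the $P$-equivariance of the curvature function to convert the collapse of $f_k$ into a dynamical statement about $\Ad(p_k)$ on the module $\wedge^2(\lieg/\liep)^*\otimes\liep$, and conclude $\kappa\equiv 0$ over $U$, hence conformal flatness, with analyticity propagating it to $M$. (Minor point: your displayed identity suppresses the precomposition by $\Ad(p_k)$ in the two arguments of $\kappa$; the correct module action is $\kappa_{\hx}(u,v)=\Ad(p_k^{-1})\,\kappa_{s(f_k(x))}(\Ad(p_k)u,\Ad(p_k)v)$, which you do implicitly use afterwards.)

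The two steps you treat as routine are, however, where the entire content of the theorem lives, and as written the argument does not close. (a) Passing from the Hausdorff collapse $f_k(U)\to x_0$ to $\Vert\Ad(p_k)|_{\lieg/\liep}\Vert\to 0$ is not automatic: collapse of a set does not control derivatives, and since the isotropy $P$ is non-compact the holonomies $p_k$ could a priori degenerate in directions unrelated to the contraction; pinning down the asymptotics of $p_k$ (via a $KAK$-type decomposition in $P$ and the equivariance (\ref{eq.equivariance-exponential}) of exponential charts) is a substantial portion of \cite{frances.degenerescence}. (b) More seriously, the $\lieg_0$-component does not ``shrink to zero'' by any soft estimate. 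The Lorentzian collapse forces $\Ad(p_k)|_{\lieg_{-1}}=e^{a_k}A_k$ with $A_k\in\OO(1,n-1)$, so its singular values are $e^{a_k+|b_k|},e^{a_k},\dots,e^{a_k},e^{a_k-|b_k|}$ with $a_k+|b_k|\to-\infty$; the arguments of $\kappa$ then decay at worst like $\lambda_k^2$ with $\lambda_k=e^{a_k+|b_k|}$, while $\Ad(p_k^{-1})$ expands on $\lieg_0$ like $\lambda_k/\mu_k$ with $\mu_k=e^{a_k-|b_k|}$, and the resulting bound $\lambda_k^3/\mu_k=e^{2a_k+4|b_k|}$ can blow up when $|b_k|$ dominates. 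Closing the argument requires the weight-by-weight analysis of the curvature module against the specific Lorentzian form of $\Ad(p_k)$ --- precisely the point where the signature hypothesis enters, and why in signature $(p,q)$ with $p,q\ge 2$ the same mechanism only yields degeneracy of the Weyl tensor rather than conformal flatness. Deferring this computation to ``the technical heart of \cite{frances.degenerescence}'' means the decisive step of the proof has not been supplied.
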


When $p$ is a contracting singularity and $(M,g)$ is real-analytic, it is evident from the linearization that the local flow $\{ \varphi^t_Y\}$ is defined for all $t \geq 0$ on a neighborhood $U$ of $p$ and converges uniformly on compact subsets of $U$ to the constant map $p$ as $t \rightarrow + \infty.$  In the smooth setting, the same behavior is ensured by the Hartman-Grobman Theorem \cite[Theorem 6.3.1]{katok.hasselblatt}. It now follows:
\begin{corollary}
\label{coro.contraction}
Let $(M,g)$ be a smooth Lorentzian manifold. Suppose that $M$ admits a local conformal vector field with a contracting or expanding singularity $p$. Then a neighborhood of $p$ is conformally flat. In particular if $(M,g)$ is real analytic, it is conformally flat.
\end{corollary}

Thus under the hypotheses of Theorem \ref{thm.local.lichnerowicz}, expanding or contracting singularities do not occur.

\subsection{Summary}
\label{ss.mixed-balanced}

The discussion above shows that, given a real-analytic Lorentzian manifold $(M,g)$ which is not conformally flat, and a conformal vector field $Y$ on $M$, the field $Y$ is locally inessential or it has a singularity of mixed or balanced type, as defined in (3) and (4) above.

In the remainder of the paper, we shall show that under the additional assumption that $M$ is compact, such singularities imply $(M,g)$ is conformally flat. The proofs in these cases are more subtle, as they rely on global dynamical arguments related to the compactness of $M$.  

The case of a mixed-type singularity will be treated in Section~\ref{sec.mixed-flat}. Proposition \ref{prop.mixed} says that the presence of such a singularity on a compact real-analytic Lorentzian manifold implies the existence of a contracting singularity elsewhere in the manifold. Conformal flatness will follow by Corollary~\ref{coro.contraction}.

The case of a balanced-type singularity is the most delicate and will occupy a large part of the article. It was proved in \cite[Prop 4.1]{mp.confdambra} that existence of a balanced-type singularity implies conformal flatness or existence of a real-analytic codimension-one foliation. As in that proof, we show in Theorem \ref{thm.pp-wave} that in a neighborhood of a balanced-type singularity, there exists a \emph{gravitational pp-wave} metric in the conformal class. Combined with the assumption of real-analyticity, this leads to what is called a \emph{polarization} of the conformal structure; see the beginning of Section \ref{sec.pp-wave} below for definitions. In Section~\ref{sec.balanced}, global dynamical arguments will then show that a second polarization must appear, which ultimately implies conformal flatness and completes the proof of Theorem~\ref{thm.local.lichnerowicz}.

\section{The Cartan geometry framework and Gromov's Stratification}
\label{sec.geometric}

\subsection{Conformal Cartan connections and curvature}
\label{sec.cartan}

The {\it normal Cartan connection} associated with a conformal Lorentzian structure will play an important role in our proofs. We recall here some definitions and basic results about this framework.

\subsubsection{Equivalence problem for Lorentzian conformal structures}
\label{sec.equivalence}

Let $\BR^{2,n}$ denote $\BR^{n+2}$ with standard basis $\{ e_0, \ldots, e_{n+1} \}$, equipped with the quadratic form
$$Q_{2,n}(x) = 2x_0 x_{n+1} + 2x_1 x_n + x_2^2 + \cdots + x_{n-1}^2.$$
The Lorentzian Einstein Universe, denoted $\Ein$, is the projectivization of the null cone 
$$\mathcal{N}^{2,n} \setminus \{0\} = \{ {\bf x} \in \BR^{n+2} \setminus \{0\} \mid Q_{2,n}({\bf x}) = 0\}.$$

It is a smooth quadric hypersurface of ${\bf RP}^{n+1}$ that naturally inherits a conformal class $[g_{1,n-1}]$ of Lorentzian signature from the ambient quadratic form on $\BR^{2,n}$. It is diffeomorphic to ${\bf S}^1 \times_\iota {\bf S}^{n-1}$, where $\iota$ is inversion on both factors. By construction, there is a transitive conformal action of the group $G = \PO(2,n)$ on $\Ein$, and in fact $\Conf(\Ein,[g_{1,n-1}]) \cong G$. 

Now $\Ein$ is a compact, conformally homogeneous space, identified with $G/P$, where $P$ is the parabolic subgroup of $G$ stabilizing an isotropic line in $\BR^{2,n}$. It was proved by \'E. Cartan that in dimension $n \geq 3$, each conformal Lorentzian structure defines a unique Cartan geometry infinitesimally modeled on $\Ein$ in the following sense. 

\begin{theorem}[\'E. Cartan; see \cite{sharpe} Ch.V, and \cite{cap.slovak.book.vol1}, Sec.~1.6]
\label{thm.cartan}
Let $M$ be a connected manifold of dimension $n \geq 3$. A conformal Lorentzian structure $[g]$ on $M$ canonically determines:
\begin{itemize}
\item a principal $P$-bundle $\pi: \hm \rightarrow M$; and 
\item a regular, normal Cartan connection: a one-form $\omega : \hm \to \lieg$ satisfying, for all $\hat{x} \in \hm$,
\begin{enumerate}
 \item $\omega_{\hat{x}} : T_{\hat{x}} \hm \stackrel{\sim}{\rightarrow} \lieg$ is a linear isomorphism;
 \item $(R_p)^*\omega = \mathrm{Ad}(p^{-1}) \circ \omega$ for all $p \in P$;
 \item $\omega \!\left( \frac{d}{dt} (\hat{x} \cdot e^{tY}) \right) \equiv Y$ for all $Y \in \liep$.
\end{enumerate}
\end{itemize}
\end{theorem}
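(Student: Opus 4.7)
The plan is to apply the classical Cartan method of equivalence, or in modern terms Tanaka prolongation, to the parabolic pair $(G,P)$ with $G=\PO(2,n)$. The Lie algebra $\lieg$ carries a $|1|$-grading
$$\lieg = \lieg_{-1} \oplus \lieg_0 \oplus \lieg_1,$$
with $\lieg_{-1} \cong \BR^{1,n-1}$ abelian, $\lieg_0 \cong \co(1,n-1)$ the Levi factor with corresponding subgroup $G_0 \cong \CO(1,n-1)$, and $\lieg_1 \cong (\BR^{1,n-1})^*$ generating the abelian nilradical $P_+ = \exp(\lieg_1)$, so that $P = G_0 \ltimes P_+$. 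The strategy is to build $\hm$ in two stages reflecting this decomposition and then produce $\omega$ from geometric data attached to a representative metric.

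First I would construct the bundle. The conformal class $[g]$ canonically reduces the linear frame bundle of $M$ to a principal $G_0$-bundle $\hm_0 \to M$, whose points are frames orthonormal for some representative of $[g]$. I would then prolong once: define $\hm \to \hm_0$ to be a principal $P_+$-bundle whose fiber encodes the second-order data (a Schouten-type symmetric tensor) required to upgrade a $(\lieg_{-1} \oplus \lieg_0)$-valued coframe into an absolute parallelism with values in $\lieg$. The resulting $\hm \to M$ becomes a principal $P$-bundle. To define $\omega$ I would pick any representative $g \in [g]$: its soldering form gives the $\lieg_{-1}$-component, its Levi-Civita connection the $\lieg_0$-component, and its Schouten tensor the $\lieg_1$-component. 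The conformal transformation laws of these objects match the $P_+$-action on the fibers of $\hm \to \hm_0$, so the resulting one-form is independent of the choice of $g$, yields a linear isomorphism $T_{\hx}\hm \to \lieg$ at each point, is $P$-equivariant, and reproduces fundamental vector fields of $\liep$ by construction.

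The main obstacle is uniqueness, which requires the normality condition $\partial^*\kappa = 0$. Any two such extensions of the $\lieg_{-1}\oplus\lieg_0$-data differ by a $\lieg_1$-valued one-form, and the ambiguity in the curvature $\kappa$ is controlled by the Lie algebra cohomology $H^*(\lieg_{-1},\lieg)$. The crux is Kostant's Hodge decomposition via the codifferential $\partial^*$, which forces normality to select a unique representative in each ambiguity class; regularity is then automatic in the $|1|$-graded setting since $\lieg_{-1}$ is abelian and the Levi-Civita connection is torsion-free. The explicit computation of this cohomology is the delicate Lie-algebraic step; the details are executed in Sharpe Ch.~V and \v{C}ap--Slov\'ak \S1.6.
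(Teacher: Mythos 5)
The paper does not prove this theorem: it is stated as a classical result of \'E.~Cartan and cited directly from Sharpe, Ch.~V and \v{C}ap--Slov\'ak, \S 1.6. Your sketch is a correct outline of exactly the standard construction in those references (reduction to a $\CO(1,n-1)$-structure, one prolongation encoding the Schouten tensor, and uniqueness via Kostant's codifferential and normality), so it matches the intended proof; the only caveat is that the genuinely technical content --- the Kostant-type cohomology computation ensuring existence and uniqueness of the normal connection --- is deferred to the references rather than carried out.
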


Regularity and normality are technical conditions on the curvature of $\omega$ (see Section \ref{sec.curvature} below) ensuring uniqueness. For their detailed definitions, refer to \cite{sharpe} and \cite{cap.slovak.book.vol1}. 

For the homogeneous model $\Ein$, the Cartan bundle is the group $G$, itself, which naturally fibers over $G/P$. 
The regular, normal connection associated with the conformal structure is in this case the Maurer--Cartan connection $\omega^G$.

Given the pair $(\hm, \omega)$, the conformal structure $[g]$ is recovered as follows. Let $x \in M$ and $\hx \in \hm_x$. For any $u \in T_xM$ and any $\hat{u} \in T_{\hx}\hm$ such that $\pi_*(\hat{u}) = u$, define $\iota_{\hx}(u)$ to be the projection of $\omega_{\hx}(\hat{u})$ to $\lieg/\liep$. This yields a well-defined linear isomorphism $\iota_{\hx}: T_xM \to \lieg/\liep$.  
Equivariance of $\omega$ with respect to the right-$P$-action on $\hm$, condition (2) in Theorem \ref{thm.cartan}, implies that for every $\hx \in \hm$ and every $p \in P$:
\begin{equation}
\label{eq.equivariant-iota}
 \iota_{\hx \cdot p} = \Ad(p^{-1}) \circ \iota_{\hx}.
\end{equation}
 The representation of $P$ on $\lieg/\liep$ derived from the adjoint representation factors through the quotient $P/G_+$, where $G_+$ is the unipotent radical of $P$, and preserves a unique similarity class, call it $[\ \mathbb{I} \ ]$, of Lorentzian scalar products on $\lieg/\liep$. For every $x \in M$ and $\hx$ in the fiber over $x$:
\begin{equation}
\label{eq.conformal-class}
[g_x]= [ \ \iota_{\hx}^* \mathbb{I} \ ].
\end{equation}

\subsubsection{Conformal curvature}
\label{sec.curvature}

Let $[g]$ be a Lorentzian conformal structure on a manifold $M$, and let $\omega$ denote the normal Cartan connection associated with $[g]$ as in Theorem \ref{thm.cartan}. The curvature of $\omega$, denoted $K$, is the $\lieg$-valued two-form on $\hm$ defined by
\begin{equation}
\label{eq.curvature}
 K(X,Y) = d\omega(X,Y) + [\omega(X), \omega(Y)],
\end{equation}
for every pair $X,Y$ of vector fields on $\hm$. Property (3) of $\omega$ implies that $K$ is semibasic. Moreover, the normalization condition on $\omega$ implies it is \emph{torsion-free}, that is, the two-form $K$ takes values in $\liep$. 

Using the parallelization $T\widehat{M} \cong \widehat{M} \times \lieg$ provided by $\omega$, we can encode $K$ as a function
$$ \kappa : \widehat{M} \rightarrow \wedge^2 (\lieg/\liep)^* \otimes \liep.$$
With respect to the $P$-representation on
$\wedge^2 (\lieg/\liep)^* \otimes \liep$ by 
$$(p.\gamma)(u,v) = \Ad(p) \gamma(\Ad(p^{-1})u, \Ad(p^{-1})v) \qquad \forall u,v \in \lieg/\liep$$ 
the curvature function $\kappa$ satisfies the equivariance property
$$ \kappa_{\hx \cdot p} = p^{-1}.\kappa_{\hx} \qquad \forall \hx \in \hm, \ p \in P$$

Vanishing of the Cartan curvature over an open subset $U \subset M$, that is, on $\pi^{-1}(U)$, is equivalent to conformal flatness of $U$. 
The Weyl $(3,1)$-tensor $W$ associated to the conformal class $[g]$ 
corresponds via $\iota_{\hat{x}}$ to composing $\kappa$ with the quotient epimorphism $\liep \rightarrow \liep / \lieg_+$:
$$ W_x(u,v,w) = \overline{\kappa}_{\hx}(\iota_{\hx}(u), \iota_{\hx}(v)) .\iota_{\hx}(w) \qquad \forall x \in M, \ u,v,w \in T_xM$$
Thanks to the equivariance properties of $\iota$ and $\kappa$, this expression is independent of the choice of $\hx$ in the fiber over $x$.

\subsubsection{Lifting conformal transformations and conformal vector fields}
\label{sec.lift}

Conformal transformations of $M$ lift naturally and uniquely to bundle automorphisms of $\widehat{M}$ leaving $\omega$ invariant; conformal vector fields similarly lift to unique $P$-invariant vector fields on $\widehat{M}$ whose Lie derivative annihilates $\omega$. Conversely, any bundle automorphism of $\hm$ preserving $\omega$ induces a conformal diffeomorphism of $M$, and similarly any right-$P$-invariant vector field on $\hm$ preserving $\omega$ induces a conformal vector field on $M$. The lifted action of $\Conf(M,[g])$ preserves the parallelization of $\widehat{M}$ determined by $\omega$, as in property (1) of $\omega$ in Theorem \ref{thm.cartan}, and is therefore free (see \cite[I.3.2]{kobayashi.transf}). Similarly, lifts of conformal vector fields to $\widehat{M}$ are nonvanishing, 
and are entirely determined by their value at a point.

Let $X$ be a conformal vector field on $M$, and lift it to $\hm$ as explained above. For any vector field $Z$ on $\hm$, 
$$ L_X \omega(Z) = X.\omega(Z) - \omega([X,Z]) = 0.$$
Together with the definition of the Cartan curvature
$$ K(X,Z) = d \omega(X,Z) + [\omega(X),\omega(Z)] = X.\omega(Z) - Z.\omega(X) - \omega([X,Z]) + [\omega(X),\omega(Z)],$$
we obtain
\begin{equation}
 \label{eq.killing.cartan}
 Z.\omega(X) = [\omega(X),\omega(Z)] - K(X,Z).
\end{equation}

For $\hat{\alpha}$ any smooth path in $\hm$, set $\xi(t) = \omega(X(\hat{\alpha}(t)))$.  Then equation \eqref{eq.killing.cartan} leads to the following first-order linear ODE for $\xi$:
\begin{equation}
 \label{eq.killing.transport}
 \xi'(t) = [\xi(t), \omega(\hat{\alpha}'(t))] - \kappa(\xi(t), \omega(\hat{\alpha}'(t))).
\end{equation}

This is the {\it Killing transport equation} along $\hat{\alpha}$.

\subsubsection{Conformal exponential map}
\label{sec.exponential}

For $Z \in \lieg$ let $\hat Z \in \mathcal{X}(\hm)$ be defined by $\omega(\hat Z) \equiv Z$. Let $\{\varphi^t_{\hat Z}\}$ denote the local flow on $\hm$ generated by $\hat Z$. At each $\hx \in \hm$, define ${\mathcal W}_\hx \subset \lieg$ as the set of $Z \in \lieg$ such that $\varphi_Z^t$ is defined for $t \in [0,1]$ at $\hx$. The \emph{exponential map at $\hx$} is
$$ \exp(\hx, \cdot) : {\mathcal W}_\hx \to \hm, \qquad \exp(\hx, Z) = \varphi_{\hat Z}^1 \cdot \hx.$$

As usual, the domain ${\mathcal W}_\hx$ is a neighborhood of $0$ for all $\hat{x}$, and the map $\xi \mapsto \exp(\hx, \xi)$ determines a diffeomorphism from a neighborhood ${\mathcal V}_\hx \subset {\mathcal W}_\hx$ of $0$ onto a neighborhood of $\hx$ in $\hm$. Moreover, for $\lieg_{-} \subset \lieg$ any subspace transverse to $\liep$, the map $\pi \circ \exp_\hx$ induces a diffeomorphism from a neighborhood of $0$ in $\lieg_{-}$ to a neighborhood of $x = \pi(\hx)$ in $M$.

Let $f$ be a conformal transformation of $M$. Then $f_*(\hat Z) = \hat Z$, and if $p \in P$, then $(R_p)_*\hat Z = \widehat{Z \cdot p}$, where $Z \cdot p = (\Ad p^{-1}) Z$. This implies the important equivariance property:
\begin{equation}
\label{eq.equivariance-exponential}
 f(\exp(\hx,\xi)) \cdot p^{-1} = \exp(f(\hx) \cdot p^{-1}, (\Ad p)\xi).
\end{equation}

\subsubsection{Development of curves and null geodesic segments}
\label{sec.lightlike-geodesic}
Recall that a \emph{pregeodesic} of a Lorentzian metric $g$ is a parametrized curve $\gamma: I \to M$ satisfying a differential equation of the form
$$
\frac{D}{dt} \dot{\gamma}(t) = f(t) \dot{\gamma}(t),
$$
for some smooth function $f: I \to \BR$.  
After a suitable reparametrization, a pregeodesic which is an immersion becomes an affinely parametrized geodesic.

A remarkable fact of pseudo-Riemannian conformal geometry is that all metrics in the same conformal class $[g]$ have the same \emph{null pregeodesics}.  A null pregeodesic is a pregeodesic $\gamma$ of any metric $g_0 \in [g]$ with $g_0(\dot{\gamma}(t), \dot{\gamma}(t)) \equiv 0$, the latter property being independent of the choice of $g_0$.  For $\gamma$ a null pregeodesic defined on $I$, the image $\gamma(I)$ will be called a \emph{null geodesic segment}.  

The Cartan connection defines a \emph{developing map}, which associates to any curve in $M$ a curve in the model space $\Ein$.  
Let $\gamma: I \to M$ be a smooth curve, and  consider a lift $\hat{\gamma}: I \to \widehat{M}$.  
Denote by $\hat{\gamma}_*$ the unique curve in $G = \PO(2,n)$ satisfying the differential equation
$$
\omega^G(\hat{\gamma}_*'(t)) = \omega(\hat{\gamma}'(t)),
$$
with initial condition $\hat{\gamma}_*(0) = 1_G$, where, as above $\omega^G$ is the Maurer-Cartan form on $G$, the Cartan connection for $\Ein$. The ODE defining  $\hat{\gamma}_*$ is linear, so the curve $\hat{\gamma}$ is defined on $I$.
Set $\gamma_*(t) := \pi_G(\hat{\gamma}_*(t))$, and call $\gamma_*$ the \emph{development} of $\gamma$.  
If $\hat{\beta}: I \to \widehat{M}$ is another lift of the same curve $\gamma$, then there exists $p \in P$ such that $\beta_* = p \cdot \gamma_*$; this is easy to check from the axioms for $\omega$.  
Thus any $P$-invariant family $\mathcal{F}$ of curves on $\Ein$ defines a distinguished class of curves on $M$, namely, those curves whose developments are in $\mathcal{F}$.

A \emph{photon} of $\Ein$ is defined as the projection onto $\Ein$ of a totally isotropic two-plane in $\BR^{2,n}$.  
The null pregeodesics are precisely those curves whose developments parametrize segments of photons (see \cite{francoeur}, Thm.~5.3.3).  
The conformal invariance of null pregeodesics asserted above can now easily be deduced. 
The following consequence of this characterization of null pregeodesics will be used repeatedly below: for any 
$\hx \in \hm$, for any $u \in \lieg$
with projection $\bar{u}$ to $\lieg/\liep$ satisfying $\mathbb{I}( \bar{u},\bar{u} )=0$, the parametrization 
$s \mapsto \pi \circ \exp(\hx,su)$ is an immersion onto a null geodesic segment in $M$.

\subsection{Algebraic features of $\mathfrak{so}(2,n)$}
\label{sec.algebraic-facts}

 Let $\mathbb{I}_{2,n}$ denote the inner product determined by the quadratic form $Q_{2,n}$, and let $\mathbb{I}_{1,n-1}$, or simply $\mathbb{I}$, be its restriction to the Minkowski subspace $e_0^\perp \cap e_{n+1}^\perp$.

The Lie algebra $\lieg = \so(2,n)$ can be parametrized as follows:
\begin{equation*}
\lieg= 
\left \{
\begin{pmatrix}
a & \xi & 0 \\
v & X & - \mathbb{I}^t \xi \\
0 & -^t v \mathbb{I} & -a
\end{pmatrix}
, \ a \in \BR, \ v \in \BR^n, \ \xi \in \BR^n, \ X \in \so(\BR^n, \mathbb{I}) \cong \so(1,n-1)
\right \}.
\end{equation*}

The \emph{grading decomposition} $\lieg = \lieg_{-1} \oplus \lieg_0 \oplus \lieg_{+1}$ is given by the subalgebras parametrized by $v$, $(a,X)$, and $\xi$, respectively; note that $\liep = \lieg_0 \ltimes \lieg_{+1}$. The Lie algebra $\lieg_0$ is reductive, isomorphic to $\BR \oplus \so(1,n-1).$
In turn $\so(1,n-1)$ can be decomposed as:
\begin{equation*}
\so(1,n-1) = 
\left \{
\begin{pmatrix}
b & U_+ & 0 \\
U_- & R & -^t U_+ \\
0 & -^t U_- & -b
\end{pmatrix}
, \ b \in \BR, \ U_- \in \BR^{n-2}, \ U_+ \in (\BR^{n-2})^*, \ R \in \so(n-2)
\right \}.
\end{equation*}

An $\BR$-split Cartan subalgebra in $\lieg$ is
\begin{equation}
\label{eqn:cartan_subalgebra}
\liea =
\left \{
\begin{pmatrix}
a & & & & \\
  & b & & & \\
  & & {\bf 0} & & \\
  & & & -b & \\
  & & & & -a
\end{pmatrix}
, \ a,b \in \BR
\right \}, 
\qquad {\bf 0} = (0, \ldots, 0) \ n-2 \ \text{times}.
\end{equation}

Let $\alpha, \beta \in \liea^*$ be defined by $\alpha(a,b)=a$ and $\beta(a,b)=b$. The following diagram represents the full restricted root-space decomposition of $\so(2,n)$ with respect to $\liea$:
\begin{equation}
\label{eqn:root_decomposition}
\begin{pmatrix}
\liea & \lieg_{\alpha - \beta} & \lieg_{\alpha} & \lieg_{\alpha+\beta} & 0 \\
\lieg_{\beta - \alpha} & \liea & \lieg_{\beta} & 0 & \lieg_{\alpha+\beta} \\
\lieg_{-\alpha} & \lieg_{-\beta} & \liem & \lieg_{\beta} & \lieg_{\alpha} \\
\lieg_{-\alpha-\beta} & 0 & \lieg_{-\beta} & \liea & \lieg_{\alpha-\beta} \\
0 & \lieg_{-\alpha-\beta} & \lieg_{-\alpha} & \lieg_{\beta-\alpha} & \liea
\end{pmatrix}
\qquad
\begin{array}{l}
\liem \cong \so(n-2) \\
\dim \lieg_{\beta} = n-2 = \dim \lieg_{\alpha} \\
\dim \lieg_{\alpha-\beta} = 1 = \dim \lieg_{\alpha+\beta}
\end{array}
\end{equation}

The factor $\liem$ is the centralizer of $\liea$ in a maximal compact subalgebra of $\so(2,n)$ and is parametrized by $R$ in the decomposition of $\so(1,n-1)$. In accordance with this decomposition, we denote the root spaces $\lieg_{-\beta}$ and $\lieg_\beta$ by $\lieu^-$ and $\lieu^+$, respectively. The corresponding unipotent subgroups of $G$ are $U^-$ and $U^+$.

For later use, we fix a basis $E_1, \ldots, E_n$ of $\lieg_{-1}$ such that $E_1 \in \lieg_{-\alpha+\beta}$, $E_n \in \lieg_{-\alpha-\beta}$, and $E_i \in \lieg_{-\alpha}$ for $2 \le i \le n-1$. We also impose the normalization $\langle E_1, E_n \rangle = 1$ and $\langle E_i, E_j \rangle = \delta_{ij}$ for $2 \le i,j \le n-1$.

The subgroup of $P$ with Lie algebra $\lieg_0$ will be denoted $G_0$.

\subsection{Gromov stratification for real-analytic structures and algebraicity of isotropy}
\label{sec.stratification}

A conformal class $[g]$ in dimension $\geq 3$ is a rigid geometric structure in the sense of Gromov \cite{gromov.rgs}. In analytic regularity, the orbits of local conformal vector fields are the fibers of a certain surjective analytic map onto a stratified analytic space. The same is true if instead of the orbits of all local conformal vector fields, one considers only those that centralize a given global analytic vector field $Y$. This amounts to saying that the enhanced geometric structure $[g] \cup \{Y\}$ is rigid and analytic and therefore the stratification result applies to it. We will denote by $\Kill^{loc}_Y(x)$ the Lie algebra of local conformal vector fields defined in a neighborhood of $x$ that centralize $Y$. In the context of pseudo-Riemannian geometry, the important consequences of Gromov's Stratification Theorem are summarized here.

\begin{theorem}[\cite{gromov.rgs}, §3]
\label{thm.stratification}
    Let $(M,[g])$ be a closed manifold of dimension $\geq 3$ with an analytic semi-Riemannian conformal structure. 
    Let $Y \in \mathcal{X}(M)$ be an analytic vector field. Then
    \begin{enumerate}
        \item For all $x \in M$, the $\Kill^{loc}_Y$-orbit $\mathcal{O}(x)$ is a locally closed, semi-analytic submanifold of $M$.
        \item For every $x$, the closure $\overline{\mathcal{O}(x)}$ is semi-analytic and locally connected.
        \item For all $y \in \overline{\mathcal{O}(x)}  \setminus \mathcal{O}(x)$, $\dim \mathcal{O}(y) < \dim \mathcal{O}(x)$.
    \end{enumerate}
\end{theorem}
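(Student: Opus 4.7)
The plan is to recognize that the conformal class $[g]$ together with the analytic vector field $Y$ forms a rigid analytic geometric structure in the sense of Gromov, whose local Killing algebra is precisely $\Kill^{loc}_Y$, and then to invoke the stratification theorem of \cite{gromov.rgs}, \S3.

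First I would package the data into an absolute parallelism on the Cartan bundle. By Theorem \ref{thm.cartan}, $[g]$ canonically determines the analytic Cartan connection $\omega: T\widehat M \to \lieg$; and by Section \ref{sec.lift}, $Y$ lifts uniquely to a $P$-invariant analytic vector field $\widehat Y$ on $\widehat M$. The pair $(\omega, \widehat Y)$ is then an analytic framing of $\widehat M$ enriched by a distinguished vector field, which is a rigid analytic structure of order zero: any local automorphism is determined by its $0$-jet at a single point. A local vector field $X$ on $M$ belongs to $\Kill^{loc}_Y$ if and only if its canonical lift $\widehat X$ to $\widehat M$ satisfies $L_{\widehat X}\omega = 0$ and $[\widehat X, \widehat Y]=0$; thus $\Kill^{loc}_Y$ is exactly the projection to $M$ of the local Killing algebra of $(\omega, \widehat Y)$.

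Second, I would invoke the theorems of \cite{gromov.rgs}, \S3, applied to this structure. Gromov associates to a rigid analytic structure an analytic map on $M$ (sending $x$ to the isomorphism class of a sufficiently high jet of the Killing algebra at $x$) whose level sets are locally closed analytic submanifolds; each $\mathcal O(x)$ is an open subset of such a level set, giving assertion (1). For (2), $\overline{\mathcal O(x)}$ is the closure of a connected component of a level set of an analytic map on a real-analytic manifold, hence semi-analytic and locally connected by classical results of Lojasiewicz. Compactness of $M$ keeps these semi-analytic sets globally tame, and also feeds into Gromov's open-dense integrability statement which guarantees that enough formal Killing jets integrate to actual local Killing fields.

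The main obstacle will be assertion (3), the strict drop of orbit dimension at frontier points. Heuristically, at $y \in \overline{\mathcal O(x)} \setminus \mathcal O(x)$ the algebra of Killing jets can only grow by upper semicontinuity, and the local isotropy in $\Kill^{loc}_Y$ at $y$ is at least as large as at nearby points $x' \in \mathcal O(x)$. Turning this into a strict inequality $\dim \mathcal O(y) < \dim \mathcal O(x)$ requires the extension-propagation step of Gromov's argument: one must show that every formal Killing jet at $y$ coming from a true local Killing field at nearby $x' \in \mathcal O(x)$ extends to a genuine analytic Killing field in a neighborhood of $y$, and that this extension still centralizes $Y$. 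This is precisely where real-analyticity of $(M, [g], Y)$ and compactness of $M$ jointly enter, and where I would expect to spend most of the technical effort.
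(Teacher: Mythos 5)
Your proposal matches the paper's treatment: the theorem is quoted from Gromov, and the paper's entire justification is exactly your first step, namely that the enhanced structure $[g]\cup\{Y\}$ is a rigid analytic geometric structure (equivalently, an enhanced Cartan geometry, i.e.\ the parallelism $\omega$ on $\widehat{M}$ together with the lift $\widehat{Y}$), so that Gromov's stratification applies with $\Kill^{loc}_Y$ playing the role of the local Killing algebra. The paper's accompanying remark even points to the same alternative route you sketch --- a Frobenius/integrability theorem for compact real-analytic enhanced Cartan geometries --- so your identification of where the technical weight lies (integrating Killing jets, assertion (3)) is consistent with what the cited references actually carry.
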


\begin{remark}
These statements can also  be deduced from several analogous  results formulated in the framework of Cartan geometries. For orbits of all local conformal vector fields, the stratification result is proved in \cite[Sec.4]{me.frobenius}. A Frobenius theorem, the key part in this theory, is also proved in \cite{pecastaing.frobenius} for enhanced Cartan geometries, which include geometric structures such as $[g] \cup \{Y\}$. The setting is in $\mathcal{C}^{\infty}$ regularity and the conclusions are valid over an open-dense subset. However, the method of \cite{me.frobenius} are easily adaptable to this context, and yield a Frobenius theorem   for compact, real-analytic, enhanced Cartan geometries, valid on the entire manifold, which is exactly what is needed here.
\end{remark}

Another useful consequence of Gromov's Frobenius Theorem is the following proposition. We formulate it in the context of Cartan geometries; it was originally stated in \cite[§3]{gromov.rgs} for rigid geometric structures.

Let $(M,[g])$ be a manifold of dimension $\geq 3$ with a semi-Riemannian conformal structure. For $x \in M$, let $\Conf^{loc}_x$ denote the group of germs at $x$ of local conformal diffeomorphisms fixing $x$. Let $(M,\hm,\omega)$ be the normalized Cartan geometry modeled on $\mbox{\bf Ein}^{p,q}$ associated to $[g]$, and let $\hx \in \hm_x$. For all $f \in \Conf^{loc}_x$, there is a unique $p \in P$ such that $\hat{f}(\hx).p^{-1} = \hx$. The correspondence $\{f \in \Conf^{loc}_x \mapsto p \in P\}$ identifies $\Conf^{loc}_x$ with a subgroup $P^{\hx} \leq P$, called the \emph{isotropy of $f$ at $x$ with respect to $\hat{x}$}. 

\begin{proposition}
    \label{prop.isotropy-algebraic}
  For $(M,[g])$ real-analytic, the group $\Ad_{\lieg}(P^{\hx})$ is an algebraic subgroup of $\Ad_{\lieg}(P)$.
\end{proposition}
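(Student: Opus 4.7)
The plan is to identify $P^{\hx}$ with the stabilizer in $P$ of a sufficiently long finite jet of the Cartan curvature $\kappa$ at $\hx$, to observe that such stabilizers are algebraic in $\Ad_\lieg(P)$, and then to invoke the real-analytic Frobenius theorem to close the loop. Recall from Section~\ref{sec.lift} that any $f \in \Conf^{loc}_x$ lifts to a local bundle automorphism $\hat f$ of $\hm$ preserving $\omega$, and the associated $p \in P$ is defined by $\hat f(\hx) = \hx \cdot p$. Because $\hat f^*\omega = \omega$, the curvature function $\kappa$ and all its iterated covariant derivatives $D^k \kappa : \hm \to V_k$ are preserved by $\hat f$. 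Here $V_k$ is the finite-dimensional $P$-module built from $\wedge^2(\lieg/\liep)^* \otimes \liep$ by tensoring with symmetric powers of $(\lieg/\liep)^*$, with $P$-action constructed from the adjoint representation of $P$ on $\lieg$. Combining $\hat f$-invariance with the equivariance $D^k\kappa(\hy \cdot p) = p^{-1}\cdot D^k\kappa(\hy)$ yields $p \cdot D^k\kappa(\hx) = D^k\kappa(\hx)$ for every $k \geq 0$.

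Set
$$S_k := \Stab_P\bigl(D^0\kappa(\hx),\ldots,D^k\kappa(\hx)\bigr) \subseteq P.$$
The $P$-action on $V_0 \oplus \cdots \oplus V_k$ is polynomial in $\Ad_\lieg(p)$, so $\Ad_\lieg(S_k)$ is Zariski-closed in $\Ad_\lieg(P)$. The descending chain $(\Ad_\lieg(S_k))_{k \geq 0}$ of algebraic subgroups of the algebraic group $\Ad_\lieg(P)$ must stabilize at some finite order $k_0$ by Noetherianity of the Zariski topology, and the previous paragraph yields $\Ad_\lieg(P^{\hx}) \subseteq \Ad_\lieg(S_{k_0})$. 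For the reverse inclusion, I would invoke the Frobenius theorem for real-analytic Cartan geometries cited in the Remark after Theorem~\ref{thm.stratification} (see \cite{me.frobenius, pecastaing.frobenius}): in the analytic category the stabilization order $k_0$ is an integrability threshold, so every $p \in S_{k_0}$ is the $P$-component at $\hx$ of a local automorphism of $(\hm, \omega)$, which in turn descends to a germ in $\Conf^{loc}_x$. Hence $\Ad_\lieg(P^{\hx}) = \Ad_\lieg(S_{k_0})$, an algebraic subgroup of $\Ad_\lieg(P)$.

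The main obstacle is the last step, the integrability: the necessary condition that $p$ stabilize all finite jets of $\kappa$ at $\hx$ must be upgraded to the existence of an actual germ of local conformal diffeomorphism fixing $x$ with $\hat f(\hx) = \hx \cdot p$. This is the content of Gromov's Frobenius-type theorem for analytic rigid structures \cite[\S 3]{gromov.rgs}, in its Cartan-geometric reformulation; real-analyticity is essential here, since in $C^\infty$ regularity the analogous result is only valid on an open-dense subset. By contrast, the first two steps are structural: the identification of $P^{\hx}$ with a jet stabilizer uses only $P$-equivariance of curvature, and the algebraicity uses only that the $P$-action on finite-dimensional jet modules factors through the algebraic representation $\Ad_\lieg$, together with Noetherianity of the Zariski topology on $\Ad_\lieg(P)$.
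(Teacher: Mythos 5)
Your proposal is correct and follows essentially the same route as the paper: the paper's proof simply cites the map $\Phi:\hm\to W$ of \cite[Section 4]{me.frobenius} and its Proposition 3.8, which together are exactly your identification of $P^{\hx}$ with the stabilizer in $P$ of a sufficiently high-order curvature jet in a finite-dimensional algebraic $P$-module, followed by the observation that $\Ad_\lieg(P)$ acts algebraically there. You have merely unpacked the content of that citation, correctly isolating the analytic Frobenius theorem as the one nontrivial input (the reverse inclusion from jet-stabilizers to actual germs).
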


\begin{proof}
    Let $\Phi : \hm \to W$ be the $P$-equivariant map defined in \cite[Sec.4]{me.frobenius}. By Proposition 3.8 of the same article, $P^{\hx}$ coincides with the stabilizer in $P$ of $\Phi(\hx) \in W$. The result follows from the algebraicity of the action of $\Ad_{\lieg}(P)$ on $W$.
\end{proof}

\section{Linear mixed singularity implies conformal flatness}
\label{sec.mixed-flat}

Having gathered the necessary elements of the Cartan-geometric viewpoint on conformal structures, we can now pursue the program outlined in Section~\ref{ss.mixed-balanced}, with the aim of proving Theorem~\ref{thm.local.lichnerowicz}.  
The main result of this section is the following:

\begin{proposition}
\label{prop.mixed}
Let $(M,g)$ be a closed, real-analytic Lorentzian manifold. Let $Y$ be a conformal vector field on $M$. If $Y$ admits a singularity of mixed type, then $(M,g)$ is conformally flat. 
\end{proposition}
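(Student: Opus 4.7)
The goal is to produce a contracting (or expanding) singularity of $Y$ on $M$ starting from the mixed-type singularity $p$, since Corollary~\ref{coro.contraction} then forces conformal flatness. The proof exploits the compactness of $M$ together with the strict dominance $|b|>|a|>0$ of the linearization at $p$. After replacing $Y$ by $-Y$, linearizing coordinates around $p$ give $D_p\varphi_Y^t=\diag(e^{(a+b)t},\, e^{at}R^t,\, e^{(a-b)t})$ with $a<0<b$ and $b>|a|$, so a single null expanding direction $E_1$ (eigenvalue $a+b>0$) and an $(n-1)$-dimensional strongly stable manifold. The local unstable manifold $W^u(p)$ is tangent to the null vector $E_1$, and by Theorem~\ref{thm.dev-lightlike} it lies on a unique maximal null geodesic $\gamma \subset M$, on which $\varphi_Y^t$ acts (locally near $p$) by reparametrization with expansion factor $e^{(a+b)t}$.

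\textbf{Accumulation and holonomy.} Pick $q\in\gamma$ close to $p$ with $q\neq p$. The forward orbit $\{\varphi_Y^t(q)\}_{t\geq 0}$ parametrizes a full ray of the null geodesic $\gamma$; by compactness, it accumulates at some $q_\infty\in M$ along times $t_n\to +\infty$. Lifting to the Cartan bundle, fix $\hq$ over $q$ and extract $\hp_n\in P$ such that $\hat{\varphi}_Y^{t_n}(\hq)\cdot\hp_n^{-1}\to\hq_\infty$ in $\hm$ over $q_\infty$. The sequence $\hp_n$ must diverge in $P$: otherwise a bounded subsequence would yield a smooth identification of a neighborhood of $\hq$ with one of $\hq_\infty$ intertwining the flow, contradicting the unbounded expansion $\varphi_Y^t\cdot\gamma(s)=\gamma(e^{(a+b)t}s)$ on $\gamma$. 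The type of divergence of $\hp_n$ is controlled by the Cartan decomposition of $\hat{\varphi}_Y^{t_n}$ relative to the hyperbolic element $Z_p$ encoding the linearization at $p$.

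\textbf{Extracting a contracting singularity.} The strict inequality $b>|a|$ means that, in the root-space decomposition (\ref{eqn:root_decomposition}) of $\lieg=\so(2,n)$, a single real root of $Z_p$ strictly dominates all others. One shows that $\hp_n$ consequently admits a Cartan decomposition whose $A$-component escapes to infinity along the corresponding Weyl chamber direction. If $q_\infty$ happens to be a singularity of $Y$, this dominance together with the conformal constraint on spectra forces the linearization $D_{q_\infty}\varphi_Y^t$ to have only eigenvalues of modulus $<1$ for large $t>0$, i.e. $q_\infty$ is a contracting singularity. If instead $Y(q_\infty)\neq 0$, then by Gromov's stratification (Theorem~\ref{thm.stratification}) applied to the analytic rigid structure $[g]\cup\{Y\}$, $\dim\mathcal{O}(q_\infty)<\dim\mathcal{O}(q)$, and Proposition~\ref{prop.isotropy-algebraic} (algebraic closedness of isotropy) together with the known algebraic limit of $\hp_n$ produces a nonzero local conformal vector field $Z$ defined near $q_\infty$, centralizing $Y$, with $Z(q_\infty)=0$ and with linearization inheriting the dominant $(a+b)$-eigenvalue; the same spectral argument then shows $q_\infty$ is a contracting singularity for $Z$, to which Corollary~\ref{coro.contraction} still applies.

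\textbf{Main obstacle.} The technical heart of the proof is the algebraic analysis at Step~3: controlling the Cartan $AK$-decomposition of the holonomy sequence $\hp_n$, ruling out purely parabolic (null-root) degenerations, and concluding that the limiting linearization is genuinely loxodromic-contracting rather than of mixed or balanced type. Here the condition $b>|a|$ is used in an essential way—it is precisely what distinguishes the mixed case from the balanced case $|b|=|a|$ (treated later in the paper via polarizations), because it guarantees that the dominant root of $Z_p$ is strictly separated from the remaining spectrum, allowing the limit dynamics to be analyzed by a single Weyl chamber direction rather than by subtler, parabolic, codimension-one behavior.
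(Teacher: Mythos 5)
Your overall strategy --- manufacture a contracting/expanding singularity from the mixed one and then invoke Corollary \ref{coro.contraction} --- is exactly the paper's, and your observation that the one-dimensional invariant manifold at $p$ is a null geodesic on which $\varphi^t_Y$ acts by reparametrization is the correct starting point. But the two steps that actually carry the proof are either missing or replaced by assertions that do not work as stated. First, compactness only gives an accumulation point $q_\infty$ of the orbit; it does not give that the orbit closure is $\{p\}\cup(\text{orbit})\cup\{q_\infty\}$, nor that this whole set lies inside a single immersed null geodesic segment. The paper obtains the first fact from Gromov's stratification theorem (Theorem \ref{thm.stratification}) applied to the one-dimensional $\Kill^{loc}_Y$-orbit --- which in particular forces $Y(q_\infty)=0$, so your second branch ``$Y(q_\infty)\neq 0$'' is vacuous once the stratification is used correctly --- and the second fact from an explicit exponential-chart argument near the limit point (Lemma \ref{lem.closure} and the discussion of $\alpha_{\leq -T}\cup\{x_1\}$). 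Without a geodesic segment containing \emph{both} endpoints in its interior, there is no structure along which to transport the isotropy data.

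Second, and more seriously, your ``main obstacle'' paragraph is where the proof actually lives, and the tool you propose there (a Cartan decomposition analysis of a divergent holonomy sequence $\hat{p}_n$, plus a ``conformal constraint on spectra'') is neither the right mechanism nor carried out. The paper's Proposition \ref{prop.conjugated} settles this with an exact computation and no asymptotic analysis: because $Y$ is tangent to the null geodesic, $\xi(t)=\omega(\hat{Y}(\hat{\gamma}(t)))$ differs from a multiple of $\omega(\hat{\gamma}'(t))$ by an element of $\liep$, so the curvature term in the Killing transport equation vanishes identically and $\xi$ satisfies the \emph{flat-model} ODE. Developing the geodesic into $\Ein$ and conjugating by the reflection exchanging the two fixed points of the loxodromic element shows that the isotropy at the far endpoint is $\Ad(p_1)\cdot\diag(b,a,R,-a,-b)$: the conformal distortion $a$ and the boost $b$ are \emph{swapped}, and $|b|>|a|$ is precisely what makes the new singularity contracting/expanding. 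Your heuristic that ``dominance forces all eigenvalues of modulus $<1$'' does not identify which root becomes the conformal distortion at the limit point, and once the transport is computed exactly there is no parabolic-degeneration case to rule out. Prove the transport lemma (Proposition \ref{prop.conjugated}) first; the rest of your outline then goes through essentially as you wrote it.
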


Recall the form of the linearization of a mixed-type singularity in (\ref{eqn.loxo.diffl}), with $|b|>|a|>0.$
Replacing $Y$ by $-Y$ if necessary, so that $a < 0$, the dynamics of $\{ \varphi^t_Y \}$ near the singularity $x_0$ exhibit a one-dimensional unstable manifold through $x_0$ and a codimension-one strongly stable manifold through $x_0$.  The singularity $x_0$ is locally isolated.

\subsection{A general result about zeros of conformal vector fields}
\label{ss.conjugated-zeros}
The idea behind the proof of Proposition~\ref{prop.mixed} is to show that the existence of a mixed-type singularity necessarily entails the existence of contracting singularities, which in turn implies conformal flatness.  Such a proof appears for $M$ simply connected in \cite[Sec 5.2]{mp.confdambra}.
The following proposition is a generalization of that result:

\begin{proposition}
\label{prop.conjugated}
Let $(M,g)$ be a smooth Lorentzian manifold. Let $\gamma: (-\epsilon,1+\epsilon) \to M$, with $\epsilon>0$, be a smooth immersion parametrizing a null geodesic segment. Let $Y$ be a conformal vector field vanishing at $\gamma(0)$ and $\gamma(1)$, such that $Y(\gamma(t))$ is collinear with $\gamma'(t)$ for all $t$. If $\gamma(0)$ is a singularity of mixed type for $Y$, then there exists $0 < t_0 \leq 1$ such that $\gamma(t_0)$ is a contracting singularity of $Y$ or of $-Y$.
\end{proposition}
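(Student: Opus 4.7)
I would transport the problem to the Cartan bundle $\widehat M$ and reduce it to an algebraic statement in $\mathfrak{so}(2,n)$. By Theorem~\ref{thm.dev-lightlike}, after an affine reparametrization of $\gamma$, there is a lift $\hat\gamma$ in $\widehat M$ such that $\omega(\hat\gamma'(t))$ equals a constant null vector $E \in \mathfrak g_{-1}$. Set $\xi(t):=\omega(\hat Y_{\hat\gamma(t)}) \in \mathfrak g$; the Killing transport equation \eqref{eq.killing.transport} becomes
$$ \xi'(t) = [\xi(t), E] - \kappa_{\hat\gamma(t)}(\bar\xi(t), E), $$
where $\bar\xi(t)$ denotes the class of $\xi(t)$ in $\mathfrak g/\mathfrak p$. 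The hypothesis that $Y$ is collinear to $\gamma'$ along $\gamma$ says precisely that $\bar\xi(t) \in \mathbb R\bar E$, so by antisymmetry of $\kappa$ the curvature term vanishes. Since $\operatorname{ad}(E)$ is nilpotent of order three, one gets the closed-form polynomial solution $\xi(t) = \mathrm{Ad}(\exp(-tE))\xi(0)$ of degree at most two in $t$.

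\textbf{Algebraic core.} Write $\xi(0) = X_0 + X_1 \in \mathfrak g_0 \oplus \mathfrak g_1 = \mathfrak p$. The condition $Y(\gamma(1)) = 0$ is equivalent to the vanishing of the $\mathfrak g_{-1}$-component of $\xi(1)$, which amounts to
$$ [E,X_0] = \tfrac12\,[E,[E,X_1]], $$
and the collinearity of $Y$ with $\gamma'$ at every $t$ forces both sides to lie in $\mathbb R E$. Normalizing via a $P$-conjugation so that $E = E_1 \in \mathfrak g_{-\alpha+\beta}$ and the semisimple Cartan part of $X_0$ lies in $\mathfrak a$, write $X_0|_{\mathfrak a} = r H_\alpha + \tilde b H_\beta$. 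A root-space analysis eliminates the $\mathfrak g_{-\beta}$-component of $X_0$ and, using the $\mathfrak{sl}_2$-triple associated to $E_1$ and its dual $\tilde E_1 \in \mathfrak g_{\alpha-\beta}$ (with $[E_1,\tilde E_1] = -H_\alpha + H_\beta$), pins the $\mathfrak g_{\alpha-\beta}$-component of $X_1$ to $(\tilde b - r)\tilde E_1$. A direct bracket computation then shows that the $\mathfrak a$-part of the $\mathfrak g_0$-component of $\xi(1) = \xi(0) - [E,X_1] + (\text{lower})$ is
$$ H' = \tilde b H_\alpha + r H_\beta, $$
that is, the conformal distortion $r$ and the loxodromic parameter $\tilde b$ are \emph{swapped} between $\gamma(0)$ and $\gamma(1)$.

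\textbf{Conclusion.} By the taxonomy of Section~\ref{ss.taxonomy}, the mixed hypothesis at $\gamma(0)$ reads $|\tilde b|>|r|>0$. At $\gamma(1)$ the new distortion is $\tilde b$ and the new loxodromic parameter is $r$, so the same inequality now becomes $|r|<|\tilde b|$, which is precisely the contracting/expanding condition coming from \eqref{eqn.loxo.diffl}. The scalar $\mu(t)$ defined by $Y_{\gamma(t)} = \mu(t)\gamma'(t)$ coincides with the coefficient of $E$ in $\xi(t)_{\mathfrak g_{-1}}$, equal to $\tfrac{c}{2}t(t-1)$ with $c = 2(\tilde b - r) \neq 0$; hence $Y$ has no further zero on $\gamma$, and one may take $t_0 = 1$.

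\textbf{Main obstacle.} The technical difficulty is to verify that the ancillary summands---a rotation $R \in \mathfrak m$ and a $\mathfrak g_\beta$-unipotent part of $X_0$, together with the $\mathfrak g_\alpha$ and $\mathfrak g_{\alpha+\beta}$ components of $X_1$---do not spoil the semisimple type at $\gamma(1)$ through unexpected nilpotent interference. Because the new $\beta$-weight of $H'$ is $r \neq 0$, any residual nilpotent part at $\gamma(1)$ sits in a positive root space that can be absorbed by conjugation, leaving the signs of the real parts of the eigenvalues on $\mathfrak g_{-1}$ determined by $H' + R$ alone, and hence contracting or expanding as claimed.
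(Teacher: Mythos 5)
Your algebraic core is sound and, once unwound, is exactly the paper's computation: the swap $(a,b)\mapsto(b,a)$ of the distortion and the loxodromic parameter is what the paper obtains as $\Ad(r_1^{-1})\cdot\diag(a,b,R,-b,-a)=\diag(b,a,R,-a,-b)$, and your observation that the residual nilpotent summands cannot affect the contracting/expanding verdict is correct (indeed it is not even needed, since that property is defined by $\lim_{t\to+\infty}|D_p\varphi^t|=0$ and hence depends only on the spectrum of the $\lieg_0$-component). The gap is in your very first step: the claim that, after an affine reparametrization, there is a lift $\hat\gamma$ with $\omega(\hat\gamma'(t))\equiv E$ constant \emph{on an interval containing both $t=0$ and $t=1$}. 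Theorem \ref{thm.dev-lightlike} does not give this. The exponential parametrizations $s\mapsto\pi\circ\exp(\hy_0,sE_1)$ of a fixed null geodesic form a projective (not affine) class of parameters, and for the natural choice of lift over $\gamma(0)$ --- the one with $\xi(0)=\diag(a,b,R,-b,-a)$ purely in $\lieg_0$, i.e.\ $X_1=0$ --- your own formula gives $\mu(s)=(b-a)s$, which never vanishes again: the second zero of $Y$ sits at \emph{infinite} exponential parameter (in the model, $[e_1]$ is the point at infinity of the chart $s\mapsto[e_0+se_1]$ of the photon). To make $\gamma(1)$ reachable at finite parameter you must pre-compose the lift with a suitable $\exp(Z_1)$, $Z_1\in\lieg_{+1}$, which is precisely what forces your nonzero $(X_1)_{\alpha-\beta}$; but the existence of one such normalization valid over all of $[0,1]$ is equivalent to the developed curve $\gamma_*|_{[0,1]}$ being an injective arc of the photon, which need not hold: $\gamma_*$ is merely an immersion into a circle and may wind. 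Consequently your stronger conclusion $t_0=1$ (``$Y$ has no further zero on $\gamma$'') is an artifact of the assumed normalization and can fail; the zeros of $Y$ along $\gamma$ occur each time $\gamma_*$ meets $\{[e_0],[e_1]\}$, and $t=1$ may be the $k$-th such time with $k\ge 2$.

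The argument can be repaired, and the repair is essentially the paper's proof. One should first define $t_0$ as the \emph{first} zero of $Y$ after $0$; up to $t_0$ the developed curve stays in one component of $\Delta\setminus\{[e_0],[e_1]\}$, hence is injective, and only then is an affine normalization (or, more directly, the comparison with the model) available. The paper sidesteps the normalization entirely: it develops an \emph{arbitrary} lift of the given parametrization, notes that $\xi$ and its model counterpart $\xi_*$ satisfy the same linear ODE with the same initial value (the curvature term dying by your antisymmetry argument), and then reads off the isotropy at $\gamma(t_0)$ from the right-invariant field $\hat Y_*$ at the Weyl-group representative $r_1$. I would recommend restructuring your proof around the first zero $t_0\le 1$ and either supplying the injectivity-of-the-developed-arc argument to justify the affine chart on $[0,t_0]$, or replacing the normalization by the development/transport comparison.
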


\begin{proof}
The proof will use the notation and basic facts from Sections \ref{sec.cartan} and \ref{sec.algebraic-facts}. 

Let $(\widehat{M},\omega)$ be the normal Cartan geometry corresponding to $(M,[g])$. 
Because the singularity $\gamma(0)$ of $Y$ is isolated, there exists $0 < t_0 \leq 1$ such that $Y$ vanishes at $\gamma(t_0)$, but $Y(\gamma(t)) \neq 0$ for $t \in (0,t_0)$. 

Lift $Y$ to a vector field $\hat{Y}$ on $\widehat{M}$ satisfying $L_{\hat{Y}} \omega = 0$.  
There is $\hat{x}_0 \in \widehat{M}_{\gamma(0)}$ such that
\[
\omega(\hat{Y}(\hat{x}_0)) = \operatorname{diag}(a,b,R,-b,-a) =: D,
\]
with $b>|a|>0$ and $R \in \mathfrak{o}(n-2)$.
   Now lift $\gamma$ to an immersion $\hat{\gamma}: (-\epsilon,1+\epsilon) \to \widehat{M}$ such that $\hat{\gamma}(0) = \hat{x}_0.$ 
   
Denote $\xi(t) := \omega(\hat{Y}(\hat{\gamma}(t)))$.  
The hypothesis that $Y$ is collinear with $\gamma'$ means that
\[
\xi(t) = \lambda(t)\,\omega(\hat{\gamma}'(t)) + v(t),
\]
where $\lambda: (-\epsilon,1+\epsilon) \to \BR$ and $v(t) \in \mathfrak{p}$ for all $t \in (-\epsilon,1+\epsilon)$. Evaluating the Killing transport equation (\ref{eq.killing.transport}) and using that $\kappa$ is antisymmetric and semibasic 
gives:
\begin{equation}
 \label{eq.killing.transport.plat}
 \xi'(t)= [\xi(t),\omega(\hat{\gamma}'(t))].
\end{equation}
Let $\hat{\gamma}_*$ denote the development of $\hat{\gamma}$ in $\operatorname{PO}(2,n)$ (see Section \ref{sec.lightlike-geodesic}), satisfying
\[
\omega^G(\hat{\gamma}_*'(t)) = \omega(\hat{\gamma}'(t)) \quad \text{for all } t \in (-\epsilon,1+\epsilon),
\]
with $\hat{\gamma}_*(0)=1_G$. Since $\gamma$ is assumed to be an immersion, the vector $\hat{\gamma}'(t)$ is never vertical. The same is therefore true for $\hat{\gamma}_*'(t)$, and $\gamma_* := \pi_G \circ \hat{\gamma}_*$ is also an immersion in the flat model $(\PO(2,n),\omega^G)$. 
Now $t \mapsto \gamma_*(t)$ parametrizes a segment of a photon $\Delta \subset \Ein$ (see Section \ref{sec.lightlike-geodesic}), and $\Delta$ contains $\gamma_*(0)=[e_0]$.

Let ${\hat Y}_*$ be the right-invariant vector field on $\operatorname{PO}(2,n)$ with ${\hat Y}_*(1_G)=D$. Because $L_{{\hat Y}_*}\omega^G=0$ and the Cartan curvature $\kappa^G \equiv 0$, the function
\[
\xi_*(t) := \omega^G(Y_*(\hat{\gamma}_*(t)))
\]
satisfies 
\[
\xi_*'(t) = [\xi_*(t),\omega^G(\hat{\gamma}_*'(t))] = [\xi_*(t),\omega(\hat{\gamma}'(t))].
\]
Thus $\xi$ and $\xi_*$ satisfy the same first-order ODE and have the same value at $t=0$, so $\xi(t)=\xi_*(t)$ for all $t \in (-\epsilon,1+\epsilon)$. Let $Y_*$ be the projection of ${\hat Y}$ to $\Ein$. Now $Y_*(\gamma_*(t))$ is tangent to $\gamma_*'(t)$ for all $t \in (-\epsilon,1+\epsilon)$, and $Y_*(\gamma_*(t))=0$ if and only if $Y(\gamma(t))=0$. In particular $Y_*(\gamma_*(t))\neq 0$ for $t \in (0,t_0)$, and $Y_*(\gamma_*(t_0))=0$. The nature of the singularity $\gamma(t_0)$ corresponds to the $P$-conjugacy class of $\xi(t_0)$, which equals that of $\xi_*(t_0)$.

Observe that there are only two photons in $\Ein$ containing $[e_0]$ which are invariant under the matrix $D$, namely $\Delta_1=[\operatorname{span}(e_0,e_1)]$ and $\Delta_n=[\operatorname{span}(e_0,e_n)]$. Hence $\Delta$ equals either $\Delta_1$ or $\Delta_n$. Consider the case $\Delta=\Delta_1$. The vector field $Y_*$ has exactly two singularities on $\Delta_1$, namely $[e_0]$ and $[e_1]$, so $\gamma_*(t_0)$ equals $[e_0]$ or $[e_1]$. Since $Y_*(\gamma_*(t)) \neq 0$ for $t \in (0,t_0)$ and $\gamma_*$ is an immersion, $\gamma_*(t_0)=[e_1]$. 

Let
$$r_1 =
\left(
\begin{array}{ccccc}
0 & 1 &  &   &   \\
1 & 0 &  &  &  \\
  &  &  \mbox{Id}_{n-2} &  &  \\
  &  &   & 0 & 1 \\
  &  &   & 1 & 0 
  \end{array}
  \right)
\in \operatorname{PO}(2,n)$$ 

As $\pi_G(r_1)=[e_1]$, there exists $p_1 \in P$ such that $\gamma_*(t_0)\cdot p_1 = r_1$. As a consequence,
\[
\omega^G({\hat Y}_*(\hat{\gamma}_*(t_0))) = \left( \operatorname{Ad}(p_1)\circ \omega^G \right) ({\hat Y}_*(r_1)).
\]
By right-invariance of $\hat{Y}_*$ and equivariance of $\omega^G$, 
\[
\omega^G({\hat Y}_*(r_1)) = \omega^G(R_{r_1*}(\hat{Y}_*(1_G))) = \operatorname{Ad}(r_1^{-1}) (  D) = \operatorname{diag}(b,a,R,-a,-b).
\]
Hence
\[
\omega(\hat{Y}(\gamma(t_0))) = \operatorname{Ad}(p_1)\cdot\operatorname{diag}(b,a,R,-a,-b).
\]
Since $\operatorname{diag}(b,a,R,-a,-b) \in \mathfrak{g}_0$, the field $Y$ is linearizable at $\gamma(t_0)$. Because $b>|a|$, the singularity $\gamma(t_0)$ is contracting as claimed.

When $\Delta=\Delta_n$, we proceed along the same lines, replacing $r_1$ with $$r_n = 
\left(
\begin{array}{ccccc}
 &  &  & 1  & 0  \\
 &  &  & 0  & 1 \\
  &  &  \mbox{Id}_{n-2} &  &  \\
1  & 0 &   &  &  \\
0  & 1 &   &  &  
  \end{array}
  \right)
\in \operatorname{PO}(2,n)
$$
In this case one obtains
\[
\omega(Y(\gamma(t_0))) = \operatorname{Ad}(p_n)\cdot \operatorname{diag}(-b,-a,R,a,b),
\]
 for some $p_n \in P$, which proves that   the singularity $\gamma(t_0)$ is expanding, since $-b<0$ and $b>|a|$. It is thus contracting for $-Y$.
\end{proof}

\subsection{Proof of Proposition \ref{prop.mixed}}
Let $x_0$ be the mixed-type singularity of $Y$. Replacing $Y$ with $-Y$ if necessary, there exists a frame $(E_1, \ldots, E_n)$ in which the differential $D_{x_0} \varphi^t_Y$ takes the form given by (\ref{eqn.loxo.diffl}) with $b > a > 0$, and $\{R^t\} < \operatorname{SO}(n-2)$. Let $U$ be a neighborhood of $x_0$ and $g_0 \in [\left. g \right|_U]$ be as in Lemma~\ref{lem.linearizable-homothetic}, so that $\{\varphi^t_Y \}$ acts by homothetic transformations of $g_0$.
A local parametrization of the stable manifold through $x_0$ is given by $s \mapsto \exp_{{x}_0}(sE_n)$, where the exponential map is that of the metric $g_0$.  
The local stable manifold is a \emph{null geodesic segment} (see Subsection \ref{sec.lightlike-geodesic}).  

Let $s > 0$ be small enough that $x_{s} := \exp_{x_0}(s E_n)$ is defined, and introduce
$$
\alpha := \left\{ \varphi^t_Y \cdot x_{s} \;\middle|\; t \in \BR \right\} \subseteq \Delta
$$
Now consider the enhanced geometric structure $[g] \cup \{Y\}$, as in Section \ref{sec.stratification}.  Any conformal vector field defined locally around $x_0$ and commuting with $Y$ must vanish at $x_0$ and be tangent to the local stable manifold of $\{ \varphi^t_Y \}.$
Thus the null geodesic segment $\alpha$ coincides with the $\Kill^{loc}_Y$-orbit of $x_{s}$.  
Since this $\Kill^{loc}_Y$-orbit is one-dimensional, Theorem~\ref{thm.stratification} gives a clear picture of the closure $\overline{\alpha}$ of $\alpha$ in $M$. We refer the reader to the proof of Lemma~5.4 in \cite{mp.confdambra} to see how Theorem~\ref{thm.stratification} leads to:

\begin{lemma}
\label{lem.orbit-closure}
There exists a point $x_1 \in M$ such that:
$$
\overline{\alpha} = \{x_0\} \cup \alpha \cup \{x_1\}.
$$
and $\lim_{t \to -\infty} \varphi^t_Y \cdot x_{s} = x_1$.
\end{lemma}

Note that $x_1$ is a singularity of $Y$, and that \emph{a priori} it may coincide with $x_0$.  
The next goal is to find an immersion  
$\gamma: (-\epsilon, 1 + \epsilon) \to M$, for $\epsilon > 0$, parametrizing a null geodesic segment with 
$\gamma(0) = x_0$, $\gamma(1) = x_1$, and  
$\gamma([0,1]) = \{x_0\} \cup \alpha \cup \{x_1\}$.  
For $T \in \BR$, introduce the set:
$$
\alpha_{\geq T} := \left\{ \varphi^t_Y \cdot \exp_{x_0}(s_0 E_n) \;\middle|\; t \geq T \right\},
$$
and
similarly for 
$\alpha_{\leq T}.$
For large $T > 0$, the set  
$\{x_0\} \cup \alpha_{\geq T}$ is an interval of a null geodesic segment.
 
To prove the same for $\alpha_{\leq -T} \cup \{x_1\}$ 
fix a metric $g \in [g]$, as well as an auxiliary Riemannian metric $h$ on $M$, which we will use to define metric balls, either in tangent spaces $T_zM$ with respect to $h_z$, or in $M$ with respect to the Riemannian distance $d_h$.  
There exist constants $R > 0$ and $r > 0$, and a neighborhood $V$ of $x_1$, such that if $z \in V$, the exponential map $\exp_z$ with respect to $g$ is defined and injective on $B(0_z, R)$, 
and $\exp_z(B(0_z, R))$ contains the closed ball $\overline{B}(z, r)$.
Let $z_T := \varphi^{-T} (x_{s})$.  
For large $T > 0$, we have $z_T \in U$ and  
$\alpha_{\leq -T} \cup \{x_1\} \subset B(z_T, r)$.  
Let $u$ be a tangent vector to $\alpha$ at $z_T$, of unit $h$-norm.  
Then the map
\[
s \mapsto \exp_{z_T}(s u), \quad s \in (-R, R),
\]
where the exponential is again taken with respect to $g$, is an immersion that parametrizes a null geodesic segment containing  
$\alpha_{\leq -T} \cup \{x_1\}$ in its interior.

Proposition~\ref{prop.conjugated} now says that the field $Y$, or $-Y$, must admit another singularity of contracting type.  
The conformal flatness of $(M, [g])$ follows from Corollary~\ref{coro.contraction}.

\section{Linear balanced singularity implies local gravitational pp-wave metric in conformal class}
\label{sec.pp-wave}

This is the first of two sections treating balanced singularities of a conformal Lorentzian vector field (as defined in Subsection \ref{ss.taxonomy}).  The main result of this section, Theorem~\ref{thm.pp-wave}, gives local existence of very special metrics in the conformal class.  It is valid for non-compact real-analytic manifolds, and is of independent interest; thus \emph{$M$ is not assumed compact for the duration of this section}.  Such metrics have been obtained before in similar situations---see, for example, \cite{frances.degenerescence}, \cite[Sec 4]{mp.confdambra}, and \cite{mn.tractors}.  We provide a self-contained proof here of the precise and detailed statement that will be used to complete our proof of Theorem~\ref{thm.local.lichnerowicz}
(which will require compactness, as explained in Section \ref{sec.intro} above).

For $Y$ a conformal vector field vanishing at $x_0$, which is linearizable in a neighborhood $U$ of $x_0$ and of balanced type, the form of the linearization is given by (\ref{eqn.loxo.diffl}), with $|b|=|a| \neq 0$.
Conjugating in $\mbox{CO}(1,n-1)$ and multiplying $Y$ by a constant if necessary, we can assume $a = b = 1$.
Observe that although $Y$ may not be complete, $\varphi^t_Y$ is \emph{defined for all times $t \geq 0$} in $U$. The semi-group $\{ \varphi^t_Y \}_{t \geq 0}$ has a one-dimensional manifold of fixed points in $U$ and a transverse foliation by codimension-one strongly stable manifolds.  In particular, for every $x \in U$, $\lim_{t \to +\infty} \varphi^t_Y(x)$ exists and belongs to the fixed set in $U$. 

A \emph{Brinkmann metric} is a Lorentzian  metric  admitting a parallel lightlike vector field $X$. Consequently, the orthogonal distribution $X^{\perp}$ is invariant by the Levi-Civita connection of the Lorentz metric and therefore integrable. In the case when the foliation defined by $X^{\perp}$ has flat leaves with respect to the restriction of the Levi-Civita connection, the Brinkmann metric is a 
{\it pp-wave metric}. When a pp-wave metric is moreover Ricci-flat, it is called a {\it gravitational pp-wave}.
The notion of {\it polarization} for a Lorentzian manifold $(M,g)$ of dimension $\geq 4$ is defined with respect to the Weyl tensor $W$ on $M$. Let ${\mathcal D}$ be a lightlike one-dimensional smooth distribution on $M$. Then $M$ is \emph{polarized with respect to ${\mathcal D}$} if $W({\mathcal D}^{\perp},{\mathcal D}^{\perp},{\mathcal D}^{\perp})=0$ and $\operatorname{Im}W \subset {\mathcal D}^{\perp}$. 

Let $Q < P$ denote the stabilizer of the line $\BR E_1 \subset \lieg_{-1}$ under the adjoint action.  In the notation of (\ref{eqn:root_decomposition}) above,  the Lie algebra $\lieq$ of $Q$ is $\lieq=\liea \oplus \liem \oplus \lieg_{\beta} \oplus \lieg_{\alpha + \beta}$. The aim of this section is to prove the following result:

\begin{theorem}
\label{thm.pp-wave}
Let $(M,g)$ be a real-analytic Lorentzian manifold of dimension $\geq 3$. 
Assume that $Y$ is a conformal vector field,  admitting a linearizable singularity $x_0$ which is balanced. Then
\begin{enumerate}
    \item If the dimension of $M$ is $3$, then $(M,g)$ is conformally flat.
    \item In any dimension $\geq 4$, there exists a neighborhood $V$ of $x_0$ such that the conformal class $[\left. g \right|_V]$ contains a gravitational pp-wave metric $g_0$, for which $Y$ is a homothetic vector field.
    \item If moreover $(M,g)$ is not conformally flat (in particular, $M$ is of dimension $\geq 4$), there is a unique $\operatorname{Conf}^{loc}(M)$-invariant, analytic lightlike line field $\mathcal{D}$ on $M$, with the following properties:
    \begin{enumerate}
    \item The distribution $\mathcal{D}^{\perp}$ is integrable, and $Y$ is tangent to the leaves of $\mathcal{D}^{\perp}$. Moreover, at  $x_0$, the direction $\mathcal{D}$ is transverse to the local one-dimensional singular locus of $Y$.
        \item $(M,g)$ is polarized with respect to $\mathcal{D}$.
        \item In a neighborhood of each point of $M$, there is nonvanishing section $X$ of $\mathcal{D}$ and a local gravitational pp-wave metric for which $X$ is parallel and is uniquely determined by this property up to scalar multiplication. This class of local conformal vector fields determines an analytic, $\Conf^{loc}(M)$-invariant, global reduction of $\hm$ to $Q$.
        \item Up to multiplying $Y$ by a scalar, for every local conformal vector field $X$ as in (c), $[Y,X]=2X$.       
    \end{enumerate}
\end{enumerate}
\end{theorem}

\begin{remark}
    As the proof will show, point (2) does not require analyticity of $(M,g)$, hence is valid for Lorentzian metrics with $\mathcal{C}^{\infty}$ regularity.
\end{remark}

Point (1) of the proposition, dealing with the three-dimensional case, follows quickly from previous results.  Consider the linearizing coordinate system $(x_1,x_2,x_3)$ for $U$. For every $x \in U$, the matrix of $D_x \varphi^t_Y$, $t \geq 0$, relatively to the frame field $(\frac{\partial}{\partial x_1}, \frac{\partial}{\partial x_2}, \frac{\partial}{\partial x_3})$ is $\operatorname{diag}(e^{-2t},e^{-t},1)$.
It follows that, given $t_k \to + \infty$, the sequence $\{ \varphi^{t_k}_Y \}$ is stable at $x$, in the sense of \cite[Def. 3]{frances.ccvf}. Now \cite[Prop. 5]{frances.ccvf} yields that the Cotton tensor vanishes on $U$. By analyticity, $(M,g)$ is conformally flat. Point (1) is proved.

\subsection{Parallel submanifolds in the Cartan bundle and special metrics in the conformal class}
\label{sec.parallel}
As above, let $(\hm,\omega)$ be the normal Cartan geometry modeled on $\Ein$ associated to the conformal Lorentzian structure $(M,[g])$ as given by Theorem \ref{thm.cartan}.  The notation of Section \ref{sec.cartan} and Section \ref{sec.algebraic-facts} will again be used below.

Given a vector subspace $\mathfrak{h} \subset \mathfrak{g}$, a submanifold $\hat{\Sigma} \subset \hm$ is \emph{parallel with respect to $\mathfrak{h}$} when $\hat{\Sigma}$ is an integral leaf of the distribution $\omega^{-1}(\mathfrak{h})$.
Equivalently, for every $\hat{x} \in \hat{\Sigma}$, one has $\omega_{\hat{x}}(T_{\hat{x}} \hat{\Sigma}) = \mathfrak{h}$.
The most interesting situation occurs when $\mathfrak{h}$ is a Lie subalgebra of $\mathfrak{g}$, for in that case  $\omega$ induces a Cartan subgeometry on $\hat{\Sigma}$.
The presence of parallel submanifolds of dimension $>1$ often reflects special geometric properties of the conformal structure $(M,[g])$. For instance, if, for a conformal structure, the distribution $\omega^{-1}(\mathfrak{\lieg}_{-1})$ admits an integral leaf through $\hat{x} \in \hm$, then projecting this leaf yields a conformally flat neighborhood of $x=\pi(\hx)$.  Similarly, an integral leaf of $\omega^{-1}(\lieg_{-1} \oplus \lieg_0)$ yields a local Ricci-flat metric in the conformal class around $x$.
This fact and further refinements of it are recorded in the following:

\begin{proposition}
\label{prop.brinkmann}
Let $(M,[g])$ be a smooth conformal Lorentzian manifold. 
\begin{enumerate}
    \item Assume that there exists an integral leaf $\hat{\Sigma} \subset \hm$ of the distribution $\omega^{-1}(\lieg_{-1} \oplus \lieu^+)$, and let  $\hat{x} \in \hat{\Sigma}$. Then there exists, in a neighborhood $V$ containing $x=\pi(\hat{x})$,  a Ricci-flat Brinkmann metric in the conformal class $[\left. g \right|_V]$ with parallel lightlike vector field equal to the projection of $\left. \omega \right|_{\hat{\Sigma}}^{-1}(E_1)$.
    \item If moreover, the Cartan curvature satisfies $\kappa_{\hx}(\zeta,\eta)=0$ for every $\hx \in \hat{\Sigma}$ and  $\zeta,\eta$ in $E_1^{\perp}$, the Ricci-flat Brinkmann metric of $(1)$ can be assumed to be a gravitational $pp$-wave metric.
\end{enumerate}
\end{proposition}

\begin{proof}
Recall the conformal exponential map from Section \ref{sec.exponential}. 

Because $\hat{\Sigma}$ is parallel with respect to $\lieg_{-1} \oplus \lieu^+$,  there exists a neighborhood $\mathcal{V}$ of $0$ in $\lieg_{-1} \oplus \lieu^+$,  for which $\exp(\hat{x}, \mathcal{V})$ is defined and such that $\exp(\hat{x}, \mathcal{V}) \subset \hat{\Sigma}$. We may choose $\mathcal{V}$ small enough that  $\hat{V}_{-1}= \exp(\hat{x}, \mathcal{V} \cap \lieg_{-1})$ is transverse to the fibers of $\pi$   and projects diffeomorphically   onto its image $V$, an open neighborhood of $x$.  Then consider  $\hat{V}=\hat{V}_{-1} \cdot U^+$, the saturation of $\hat{V}_{-1}$  by the $U^+$-action. It is a $U^{+}$-principal bundle over  $V$.  Observe that $\hat{V}$ is still a parallel submanifold with respect to $\lieg_{-1} \oplus \lieu^+$. Indeed, $\omega(T\hat{V}_{-1}) \subset \omega(T\hat{\Sigma}) \subset \lieg_{-1} \oplus \lieu^+$; in fact, by $U^+$-equivariance of $\omega$ and $\Ad(U^+)$-invariance of $\lieg_{-1} \oplus \lieu^+$, the subspaces $\omega(T\hat{V}) \subset \lieg_{-1} \oplus \lieu^+$. 

The bundle $\hat{V}$ defines a metric $g_0$ in the conformal class $[\left. g \right|_V]$ as in (\ref{eq.conformal-class}): namely, for $y \in V$, define:
\begin{equation}
\label{eq.formula}
g_0(u,v):= \mathbb{I}( \iota_{\hy}(u), \iota_{\hy}(v)) \qquad \forall \ u,v \in T_yV 
\end{equation}
where $\hy$ is any point of $\hat{V}_y$, and $\mathbb{I}$ is the Lorentzian scalar product on $\lieg/\liep$ introduced in Sections \ref{sec.equivalence} and \ref{sec.algebraic-facts}.
The definition of $g_0$ is consistent: writing $\mathbb{I}(a,b) = \langle a, b \rangle$, equation (\ref{eq.equivariant-iota}) gives
$$ \langle \iota_{\hy.p}(u), \iota_{\hy.p}(v)\rangle= \langle \Ad(p^{-1})\iota_{\hy}(u), \Ad(p^{-1})\iota_{\hy}(v) \rangle= \langle \iota_{\hy}(u), \iota_{\hy}(v) \rangle \qquad \forall \ p \in U^+.$$

By restriction, the Cartan connection $\omega$ induces a Cartan connection on $\hat{V}$ with values in the Lie algebra $\lieg_{-1} \oplus \lieu^+$, which we will denote $\overline{\omega}$. The curvature $\overline{K}$ of $\overline{\omega}$ is  the restriction of $K$ to $T\hat{V}$.

A vector field on $\hat{V}$ is called \emph{horizontal} if it is tangent to $\overline{\omega}^{-1}(\mathfrak{g}_{-1})$. 
For any vector field $X$ on $V$, there exists a unique lift $\hat{X}$ to $\hat{V}$ which is horizontal and right-$U^+$-invariant. 
Conversely, any horizontal $U^+$-invariant vector field $\hat{X}$ on $\hat{V}$ projects to a well-defined vector field $X$ on $V$.

The formula
\begin{equation}
\label{eq.connection}
\widehat{\nabla_X Y} = \overline{\omega}^{-1} \big( \hat{X} . \overline{\omega}(\hat{Y}) \big)
\end{equation}
defines a connection $\nabla$ on $V$. 
This connection is torsion-free.  Indeed, for any $X, Y \in \mathcal{X}(V)$, 
\begin{equation}
\label{eqn.torsion}
\nabla_X Y - \nabla_Y X - [X,Y]  =  \pi_*(\widehat{\nabla_X Y} - \widehat{\nabla_Y X} - [\hat{X}, \hat{Y}] )
\end{equation}

Then
\begin{equation*}
\omega(\widehat{\nabla_X Y} - \widehat{\nabla_Y X} - [\hat{X}, \hat{Y}]) = \hat{X}.\omega(\hat{Y}) - \hat{Y}.\omega(\hat{X}) - \omega([\hat{X},\hat{Y}])
\end{equation*}
The right-hand side equals $K(\hat{X},\hat{Y})$ because $[\omega(\hat{X}), \omega(\hat{Y})] = 0$ by commutativity of $\lieg_{-1}$.
Now $K(\hat{X},\hat{Y}) = \overline{K}(\hat{X},\hat{Y})$ takes values in 
$(\mathfrak{g}_{-1} \oplus \mathfrak{u}^+) \cap \mathfrak{p} = \mathfrak{u}^+$, so (\ref{eqn.torsion}) equals $0$. 
 Writting $\langle \cdot, \cdot \rangle$ for the pullback of $\mathbb{I}$ from $\lieg/\liep$ to $\lieg$, we observe that for any three vector fields $X, Y, Z$ on $V$
\[
\hat{Z} . \langle \omega(\hat{X}), \omega(\hat{Y}) \rangle
= \langle \hat{Z} . \omega(\hat{X}), \omega(\hat{Y}) \rangle
+ \langle \omega(\hat{X}), \hat{Z} . \omega(\hat{Y}) \rangle,
\]
thus $Z . g_0(X,Y) = g_0(\nabla_Z X, Y) + g_0(X, \nabla_Z Y)$. 
Therefore, $\nabla$ is the Levi-Civita connection of $g_0$.

Let $\hat{E}_1$ be the vector field on $\hat{V}$ defined by $\overline{\omega}(\hat{E}_1) \equiv E_1$. 
Because $\Ad(U^+)$ fixes $E_1$, the field $\hat{E}_1$ is right-$U^+$-invariant and hence defines a vector field $X$ on $V$. 
Since $\langle E_1, E_1 \rangle = 0$, the field $X$ is lightlike. 
Moreover, for any $\hat{Y}$ horizontal and $U^+$-invariant, we have $\hat{Y}. \omega(\hat{E}_1) = 0$; 
therefore $\nabla X = 0$. Thus $X$ is parallel with respect to $g_0$, and $g_0$ is a Brinkmann metric.

Finally, for $R$ the curvature tensor of the metric $g_0$, for $y \in V,$ and $u, v, w \in T_yV$,
\[
R_y(u,v,w)
= \overline{\kappa}(\iota_{\hat{y}}(u), \iota_{\hat{y}}(v)). \iota_{\hat{y}}(w)
= W_y(u,v,w) 
\]
(See Section~\ref{sec.curvature} to recall the link between the Cartan curvature and the Weyl tensor.)
The Riemann tensor of $g_0$ is thus equal to its Weyl tensor, proving that $g_0$ is Ricci-flat.

 Now suppose the curvature hypothesis of point $(2)$ is satisfied and let $\hat{V}$ be as above. 
 Consider $Z_1$ and $Z_2$ two $\omega$-constant vector fields on $\hat{V}$. 
 By equation (\ref{eq.curvature}) defining the curvature, $Z_1$ and $Z_2$ satisfy the identity:
\begin{equation}
\label{eq.identity-curvature}
\omega([Z_1,Z_2])=[\omega(Z_1),\omega(Z_2)]-K(Z_1,Z_2).
\end{equation}
 If moreover $Z_1$ and $Z_2$ are tangent to $\left. \omega \right|_{\hat{V}}^{-1}(E_1^{\perp})$, then 
$$\omega([Z_1,Z_2])=[\omega(Z_1),\omega(Z_2)]=0.$$
The distribution  $\left. \omega \right|_{\hat{V}}^{-1}(E_1^{\perp})$ is thus integrable on $\hat{V}$. 
 With $g_0$ given by (\ref{eq.formula}),
let $X$ as above be the parallel lightlike vector field corresponding to $E_1$ in the frames of $\hat{V}$.
Let ${\mathcal D}$ be the line field generated by $X$.  Each integral leaf of $\left. \omega \right|_{\hat{V}}^{-1}(E_1^{\perp})$ projects to an integral leaf of $\mathcal{D}^{\perp}$ on $V$, so ${\mathcal D}^{\perp}$ is integrable. 
Because the leaves of $\left. \omega \right|_{\hat{V}}^{-1}(E_1^{\perp})$ are $\omega$-parallel by definition, (\ref{eq.connection}) shows that their projections to $V$ are $\nabla$-parallel, hence totally geodesic. The curvature of the induced connection vanishes because by the hypothesis of $(2)$, 
$$\overline{\kappa}(\iota_{\hat{y}}(u), \iota_{\hat{y}}(v)) = 0 \qquad \forall \ u,v \in \mathcal{D}^{\perp}.$$
Therefore the integral leaves of $\mathcal{D}^{\perp}$ are also flat. The metric $g_0$ is thus a Ricci-flat pp-wave.
\end{proof}

\subsection{Existence of a gravitational pp-wave metric in the neighborhood of $x_0$}
\label{ss.pp-wave}
This section provides the proof of Theorem \ref{thm.pp-wave} (2). 
Linearizability of the conformal flow $\{ \varphi^t_Y \}$ at $x_0$ means there is $\hx_0 \in \hm_{x_0}$ and $\{h^t\} < G_0$, such that $\varphi^t_Y(\hx_0).h^{-t}=\hx_0$ for all $t$ (see for instance \cite[Prop. 4.2]{frances.locdyn}).  The linear isomorphism $\iota_{\hx_0}$ conjugates the action of $D_{x_0}\varphi^t_Y$ on $T_{x_0}M$ to the action of $\Ad(h^t)$ on $\lieg/\liep$. Up to conjugating $\{ h^t \}$ in $G_0$, which corresponds to right-translating $\hx_0$ in the fiber, the action of $\Ad(h^t)$ on $\lieg/\liep \cong \lieg_{-1}$ is, as above, by the matrix $\bar{h}^t$ of the form (\ref{eqn.loxo.diffl}) with $a=b=1$, with respect to the basis $(E_1,\ldots,E_n)$ of Section \ref{sec.algebraic-facts}. In $\PO(2,n)$, the block-diagonal form of $h^t$ is :
\begin{equation}
    \label{eqn.ht}
h^t= \diag(e^t,e^{-t},R^t, e^t, e^{-t}),
\end{equation}
where $\{R^t\} < \OO(1,n-1)$ is the same bounded one-parameter subgroup appearing in $\bar{h}^t$.

\subsubsection{Parallel submanifolds determined by the dynamics}
\label{sec.parallel-dynamics}
 The adjoint action of $h^t$ on $\lieg=\operatorname{\mathfrak{so}}(2,n)$ preserves each root space. The $+1$-eigenspace of $\Ad h^t$ for all $t$ is the direct sum  
\begin{equation}
\label{eqn.centralizer.ht}
 \lies_0=\lieg_{-\alpha -\beta} +\liea + \mathfrak{m}+\lieg_{\alpha+\beta} = \{ \xi \in \lieg \ : \ \Ad(h^t)\xi = \xi \ \forall t \}
 \end{equation}
On $\lies_{\pm 1}=\lieg_{\pm \alpha}+\lieg_{\mp \beta}$, the action is by Euclidean similarities 
$$\xi \mapsto e^{\pm t} \Ad(R^t)\xi$$ 
Denote $\lies_{\pm 2}=\lieg_{\pm(\alpha - \beta)}$, the eigenspaces associated to the eigenvalues $e^{\pm 2t}$. The \emph{stable space} for $\Ad(h^t)$, $t \geq 0$, is
$$\lies^<=\lies_0+\lies_{-1}+\lies_{-2}=\lieg_{-1}+\liea+\liem+\lieu^{+}+\lieg_{\alpha+\beta},$$
comprising all vectors $\xi \in \lieg$
 such that $\Ad(h^t)\xi$ remains bounded for $t \geq 0$. The \emph{strongly stable space} is 
 $$\lies^{<<}=\lies_{-1}+\lies_{-2}=E_1^{\perp}+\lieu^+ = \{ \xi \in \lieg \ : \ \lim_{t \to + \infty } \Ad(h^t)\xi=0 \}$$
The distributions $\omega^{-1}(\lies^<)$ and  $\omega^{-1}(\lies^{<<})$ can be integrated to parallel submanifolds in $\hm$, by the following proposition (see also \cite[Sec 4]{mp.confdambra}). 

\begin{proposition}[\cite{frances.degenerescence}, Prop. 4.8]
\label{prop.variete-stable}
There exists a neighborhood $\mathcal{V}$ of $0$ in $\lieg$ such that $\hat{\Sigma}^{<}=\exp(\hx_0, \mathcal{V} \cap \lies^{<})$ and $\hat{\Sigma}^{<<}=\exp(\hx_0, \mathcal{V} \cap \lies^{<<})$ are integral leaves of the distributions $\omega^{-1}( \lies^{<})$ and $\omega^{-1}( \lies^{<<})$.
\end{proposition}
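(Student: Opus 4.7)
My plan is to exploit the linearizing element $h^t \in P$ via the modified flow
\[
\hat{\Phi}^t := R_{h^{-t}} \circ \varphi^t_{\hat{Y}},
\]
which fixes $\hx_0$ since $\varphi^t_{\hat Y}(\hx_0) = \hx_0 \cdot h^t$. The equivariance of the conformal exponential \eqref{eq.equivariance-exponential} then yields
\[
\hat{\Phi}^t(\exp(\hx_0, \xi)) = \exp(\hx_0, \Ad(h^t)\xi),
\]
so $\hat{\Phi}^t$ acts in exponential coordinates at $\hx_0$ exactly as the linear map $\Ad(h^t)$ on $\lieg$. First I would observe that $\lies^<$ and $\lies^{<<}$ are Lie subalgebras of $\lieg$: they are sums of $\Ad(h^t)$-weight spaces $\lies_k$ with $k \leq 0$ respectively $k < 0$, and the grading identity $[\lies_i,\lies_j] \subset \lies_{i+j}$ makes each of them automatically closed under brackets (with $\lies^{<<}$ being an ideal of $\lies^<$). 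After possibly shrinking $\mathcal V$, the sets $\Sigma^<$ and $\Sigma^{<<}$ are then smooth submanifolds of $\hm$ of the expected dimensions, forward invariant under $\hat\Phi^t$.

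The heart of the argument is to verify the integral-leaf condition $\omega_\hy(T_\hy\Sigma^<)=\lies^<$ at every $\hy \in \Sigma^<$ (and analogously for $\Sigma^{<<}$). At $\hy=\hx_0$ this is immediate from the definition of the exponential. At a general $\hy = \exp(\hx_0,\xi)$ with $\xi \in \lies^<$, I would expand $d_\xi\exp(\hx_0,\cdot)$ using the universal formula involving iterated $\ad_\xi$ and values of the Cartan curvature $\kappa$ along the curve $t \mapsto \exp(\hx_0, t\xi)$. Since $\lies^<$ is a Lie subalgebra, all $\ad$-terms preserve it, so the whole question reduces to the algebraic closure
\[
\kappa_\hz(\lies^<,\lies^<) \subset \liep \cap \lies^< \qquad \text{for every } \hz \in \Sigma^<,
\]
and similarly for $\lies^{<<}$. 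At $\hz=\hx_0$ I would establish this by a weight analysis: since $\varphi^t_{\hat Y}$ preserves $\omega$ hence $\kappa$, and $\kappa$ is $P$-equivariant in the sense of \S\ref{sec.curvature}, the relation $\hat\Phi^t(\hx_0)=\hx_0$ forces $\kappa_{\hx_0}$ to lie in the $\Ad(h^t)$-fixed part of the $P$-module $\wedge^2(\lieg/\liep)^* \otimes \liep$. Decomposing this module into $\Ad(h^t)$-weight spaces via the root diagram \eqref{eqn:root_decomposition} and enumerating the zero-weight summands, one checks directly that every such fixed tensor does send $\lies^< \wedge \lies^<$ into $\liep \cap \lies^<$; the analogous statement for $\lies^{<<}$ then follows a fortiori.

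To propagate the closure condition from $\hx_0$ to an arbitrary $\hz \in \Sigma^<$, I would use the transport identity
\[
\kappa_{\hat\Phi^t(\hz)}(u,v) = \Ad(h^t) \bigl(\kappa_\hz(\Ad(h^{-t})u,\Ad(h^{-t})v)\bigr),
\]
which, because $\Ad(h^t)$ stabilizes $\lies^<$, shows that the property ``$\kappa_\hz(\lies^<,\lies^<) \subset \lies^<$'' is preserved along $\hat\Phi^t$-orbits inside $\Sigma^<$. Backward $\hat\Phi^t$-orbits in $\Sigma^<$ accumulate on the neutral fixed manifold $\exp(\hx_0, \mathcal V \cap \lies_0)$, where real-analyticity of $\kappa$ combined with the $\Ad(h^t)$-invariance analysis at $\hx_0$ transfers the closure condition to a neighborhood of $\hx_0$ in $\Sigma^<$; invariance under the forward flow then spreads it to all of $\Sigma^<$. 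Together with the $\ad$-expansion of $d_\xi\exp$, this yields the required parallel property, and the same argument, mutatis mutandis, handles $\Sigma^{<<}$. The main obstacle is exclusively the weight-space bookkeeping in $\wedge^2(\lieg/\liep)^*\otimes\liep$: its $\Ad(h^t)$-fixed subspace has several summands arising from cancellations among root weights, and one must verify that none of them can produce an output in $\liep \setminus \lies^<$ when fed inputs from $\lies^<$ — this is precisely the point where the balanced numerology $|a|=|b|$ is used in an essential way.
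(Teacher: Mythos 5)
The paper does not prove this statement itself --- it is quoted from \cite{frances.degenerescence} --- so I am comparing your proposal with the standard argument there, which is purely dynamical and far shorter than what you propose. Set $\hat{\Phi}^t = R_{h^{-t}}\circ\varphi^t_{\hat Y}$ as you do. For any $\hy$ and $v\in T_{\hy}\hm$ one has $\omega(D_{\hy}\hat{\Phi}^t(v)) = \Ad(h^t)\,\omega_{\hy}(v)$, by $P$-equivariance of $\omega$ and $(\varphi^t_{\hat Y})^*\omega=\omega$. If $\hy=\exp(\hx_0,\xi)$ with $\xi\in\mathcal{V}\cap\lies^{<}$ and $v=d_\xi\exp(\hx_0,\cdot)(\eta)$ with $\eta\in\lies^{<}$, then $D_{\hy}\hat{\Phi}^t(v)=d_{\Ad(h^t)\xi}\exp(\hx_0,\cdot)(\Ad(h^t)\eta)$ by \eqref{eq.equivariance-exponential}; choosing $\mathcal{V}$ as a product of balls in root spaces, both arguments stay in a fixed compact set, so $\Ad(h^t)\omega_{\hy}(v)$ is bounded for $t\ge 0$ and hence $\omega_{\hy}(v)\in\lies^{<}$. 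A dimension count finishes the proof, and the strongly stable case is identical with ``bounded'' replaced by ``tends to $0$''. No curvature analysis, no subalgebra structure, and no weight bookkeeping in $\wedge^2(\lieg/\liep)^*\otimes\liep$ are needed.

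Your route has two genuine gaps. First, the reduction of the integral-leaf property to the closure condition $\kappa_{\hz}(\lies^{<},\lies^{<})\subset\liep\cap\lies^{<}$ rests on an unspecified ``universal formula'' for $d_\xi\exp(\hx_0,\cdot)$; for a curved Cartan connection this differential is only characterized by a Jacobi-type ODE along radial curves, and turning your claim into a proof requires setting up that ODE and an invariance/Gronwall argument, none of which is sketched. Second, and more seriously, the propagation of the curvature condition from the fixed locus to all of $\Sigma^{<}$ fails as stated: it is the \emph{forward} $\hat{\Phi}^t$-orbits that accumulate on the neutral manifold $\exp(\hx_0,\mathcal{V}\cap\lies_0)$, not the backward ones, and knowing an identity on that proper submanifold does not transfer it to a neighborhood in $\Sigma^{<}$ by analyticity. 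The correct mechanism is the one the paper uses later in Lemma \ref{lem.curvature-values}: for $\hz=\exp(\hx_0,\xi)$ with $\xi\in\lies^{<}$, the points $\hat{\Phi}^t(\hz)$ converge, so $h^t.\kappa_{\hz}$ stays bounded, which already forces $\operatorname{Im}\kappa_{\hz}\subset\lieu^+\subset\lies^{<<}$ at every point --- and this uses only the set-theoretic description of $\Sigma^{<}$, not your fixed-tensor analysis. Finally, the balanced numerology $|a|=|b|$ plays no essential role here: the proposition is a general stable-manifold statement valid for any linearizable holonomy $h^t$ admitting such a weight decomposition.
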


In the sequel, it will be useful to choose $\mathcal{V}$ as a product of convex neighborhoods of the origin in each rootspace. 

\subsubsection{Special values of the curvature function}
\label{sec.special-curvature}
 Recall from Proposition \ref{prop.brinkmann} that a local integral leaf of the distribution $\omega^{-1}(\lieg_{-1} + \lieu^+)$ corresponds to a local Brinkmann metric in the conformal class.
We will show that this distribution is integrable on the manifold $\Sigma^<$. Involutivity will follow from the special form of the Cartan curvature: 

\begin{lemma}
\label{lem.curvature-values}
For every $\hx \in \hat{\Sigma}^{<}$, the curvature $\kappa_{\hx}$ has the following properties:
\begin{enumerate}
\item $\operatorname{Im} \kappa_{\hx} \subset \lieu^+$.
\item For every $\xi,\eta$ in $E_1^{\perp}$, $\kappa_{\hx}(\xi,\eta)=0$.
\end{enumerate}
\end{lemma}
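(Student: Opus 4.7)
My plan is to establish (1) and (2) first at the distinguished point $\hx_0$ by a weight analysis based on the $\Ad(h^t)$-invariance of $\kappa_{\hx_0}$, and then propagate the conclusion to an arbitrary point of $\hat{\Sigma}^{<}$ by a convergence argument as $t \to +\infty$.

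At $\hx_0$, the lift of $\varphi^t_Y$ preserves $\omega$ and hence the curvature function, so $\kappa_{\hx_0 \cdot h^t} = \kappa_{\hx_0}$. Combined with the $P$-equivariance $\kappa_{\hx \cdot p} = p^{-1} \cdot \kappa_{\hx}$, this gives
\[
\kappa_{\hx_0}\bigl(\Ad(h^t) u, \Ad(h^t) v\bigr) = \Ad(h^t)\, \kappa_{\hx_0}(u, v)
\]
for all $t$ and $u, v \in \lieg_{-1}$. Decompose $h^t = d^t r^t$ with $d^t \in \exp(\liea)$ the $\BR$-split part and $r^t \in M$ the compact part, so that $\Ad(d^t)$ and $\Ad(r^t)$ commute and both preserve every root space. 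The $\Ad(d^t)$-weights on $\lieg_{-1}$ are $-2, -1, 0$ on $\BR E_1$, $\lieg_{-\alpha}$ and $\BR E_n$ respectively, while those on $\liep = \liea + \liem + \lieu^+ + \lieu^- + \lieg_\alpha + \lieg_{\alpha-\beta} + \lieg_{\alpha+\beta}$ are $-1, 0, 1, 2$. Crucially, no $\liep$-weight lies below $-1$, so the invariance above forces $\kappa_{\hx_0}(u, v) = 0$ whenever the sum of the $\Ad(d^t)$-weights of $u$ and $v$ is less than $-1$, and confines the only remaining case $w_u + w_v = -1$ (which necessarily pairs $\lieg_{-\alpha}$ with $\BR E_n$) to the weight-$(-1)$ subspace of $\liep$, namely $\lieu^+ = \lieg_\beta$. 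This yields (1) and (2) at $\hx_0$.

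For an arbitrary $\hx = \exp(\hx_0, \xi) \in \hat{\Sigma}^{<}$, the equivariance of $\exp$ under the $P$-action combined with $\varphi^t_Y(\hx_0) = \hx_0 \cdot h^t$ yields $\varphi^t_Y(\hx) = \exp(\hx_0, \Ad(h^t)\xi) \cdot h^t$. Applying $\kappa \circ \varphi^t_Y = \kappa$ and $P$-equivariance produces the transport identity
\[
\Ad(h^t) \cdot \kappa_{\hx}\bigl(\Ad(h^{-t}) u, \Ad(h^{-t}) v\bigr) = \kappa_{\exp(\hx_0, \Ad(h^t)\xi)}(u, v).
\]
Since $\xi \in \lies^<$ and $\mathcal{V}$ was chosen as a product of convex root-space neighborhoods, $\Ad(h^t)\xi$ remains in $\mathcal{V}$ for $t \geq 0$ and converges to its fixed component $\xi_0 \in \lies_0$ as $t \to +\infty$. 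Continuity of $\kappa$ then makes the right-hand side converge to $\kappa_{\exp(\hx_0, \xi_0)}$. Decomposing the left side by $\Ad(d^t)$-weights, each elementary term scales like $e^{(m - w_u - w_v) t}$ multiplied by an $\Ad(r^t)$-rotation of a fixed vector; since a non-zero vector under a compact rotation cannot tend to $0$ or blow up, convergence of the left side forces every component with $m > w_u + w_v$ to vanish. These are exactly the weight constraints derived at $\hx_0$, so (1) and (2) hold at $\hx$ as well.

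The main obstacle is the compact rotational factor $r^t$, which prevents a genuine real diagonalization of $\Ad(h^t)$; one must work purely through the $\Ad(d^t)$-filtration and invoke the boundedness of $\Ad(r^t)$-orbits on non-zero vectors to translate convergence of the transport identity into the desired term-by-term vanishing. Once this is arranged, the proof reduces to a finite check of root-space weights in $\so(2, n)$, with the key structural input being the absence of $\liep$-weights below $-1$.
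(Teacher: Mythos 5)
Your proposal is correct and follows essentially the same route as the paper: both arguments rest on the fact that $h^t.\kappa_{\hx}=\kappa_{\exp(\hx_0,\Ad(h^t)\xi)}$ stays bounded as $t\to+\infty$ because $\Ad(h^t)\xi$ converges to its $\lies_0$-component, and then compare the $\Ad(h^t)$-contraction rates on $\lieg_{-1}$ (weights $-2,-1,0$) with those on $\liep$ (no weight below $-1$, attained only on $\lieu^+$), handling the compact factor $R^t$ by passing to a convergent subsequence. The only differences are cosmetic: your preliminary analysis at $\hx_0$ is subsumed by the general case $\xi=0$, and one should note that $\kappa_{\hx}(\Ad(r^{-t})u,\Ad(r^{-t})v)$ is not literally a rotated fixed vector but a bounded family whose subsequential limits are handled exactly as in the paper.
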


\begin{proof}
Let $\hx \in \hat{\Sigma}^{<}$. It is of the form $\hx=\exp(\hx_0,\xi)$, for $\xi \in \mathcal{V} \cap \lies^{<}$.  Writing $\xi = \xi^0 + \xi^{<<}$ with $\xi^0 \in \lies^0$ and $\xi^{<<} \in \lies^{<<}$,
the limit $\Ad(h^t)\xi \rightarrow \xi^0$ as $t \to + \infty$.
Equation (\ref{eq.equivariance-exponential}) yields:
$$ \hx_t:=\varphi^t_Y(\hx). h^{-t}=\exp(\hx_0,\Ad(h^t)\xi) \rightarrow \hx_* = \exp(\hx_0,\xi^0)$$
In particular, $\lim_{t \to + \infty}\kappa_{\hx_t}=\kappa_{\hx_*}$. The equivariance of $\kappa$, equation (\ref{eq.curvature}), gives $\kappa_{\hx_t}=h^t.\kappa_{\hx}$, from which we deduce that $h^t.\kappa_{\hx}$ remains bounded for $t \geq 0$. 
Referring to Section \ref{sec.parallel-dynamics} and (\ref{eqn.ht}) above, 
let $\xi,\eta$ each belong to one of the root spaces 
 $\lieg_{-\alpha+ i \beta}$, $i=-1,0,1$, and compute
 $$\kappa_{\hx}(\Ad(h^{-t}) \xi,\Ad(h^{-t}) \eta)=e^{\lambda t}\kappa_{\hx}(\Ad(R^{-t}) \xi, \Ad(R^{-t}) \eta),$$
 where $\lambda=1,2$ or $3$. Since $h^t.\kappa_{\hx}$ is bounded,  there exists $C \geq 0$ such that 
 \begin{equation}
 \label{eq.contraction}
 |\Ad(h^t). \kappa_{\hx}(\Ad(R^{-t}) \xi, \Ad(R^{-t}) \eta)| \leq Ce^{-\lambda t} \qquad \forall \ t\geq0
 \end{equation}

Because $\{ R^t\}$ is bounded, there is a subsequence $t_k \to \infty$ such that $R^{-t_k}\to R_*$. By Equation (\ref{eq.contraction}), the components of $\kappa_{\hx}(\Ad(R_*) \xi, \Ad(R_*)  \eta)$ on rootspaces on which $\Ad(h^t)$ does not act as a contraction must vanish.  The only rootspace in $\liep$ which is contracted by $\Ad(h^t)$ is $\lieu^{+}$, 
thus $\kappa_{\hx}(\Ad(R_*) \xi, \Ad(R_*) \eta)\in \lieu^{+}$.  The first point of Lemma \ref{lem.curvature-values} follows, because $\Ad(R_*)$ is an isomorphism on each rootspace.

To prove the second point of the lemma, observe that $E_1^{\perp}= \lieg_{-\alpha+\beta} \oplus \lieg_{-\alpha}$. 
Two vectors $\xi$ and $\eta$, each in $ \lieg_{-\alpha + i \beta}$ for $i=0,1$,
satisfy equation (\ref{eq.contraction}), but the possible values of $\lambda$ are $2$ or $3$. On the other hand, the action of $\Ad(h^t)$ on each rootspace of $\liep$ is by a transformation of the form $\xi \mapsto e^{\mu t} \Ad(R^t) \xi$, where $\mu \geq -1$. For the same sequence $\{ t_k \}$ as above, necessarily every rootspace component of $\kappa_{\hx}(\Ad(R_*) \xi, \Ad(R_*) \eta) = 0.$
This finishes the proof of Lemma \ref{lem.curvature-values}.
\end{proof}

\subsubsection{Construction of the gravitational pp-wave at $x_0$.}
\label{subsec.tot.geod.leaves}

Let $Z_1$ and $Z_2$ be $\omega$-constant vector fields on $\hat{\Sigma}^<$ tangent to $\omega^{-1}(\lieg_{-1}+\lieu^+)$. If $\omega(Z_1)$ or $\omega(Z_2)$ belongs to $\lieu^+$, then $K(Z_1,Z_2)=0$ and $\omega([Z_1,Z_2]) \in \lieg^{-1}+\lieu^+$ 
by (\ref{eq.identity-curvature}). If both $\omega(Z_1)$ and $\omega(Z_2)$ belong to $\lieg_{-1}$, then Lemma \ref{lem.curvature-values}(1) ensures that $\omega([Z_1,Z_2]) \in \lieu^+$. Therefore $\omega^{-1}(\lieg_{-1}+\lieu^+)$ is involutive on $\hat{\Sigma}^{<}$.  Moreover, Lemma \ref{lem.curvature-values} (2) gives that $\kappa_{\hx}(\xi,\eta)=0$ for every $\xi,\eta$ in $E_1^{\perp}$ and $\hx \in \hat{\Sigma}^{<}$. Let $\hat{V}$ be an integral leaf through $\hat{x}_0$.  It follows from Proposition \ref{prop.brinkmann} that, after shrinking $\hat{V}$ if necessary, there exists a neighborhood $V = \pi(\hat{V})$ of $x_0$, as well as a gravitational pp-wave metric in $[\left. g \right|_V]$,
and that the projection to $V$ of the restriction to $\hat{V}$ of $\hat{E}_1$, the $\omega$-constant vector field associated to $E_1$, is lightlike and parallel with respect to $g_0$.

\subsubsection{The field $Y$ is homothetic for $g_0$.}
\label{subsec.homothetic}
Now we can express $\hat{V}$, the integral leaf through $\hat{x}_0$, as 
$\exp(\hx_0, {\mathcal V}_1) \cdot U^+$, where ${\mathcal V}_1$ is a small neighborhood of $0$ in $\lieg_{-1}$. 
Taking ${\mathcal V}_1$ as a product of balls in $\lieg_{-\alpha+\beta}$, $\lieg_{-\alpha}$, and $\lieg_{-\alpha-\beta}$, we can ensure that $\Ad(h^t)({\mathcal V}_1) \subset {\mathcal V}_1$ for all $t \geq 0$. 

Let $\hy \in \hat{V}$, 
and write $\hy=\exp(\hx_0,\xi) \cdot u$ for some $\xi \in {\mathcal V}_{-1}$ and $u \in U^+$. 
Then 
\[
\varphi^t_Y (\hy) \cdot h^{-t} = 
\varphi^t_Y(\exp(\hx_0,\xi) \cdot u) \cdot h^{-t}
  = \exp(\hx_0, \Ad(h^t)\xi) \cdot h^t u h^{-t},
\]
which is an element of $\hat{V}$ because $\Ad(h^t)\xi \in {\mathcal V}_{-1}$ and $h^t u h^{-t} \in U^+$.
Let $y \in V$ and $\hy \in \hat{V}_y$. Given $u,v \in T_yM$, set $g_0(u,v)=\langle \iota_{\hy}(u), \iota_{\hy}(v) \rangle$ as above.  Because $\varphi^t_Y(\hy) \cdot h^{-t}$ is also in $ \hat{V}$, (\ref{eq.formula}) gives
\[
g_0(D_y\varphi^t_Y(u),D_y\varphi^t_Y(v))
 = \langle \iota_{\varphi^t_Y(\hy) \cdot h^{-t}}(D_y\varphi^t_Y(u)), 
    \iota_{\varphi^t_Y(\hy) \cdot h^{-t}}(D_y\varphi^t_Y(v)) \rangle.
\]
From (\ref{eq.equivariant-iota}) and $\varphi^t_Y$-invariance of $\omega$,
$$\iota_{\varphi^t_Y(\hy) \cdot h^{-t}}(D_y\varphi^t_Y(u))
  = \Ad(h^t)\iota_{\varphi^t_Y(\hy)}(D_y\varphi^t_Y(u)) = \Ad(h^t) \iota_{\hy}(u)$$
Thus
\[
g_0(D_y\varphi^t_Y(u), D_y\varphi^t_Y(v))
  = \langle \Ad(h^t)\iota_{\hy}(u), \Ad(h^t)\iota_{\hy}(v)\rangle
  = e^{2t}\langle \iota_{\hy}(u), \iota_{\hy}(v) \rangle
  = e^{2t}g_0(u,v).
\]
The second point of Theorem \ref{thm.pp-wave} is now fully proved.

\subsection{Third point of Theorem \ref{thm.pp-wave}: Existence of a global polarization}
\label{sec.polarization}

We assume now that $(M,g)$ {\it is not conformally flat}, which, by the first point of Theorem \ref{thm.pp-wave}, implies that $M$ has dimension $\geq 4$. 

According to the following lemma, $(M,g)$ can be polarized with respect to at most one lightlike line field.

\begin{lemma}[\cite{pecastaing.can.sl2r}, Lemma 9]
\label{lem.polarization}
Let $(M,g)$ be a real-analytic Lorentzian manifold of dimension $\geq 4$. Assume that $M$ is polarized with respect to distinct lightlike line fields ${\mathcal D}$ and ${\mathcal D}'$. Then $(M,g)$ is conformally flat.
\end{lemma}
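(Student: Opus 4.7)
The plan is to argue pointwise at any point where the two lightlike line fields differ, and then to spread the resulting vanishing of the Weyl tensor by real-analytic continuation. Let $U = \{x \in M : \mathcal{D}_x \neq \mathcal{D}'_x\}$. Since the two distributions are distinct and $M$ may be assumed connected, $U$ is a non-empty open subset of $M$. The goal is to show $W_x = 0$ for every $x \in U$; real-analyticity of $g$ then propagates this vanishing to all of $M$, giving conformal flatness.

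Fix $x \in U$ and choose lightlike generators $e_- \in \mathcal{D}_x$ and $e_+ \in \mathcal{D}'_x$. Because the Witt index of a Lorentzian form is one, no totally isotropic $2$-plane exists, and one can rescale so that $g_x(e_-,e_+)=1$. Then $P_x = \mathrm{span}(e_-,e_+)$ is Lorentzian, its orthogonal $P_x^\perp$ is Euclidean of dimension $n-2 \geq 2$, and I complete $(e_-,e_+)$ to a Witt-type basis $(e_-, e_1, \ldots, e_{n-2}, e_+)$ of $T_xM$. A key observation is that $\mathcal{D}_x^\perp \cap \mathcal{D}'_x{}^\perp = P_x^\perp$.

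Combining the two image conditions gives $\operatorname{Im} W_x \subset \mathcal{D}_x^\perp \cap \mathcal{D}'_x{}^\perp = P_x^\perp$. Lowering an index, the $(0,4)$-tensor $W_x$ vanishes whenever its fourth argument lies in $P_x$; using the pair-exchange symmetry $W_{abcd} = W_{cdab}$ together with antisymmetry in $(a,b)$ and in $(c,d)$, this forces $W_{abcd} = 0$ as soon as \emph{any} of its four indices corresponds to $e_-$ or $e_+$. The only components of $W_x$ that could still be nonzero are therefore the $W_{ijk\ell}$ with purely Euclidean indices $i,j,k,\ell \in \{1,\ldots,n-2\}$.

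To conclude, I invoke the remaining part of the polarization with respect to $\mathcal{D}$: the condition $W_x(\mathcal{D}_x^\perp, \mathcal{D}_x^\perp, \mathcal{D}_x^\perp)=0$, applied to triples $(e_i,e_j,e_k)$ of Euclidean basis vectors, yields $W_x(e_i,e_j)e_k = 0$. Since this vector already lies in $P_x^\perp$ by the previous step, expanding it in the $e_\ell$-basis kills precisely the remaining components $W_{ijk\ell}$. Hence $W_x = 0$ on all of $U$, and by real-analyticity $W$ vanishes identically. I do not expect a serious obstacle: the proof is purely pointwise and algebraic, and the only technical care lies in exploiting the Weyl symmetries to squeeze both image conditions into a single statement about the Euclidean block $P_x^\perp$, after which a single polarization identity suffices to finish.
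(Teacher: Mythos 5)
Your proof is correct and follows the same outline as the paper's: establish pointwise vanishing of the Weyl tensor wherever the two line fields differ, then propagate by real-analyticity on the connected manifold. The only difference is that the paper treats the pointwise statement as a black box, citing Lemma~9 of \cite{pecastaing.can.sl2r}, whereas you prove it directly; your algebra is sound --- since two distinct null lines in Lorentzian signature cannot be orthogonal, the two image conditions combined with the pair-exchange and antisymmetry properties of the $(0,4)$-Weyl tensor kill every component with an index in $\operatorname{span}(e_-,e_+)$, and a single condition $W(\mathcal{D}^\perp,\mathcal{D}^\perp,\mathcal{D}^\perp)=0$ then annihilates the remaining purely Euclidean block (your last remark that $W(e_i,e_j)e_k$ lies in $P_x^\perp$ is superfluous, as that vector is already zero).
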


(In fact, Lemma~9 of \cite{pecastaing.can.sl2r} establishes a pointwise result: if a Lorentzian manifold is polarized with respect to two distinct lightlike lines ${\mathcal D}_x$ and ${\mathcal D}'_x$ at a point $x$, then the Weyl tensor vanishes at $x$.) 

\subsubsection{Local conformal vector field and polarization.}
\label{subsec.polarization}

Let $\hat{V} \rightarrow V$ be as above, with $X$ the lightlike vector field on $V$ equal the projection of $\left. \omega \right|_{\hat{V}}^{-1}(E_1)$ and ${\mathcal D}$ the line field generated by $X$.

For $x \in V$ and $\hx \in \hat{V}_x$, Lemma \ref{lem.curvature-values} (1) ensures that $\kappa_{\hx}(\iota_{\hx}(u), \iota_{\hx}(v)) \in \lieu^{+}$ for all $u,v$ are in $T_xM$. Thus $\kappa_{\hx}(\iota_{\hx}(u), \iota_{\hx}(v)). \iota_{\hx}(w) \in E_1^{\perp}$ for all $w \in T_xM$. In other words, according to the interpretation of the Weyl curvature in terms of $\kappa$ from Section \ref{sec.geometric}, the image of $W_x$ is contained in $\mathcal{D}^{\perp}$ for all $x \in V$, and the conformal structure is polarized with respect to $\mathcal{D}$ on $V$.

Because $X$ is parallel for $g_0$, it is a Killing vector field of $g_0$, and in particular a conformal vector field on $V$. 
The theorem of Amores \cite{amores.killing} says that, after lifting $X$ to a diffeomorphic neighborhood $\tilde{V} \cong V$ in the universal cover $\tilde{M}$ of $M$, it extends to a global conformal vector field on $\tilde{M}$, which we denote $\tilde{X}$. As $\tilde{X}$ is lightlike on $\tilde{V}$, it is lightlike on all of $M$ by analyticity. 
Since $M$ is not conformally flat, Theorem 1 of \cite{frances.ccvf} implies that $\tilde{X}_p \neq 0$ for all $p \in \tilde{M}$. 

Let $\tilde{\mathcal{D}}$ be the analytic lightlike line distribution generated by $\tilde{X}$. The conformal structure on $\tilde{V}$ is polarized with respect to $\tilde{\mathcal{D}}.$ Now, $\tilde{\mathcal{D}}^\perp$ is an analytic distribution on $\tilde{M}$, and the conditions
$$ W(\tilde{\mathcal{D}}^\perp, \tilde{\mathcal{D}}^\perp, \tilde{\mathcal{D}}^\perp) = 0 \qquad \mbox{Im } W \subseteq \tilde{\mathcal{D}}^\perp$$
correspond locally to analytic equations on $\tilde{M}$. Therefore, the conformal structure is polarized with respect to $\tilde{\mathcal{D}}$ on all of $\tilde{M}$.  

Let $\pi_1(M) \cong \Gamma < \Conf(\tilde{M})$ be the deck transformations of $\tilde{M} \rightarrow M$.  For each $\gamma \in \Gamma$, conformal invariance of $W$ implies that $\tilde{M}$ is polarized with respect to $\gamma_* \tilde{\mathcal{D}}.$  By Lemma \ref{lem.polarization}, $\gamma_* \tilde{\mathcal{D}} = \tilde{\mathcal{D}}$ for all $\gamma \in \Gamma.$  Thus $\tilde{\mathcal{D}}$ descends to a line field on $M$ extending $\mathcal{D}$, with respect to which $M$ is polarized.  The extended line field will still be denoted $\mathcal{D}$.
By the same reasoning with Lemma \ref{lem.polarization}, the line field $\mathcal{D}$ is $\Conf^{loc}({M})$-invariant. 

Note, from the explicit linearization $\bar{h}^t$ of $\varphi^t_Y$ near $x_0$, that $\mathcal{D}$ is transverse to the singular locus of $Y$ at $x_0$. Indeed, $\mathcal{D}_{x_0}$ is spanned by the projection $\pi_*(\left. \omega \right|_{\hat{V}}^{-1}(E_1))$ while the tangent to the singular locus is spanned by $\pi_*(\left. \omega \right|_{\hat{V}}^{-1}(E_n))$.

\subsubsection{The field $Y$ is contained in $\mathcal{D}^{\perp}$}
\label{subsec.tangent}

Let $\hat{x}_0 \in \hat{V}$, and let $\hx_s:=\exp(\hx_0,sE_n)$ for $|s| < \epsilon$.  For $\epsilon$ sufficiently small, $\hx_s \in \hat{V}$ for all $s \in (-\epsilon,\epsilon)$, because $\hat{V}$ is parallel with respect to $\lieg_{-1}+ \lieu^+$. Let $\mathcal{W}$ be a neighborhood of $0$ in $E_1^{\perp}$, such that $\exp(\hx_s,\xi)$ is defined for all $s \in (-\epsilon,\epsilon)$ and $\xi \in \mathcal{W}$. Since $\left. \omega \right|_{\hat{V}}^{-1}(E_1^{\perp})$ is integrable on $\hat{V}$, the sets $\hat{F}_s:=\exp(\hx_s,\mathcal{W})$ are integral leaves of $\left. \omega \right|_{\hat{V}}^{-1}(E_1^{\perp})$ contained in $\hat{V}$. The Inverse Function Theorem implies that, shrinking $\epsilon$ and $\mathcal{W}$ if necessary, the map $\Phi: (-\epsilon,\epsilon) \times \mathcal{W} \to M$ defined by $\Phi(s,\xi):=\pi(\exp(\hx_s,\xi))$ is a diffeomorphism onto an open subset of $V$, which we assume to be $V$ to simplify notation. The submanifolds $F_s:=\pi(\hat{F}_s)$ foliate $V$, and are integral leaves of ${\mathcal D}^{\perp}$. For $y\in F_s$, lift $y$ to $\hy=\exp(\hx_s,\xi)$, with $\xi \in \mathcal{W}$. The equivariance of the exponential map (\ref{eq.equivariance-exponential}), and the property $\Ad(h^t)E_n=E_n$  yield
\[ \varphi^t_Y(\hx_s).h^{-t}=\exp(\hx_0, s\Ad(h^t)E_n)=\hx_s .\]
Now $\varphi^t_Y(\hy).h^{-t}=\exp(\hx_s,\Ad(h^t)\xi)$, and $\Ad(h^t)\xi \in \mathcal{W}$ for small $t$, since $E_1^{\perp}$ is preserved by $\Ad(h^t)$.
We conclude that $\varphi^t_Y(\hy).h^{-t} \in \hat{F}_s$, which implies $\varphi^t_Y(y) \in F_s$.
Thus $Y$ is tangent to $F_s$, which means it is contained in $\mathcal{D}^\perp$ inside $V$. By analyticity, $Y$ is contained in $\mathcal{D}^\perp$ at every point of $M$.
Points 3(a) and 3(b) of Theorem \ref{thm.pp-wave} are proved.

\subsection{Local gravitational pp-waves and Cartan bundle reduction}
\label{subsec.gravitatonal-everywhere}

This section will complete the proofs of points $(c)$ and $(d)$ of Theorem \ref{thm.pp-wave} (3). 
Denote ${\bf P}(\lieg)$ the projectivization of $\lieg$, and for $0 \neq u \in \lieg$, denote $[u] \in {\bf P}(\lieg)$ the projection of $u$. Let $X$ be the local conformal vector field on $V$ as constructed in Section \ref{subsec.polarization}, and $\hat{X}$ its lift to the subbundle $\hat{V}$.  Recall that $X$ was defined as $( \left. \pi \right|_{\hat{V}})_* (\hat{E_1})$, where $\omega(\hat{E_1}) \equiv E_1.$

\begin{lemma}
    \label{lem.valeurs-X}
    For every $\hx \in \hat{V}$, we have $\omega(\hat{X}_{\hx})=E_1$.
\end{lemma}
\begin{proof}
   For $\hx \in \hat{V}$, let $\omega(\hat{X}_{\hx})=E_1+\xi_0(\hx)+ \xi_{+1}(\hat{x})$, where $\xi_0(\hx)\in \lieg_0$ and $\xi_{+1}(\hx) \in \lieg_{+1}$.  
   The following two elementary algebraic properties of the Lie algebra $\oo(2,n)$ are easily checked using the matrix  expressions given in Section \ref{sec.algebraic-facts}.

   \begin{enumerate}[(i)]
   \item{For every $0 \neq u \in \lieg_0$, there exists $\zeta \in E_1^{\perp}=\lieg_{\beta- \alpha} \oplus \lieg_{- \alpha}$ such that $0 \neq [u, \zeta] \in \lieg_{-1}$.}
   \item{For every $0 \neq u \in \lieg_{+1}$, there exists $\zeta \in \lieg_{-\alpha}$ such that $0 \neq [\zeta,u] \in \lieg_0$.}
   \end{enumerate}

Given $\zeta \in E_1^{\perp}$, let $\hat{Z}$ be the associated $\omega$-constant vector field. Equation (\ref{eq.killing.cartan}) gives
\[ \hat{Z}.(\xi_0+\xi_{+1})(\hx)= [\xi_0(\hx)+\xi_{+1}(\hx),\zeta]-\kappa_{\hx}(E_1,\zeta) \qquad \forall \ \hx \in \hat{V} \]
 The property of the curvature function on $\hat{V}$ given by Lemma \ref{lem.curvature-values} (2) yields $\kappa_{\hx}(E_1,\zeta)=0$.
If $\xi_0(\hx) \not =0$ for some $\hx \in \hat{V}$, let $\zeta \in E_1^{\perp}$ such that $0 \neq [\xi_0(\hx), \zeta] \in \lieg_{-1}$. On the other hand $[\xi_0(\hx), \zeta]=\hat{Z}.(\xi_0+\xi_{+1})(\hx)-[\xi_{+1}(\hx),\zeta]$, but the right-hand side is in $\liep$, a contradiction.
Next suppose $\xi_{+1}(\hx) \not =0$ for some $\hx \in \hat{V}$, and take $\zeta \in \lieg_{-\alpha}$ such that $0 \neq [\xi_{+1}(\hx),\zeta] \in \lieg_0$. But then equality $[\xi_{+1}(\hx),\zeta]=(\hat{Z}.\xi_{+1})(\hx)$ yields a contradiction, since the right-hand term belongs to $\lieg_{+1}$.
\end{proof}

Next recall the following:
\begin{lemma}\cite[Lemma 2.5]{pecastaing.solvable}
\label{lem.vector-fields-proportional}
    Let $X$ and $Z$ be two conformal vector fields on a connected pseudo-Riemannian manifold $(M,g)$ of dimension $\geq 3$. If  $X_x$ and $Z_x$ are proportional for every $x \in M$, 
    then $X$ and $Z$ are collinear.
\end{lemma}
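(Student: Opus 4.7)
My plan is to split the argument into a local step on the open set where $X$ is non-vanishing, and a global rigidity step. Set $U := \{x \in M : X_x \neq 0\}$. If $U = \emptyset$ then $X \equiv 0$ and the statement is trivial; otherwise pointwise proportionality together with $X \neq 0$ on $U$ yields a unique smooth function $f : U \to \BR$ such that $Y = fX$ on $U$, and I want to show that $f$ equals a single constant on all of $M$.

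First I would check that $f$ is locally constant on $U$. Writing the conformal distortions as $L_X g = 2\lambda g$ and $L_Y g = 2\mu g$, a direct Leibniz computation yields, on $U$,
\[
L_{fX} g = 2 f \lambda\, g + df \otimes \iota_X g + \iota_X g \otimes df.
\]
Equating with $2\mu g$ gives
\[
df \otimes \iota_X g + \iota_X g \otimes df = 2(\mu - f\lambda)\, g.
\]
The left-hand side is a symmetric $2$-tensor of rank at most $2$, while $g$ has rank $n \geq 3$. Hence $\mu - f\lambda$ must vanish and $df \otimes \iota_X g + \iota_X g \otimes df = 0$ pointwise on $U$. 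Non-degeneracy of $g$ together with $X \neq 0$ on $U$ forces $\iota_X g$ to be nonzero at every point of $U$, so a standard fact about one-forms (if $\omega \otimes \eta + \eta \otimes \omega = 0$ and $\eta$ is nonzero, then $\omega = 0$) yields $df \equiv 0$ on $U$. Thus $f$ is constant on each connected component of $U$.

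Next I would globalize. Fix a connected component $U_0 \subset U$ and let $c \in \BR$ be the value of $f$ on $U_0$. Then $Z := Y - cX$ is a conformal vector field on $M$ vanishing identically on the non-empty open set $U_0$. A conformal vector field on a connected pseudo-Riemannian manifold of dimension $\geq 3$ is uniquely determined by the value of its lift to the normal Cartan bundle $\hm$ at a single point, as one sees from the Killing transport equation \eqref{eq.killing.transport}, a linear ODE along curves with prescribed initial data. In particular, such a field vanishing on a non-empty open subset of a connected $M$ vanishes everywhere. Applied to $Z$, this gives $Y = cX$ on all of $M$, which is the required collinearity.

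The step I expect to be most delicate is the rank argument in Step~1: it is precisely the hypothesis $\dim M \geq 3$ that makes the symmetric rank-$\leq 2$ tensor on the left-hand side incompatible with a nonzero multiple of $g$, and that then forces the factor $df$ to vanish separately on $U$. The globalization in Step~2 is a classical rigidity consequence of the conformal Cartan connection and does not require real-analyticity of $(M,g)$.
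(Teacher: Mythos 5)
Your proposal is correct. Note that the paper does not actually prove this lemma; it only cites it from \cite[Lemma 2.5]{pecastaing.solvable}, so there is no in-paper argument to compare against. Your two steps are both sound and are the standard route: the Leibniz identity $L_{fX}g = 2f\lambda g + df\otimes\iota_X g + \iota_X g\otimes df$ combined with the rank comparison (a symmetric tensor of rank at most $2$ cannot equal a nonzero multiple of a nondegenerate form in dimension $\geq 3$) correctly forces $df=0$ on $\{X\neq 0\}$, and the globalization via unique continuation for conformal vector fields in dimension $\geq 3$ (which the paper itself recalls in Section 3 through the Killing transport equation, and which indeed requires no analyticity) legitimately upgrades local constancy on one component of $\{X\neq 0\}$ to the global identity $Y=cX$.
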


Recall from Section \ref{subsec.polarization} that on the universal cover $\tilde{M}$ there is a globally defined conformal vector field $\tilde{X}$ extending any local lift of $X$, and that any conformal deck transformation of $\tilde{M} \rightarrow M$ preserves the line field $\tilde{\mathcal{D}} = \BR \tilde{X}$.  Every conformal deck transformation pushes $\tilde{X}$ forward to another conformal vector field, so it acts on $\tilde{X}$ by a scalar, by Lemma \ref{lem.vector-fields-proportional}.  Lifting $\tilde{X}$ to the Cartan bundle of $\tilde{M}$ and evaluating with $\omega$ yields a map $\tilde{\eta}$ to ${\bf P}(\lieg)$ which descends to a well-defined, $P$-equivariant map $\eta : \hm \rightarrow {\bf P}(\lieg)$, such that $\left. \eta \right|_{\hat{V}} = [\left. \omega \right|_{\hat{V}}(\hat{X})] \equiv [E_1].$

Denote $o = [E_1] \in {\bf P}(\lieg)$, and by $\mathcal{O}$ the orbit $P.o$. 

\begin{proposition}
    \label{prop.mono-orbite}
    The image $\eta(\hm)$ equals the $P$-orbit $\mathcal{O}$.
\end{proposition}

\begin{proof}
Lemma \ref{lem.valeurs-X} ensures that $\eta$ takes values in the orbit $\mathcal{O}$ on $\hat{V}.P$, an open subset of $\hm$. Because $\eta$ is analytic, $\eta(\hm) \subseteq {\mathcal{O}}^Z$, the Zariski closure of $\mathcal{O}$ in ${\bf P}(\lieg)$. As $P$ is an algebraic subgroup of $\OO(2,n)$, and orbits of algebraic actions are locally closed, the orbit $\mathcal{O}$ is open in ${\mathcal{O}}^Z$. 

\begin{lemma}
    \label{lem.closure}
    Any point in $\overline{\mathcal{O}} \setminus \mathcal{O}$ belongs to $\bf P(\liep) \subset \bf P(\lieg)$.
\end{lemma}

\begin{proof}
  Let $x \in \overline{\mathcal{O}} \setminus \mathcal{O}$, with $(p_k) \subset P$ such that $p_k.o \to x$. Write $p_k=\exp(T_k)g_k$, with $T_k \in \lieg_{+1}$ and $g_k \in G_0$ for all $k$.  Recall that  $G_0$ is isomorphic to $\BR^* \times \OO(1,n-1)$. Use the Iwasawa decomposition to write $g_k=m_ka_kn_k$ where $m_k$ is in a maximal compact subgroup of $G_0$, $a_k \in A$, and $n_k \in U^+$. Observe  that $m_k$ normalizes $\lieg_{+1}$, hence replacing  $T_k$ by $\Ad(m_k)^{-1}(T_k)$, we may write $p_k=m_k\exp(T_k)a_kn_k$. 
Choose a Euclidean scalar product on $\lieg$ such that all the rootspaces are mutually orthogonal, and denote $| \cdot |$ the associated norm.  Decomposing $T_k$ along the rootspaces of $\lieg_{+1}=\lieg_{\alpha+\beta}\oplus \lieg_{\alpha} \oplus \lieg_{\alpha-\beta}$ gives 
$$T_k=\lambda_k u_{\alpha+\beta}(k)+ \mu_k u_{\alpha}(k)+ \nu_k u_{\alpha- \beta}(k)$$
with $|u_{\alpha+\beta}(k)|=|u_{\alpha}(k)|=|u_{\alpha- \beta}(k)|=1$
and $\lambda_k, \mu_k, \nu_k \geq 0$. After passing to a subsequence of $(p_k)$, we may assume that $m_k$ converges to $m \in P$, and $u_{\alpha+\beta}(k), u_{\alpha}(k)$, and $u_{\alpha- \beta}(k)$ converge to $u_{\alpha+\beta}, u_{\alpha}$, and $u_{\alpha- \beta}$, respectively. Replacing $x$ by $m^{-1}.x$ does not affect the conclusions of the lemma, so we will assume henceforth that $(m_k)$ is trivial. 
  Because the groups $A,U^+,$ and $\exp(\lieg_{\alpha+\beta})$ fix $o$, and because $\lieg_{+1}$ is abelian, we are left with:
  $$ p_k.o= \exp(T_k) a_k n_k.o = \exp(\mu_k u_{\alpha}(k)+ \nu_k u_{\alpha- \beta}(k)).o$$
  Consider  $v_k:=\Ad(\exp(\mu_k u_{\alpha}(k)+ \nu_k u_{\alpha- \beta}(k)))(E_1)$; it can be written:
  $$ v_k=E_1+\mu_k [u_{\alpha}(k),E_1]+\nu_k [u_{\alpha- \beta}(k),E_1]+w_k, \ \mbox{where } w_k \in \lieg_{+1}.$$
  
   Now $[u_{\alpha}(k),E_1] \rightarrow [u_{\alpha},E_1] \in \lieg_{\beta} \backslash \{ 0 \}$ and $[u_{\alpha- \beta}(k),E_1] \rightarrow [u_{\alpha- \beta},E_1] \in \liea \backslash \{ 0 \}$. 
 If $(\mu_k)$ and $(\nu_k)$ were both bounded, then a subsequence of $\exp(\mu_k u_{\alpha}(k)+ \nu_k u_{\alpha- \beta}(k))$ would have a limit $\ell$ in $P$, giving $x=\ell.o$. The assumption that $x$ is in $\overline{\mathcal{O}} \backslash \mathcal{O}$ thus forces $\max(\mu_k, \nu_k) \to + \infty$. 
 It follows that the component of $v_k$ on $\liea\oplus\lieg_{\beta}$ tends to infinity; in particular the component of $v_k$ on $\liep$ tends to infinity, while its component on $\lieg_{-1}$ is constant equal to $E_1$. Thus 
  \[ x=\lim_{k \to \infty}[\Ad(\exp(\mu_k u_{\alpha}(k)+ \nu_k u_{\alpha- \beta}(k)))(E_1)] \in \bf P(\liep) \]
  \end{proof}

A point $\hat{x} \in \hm$ where $\eta(\hat{x}) \in {\bf P}(\liep)$ projects to a point $x\in M$ at which a local projection  $X$ of $\tilde{X}$ vanishes.  But $\tilde{X}$ is never $0$ on $\tilde{M}$, therefore any $X$ is also nonvanishing.  Therefore $\eta(\hm)$ does not meet $\overline{\mathcal{O}} \backslash \mathcal{O}$, so $\eta(\hm) = \mathcal{O}$, as claimed.
\end{proof}

Because $Q$ is the stabilizer in $P$ of $o$, the inverse image
$\widehat{R}:=\eta^{-1}( o)$ is the analytic reduction of $\hm$ to $Q$ claimed in point (3) (c) of Theorem \ref{thm.pp-wave}. 
Observe that $\eta$ was defined in terms of the lightlike line field $\mathcal{D}$ and its local sections $X$ corresponding to conformal vector fields. The $\Conf^{loc}(M)$-invariance of $\widehat{R}$ then follows from the $\Conf^{loc}(M)$-invariance of $\mathcal{D}$ and the invariance up to scale of these local sections.

The next step is to propagate the local gravitational pp-wave metrics over all of $M$, which will be obtained as reductions of $\widehat{R}$. Let $\widehat{R}_V = \hat{V}.Q$, an open subset of $\widehat{R}$. Let $\hat{x} \cdot q$ be an arbitrary point of $\widehat{R}_V$, where $\hat{x} \in \hat{V}$ and $q \in Q$.  The evaluation $\omega(T_{\hat{x} \cdot q} \widehat{R}_V)$ contains $\Ad(q)(\lieg_{-1} + \lieu^+)$ and $\lieq$, which are easily seen to both be contained in $\lieg_{-1} \oplus \lieq$.  After comparing dimensions, we conclude $\omega(T_{\hat{y}} \widehat{R}_V) = \lieg_{-1} + \lieq$ for all $\hat{y} \in \widehat{R}_V$. By analyticity, this property holds on all of $\widehat{R}$, that is, $\widehat{R}$ is parallel with respect to the subalgebra $\lieg_{-1} + \lieq$.

The line $\BR E_1$ is $\Ad(Q)$-invariant, therefore  $\left. \omega \right|_{\widehat{R}}^{-1}(\BR E_1)$ is a right-$Q$-invariant distribution on $\widehat{R}$, descending to $\mathcal{D}$ on $M$. The distribution $\omega^{-1}(\lieg_{-1} \oplus \lieu^{+})$ on $\widehat{R}$ is right-$Q$-invariant because $\Ad(Q)$ leaves $\lieg_{-1} + \lieu^+$ invariant. The right translates by elements of $Q$ of the integral leaf $\hat{V}$ are  additional integral leaves foliating $\widehat{R}_V$. Finally, the integrability extends from the open subset $\widehat{R}_V$ to all of $\widehat{R}$ by analyticity. Lemma \ref{lem.curvature-values} gives that $\operatorname{Im} \kappa_{\hx} \subset \lieu^+$ and $\kappa_{\hx}(\xi,\eta)=0$ for every $\hx \in \hat{V}$ and every $\xi,\eta$ in $E_1^{\perp}$. Because $\lieu^+$ is $\Ad(Q)$-invariant, and $E_1^{\perp}$ is $\Ad(Q)$-invariant modulo $\liep$, these properties both persist for $\hat{x} \in \widehat{R}_V$, and they extend to all of $\widehat{R}$ by analyticity.  

For any $x \in M$, let $\hat{x} \in \widehat{R}_x$.  Let $\hat{W} \subset \widehat{R}$ be an integral leaf of $\omega^{-1}(\lieg_{-1} \oplus \lieu^+)$ through $\hx$. The second point of Proposition \ref{prop.brinkmann} gives a gravitational $pp$-wave on a neighborhood $W$ of $x$ with parallel lightlike vector field $X$ on $W$ equal the projection of $\left. \omega \right|_{\hat{W}}^{-1}(\hat{E_1})$. It is a section of $\mathcal{D}$ and a local conformal vector field.  The proof of point $(3) (c)$ in Theorem \ref{thm.pp-wave} is complete.

It remains only to compute the bracket relation between $Y$ and any local vector field $X$ as in (c).

\begin{lemma}
Let $V$ be the neighborhood of $x_0$ given by Theorem \ref{thm.pp-wave} (2), and let $X \in \mathcal{X}(V)$ be the parallel lightlike vector field for the gravitational pp-wave metric $g_0$ on $V.$
The vector fields $Y$ and $X$ on $V$ satisfy the bracket relation  $[Y,X] = 2 X$. 
\end{lemma}

\begin{proof}
From the linearization and the construction of $X$, the lightlike tangent vector $X_{x_0}$ is the fastest eigenvector of $D_{x_0}  \varphi^t_Y$, with eigenvalues $ e^{-2t}$. The flow of $Y$ preserves the lightlike line distribution $\mathcal{D}$, so $[Y,X]$ is collinear to $X$ at every point of $V$.  By Lemma \ref{lem.vector-fields-proportional}, there is a constant $\lambda$ such that $[Y,X]=  \lambda X$.  At $x_0$, we have 
$$[Y,X]_{x_0} = \left. \frac{\mathrm{d}}{\mathrm{d} t} \right |_{t=0} (D_{x_0} \varphi^t_Y)^{-1}X_{x_0} = 2X_{x_0}$$
Therefore $\lambda = 2$.
\end{proof}

To complete the proof of Theorem \ref{thm.pp-wave} part (3) (d), take the unique extension $\tilde{X}$ of any lift of $X$ to the universal cover $\tilde{M}$.  The lift $\tilde{Y}$ of $Y$ to $\tilde{M}$ satisfies $[\tilde{Y},\tilde{X}] = 2 \tilde{X}$ on a nonempty open subset of $\tilde{M}$ by the above lemma.  Therefore by analyticity, the relation holds on all of $\tilde{M}$.  Then for any local projection $X$ of $\tilde{X}$ to an open subset of $M$, the bracket is $[Y,X] = 2X.$

(Note that the rescaling of $Y$ mentioned in the statement of Theorem \ref{thm.pp-wave} 3(d) happened at the beginning of this Section \ref{sec.pp-wave}, where we arranged that $a=b=1$.)

\section{Linear balanced singularity implies conformal flatness} 
\label{sec.balanced}

This section will conclude the proof of Theorem \ref{thm.local.lichnerowicz}. The strategy, as indicated in Section \ref{sec.outline}, is to follow the orbit of a point near the initial balanced singularity to an $\alpha$-limit point and to study the flow at this new limit point.  It can be described relatively precisely thanks to the Cartan connection.  The conformal vector field at this point gives rise, with the help of a recurrence argument, to a polarization, which is shown to be distinct from the one found in Section \ref{sec.polarization} above.  Then Lemma \ref{lem.polarization} implies conformal flatness.  The results of this section are summarized in the following proposition:

\begin{proposition}
\label{prop.balanced}
 Let $(M,g)$ be a closed, real-analytic, Lorentzian manifold. Let $Y$ be a conformal vector field on $M$. If $Y$ admits a singularity $x_0$ which is balanced, then $(M,g)$ is conformally flat.    
\end{proposition}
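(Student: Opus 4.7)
My plan is to argue by contradiction: assume $(M,g)$ is not conformally flat. By Proposition \ref{prop.pp-wave}, there exists a global, $\Conf^{loc}(M)$-invariant, analytic lightlike line field $\mathcal{D}$ on $M$ with respect to which the conformal structure is polarized, and which is locally spanned (uniquely up to scalar by Lemma \ref{lem.vector-fields-proportional}) by a nonvanishing conformal vector field. Near $x_0$, pick such a field $X$, lifted in the Cartan bundle to the direction $E_1 \in \lieg_{-\alpha+\beta}$; at $x_0$, the direction $\mathcal{D}_{x_0}$ is the most contracted eigendirection of the linearization $\bar{h}^t = \diag(e^{-2t},e^{-t}R^t,1)$ of $\varphi_Y^t$, transverse to the local singular curve $\gamma \subset \{Y=0\}$ tangent to $E_n$. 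The invariance of $\mathcal{D}$ under $\varphi_Y^t$ combined with Lemma \ref{lem.vector-fields-proportional} forces $(\varphi_Y^t)_* X = c(t) X$ on a neighborhood of $x_0$, and evaluating at $x_0$ gives $c(t) = e^{-2t}$, hence the bracket $[Y,X] = 2X$. Thus $(Y,X)$ generate a local action of the affine group $\Aff(\BR)^+$, and the integral curve $\gamma_X$ of $X$ through $x_0$ is a null geodesic segment tangent to $\mathcal{D}$ and transverse to $\gamma$.

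I would next exploit the compactness of $M$ to locate a second singularity $y_0 \neq x_0$ of $Y$ on the closure of $\gamma_X$. Since $\gamma_X$ is a one-dimensional $\Kill^{loc}_Y$-orbit, Theorem \ref{thm.stratification} constrains its closure in exactly the manner used in the mixed case (Lemma \ref{lem.closure}), producing $\overline{\gamma_X} = \{x_0\} \cup \gamma_X \cup \{y_0\}$ for a singularity $y_0$ of $Y$. A balanced-case adaptation of the development argument of Proposition \ref{prop.conjugated} then identifies the linearization of $Y$ at $y_0$: the development $\gamma_{X*}$ in $\Ein$ traces the $h^t$-invariant photon $\Delta_1 = [\Span(e_0,e_1)]$, and $y_0$ corresponds to the other $h^t$-fixed point $[e_1]$. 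The Cartan conjugation then gives $\omega_{\hat{y}_0}(\hat{Y}) = \Ad(p_1)\, H(1,1)$ for some $p_1 \in P$, whose action on $\lieg_{-1}$ has eigenvalues encoded by $\diag(e^{2t}, e^{t}R^t,1)$, exhibiting $y_0$ as a balanced singularity with the roles of the fast and the fixed eigendirections swapped.

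The contradiction is then obtained by comparing two determinations of the polarization at $y_0$. On one hand, applying Proposition \ref{prop.pp-wave} at $y_0$ (with $-Y$ in place of $Y$ to recover the canonical balanced form), the direction $\mathcal{D}_{y_0}$ must equal the most-contracted eigendirection of $-Y$ at $y_0$, which by the development computation is the tangent direction of $\gamma_X$ at $y_0$. On the other hand, $\mathcal{D}$ is globally defined, analytic, and must also be consistent with the local singular curve of $Y$ through $y_0$, which is tangent to the new fixed direction and therefore transverse to $\gamma_X$. The resulting geometric rigidity either yields a contracting or expanding singularity of $Y$ somewhere along $\gamma_X$ (triggering Corollary \ref{coro.contraction} and hence conformal flatness, a contradiction), or produces a second lightlike line field $\mathcal{D}' \neq \mathcal{D}$ on an open subset with respect to which the structure is also polarized, contradicting Lemma \ref{lem.polarization}.

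The main obstacle is the final step: rigorously deducing, from the incompatibility of the two determinations at $y_0$, either a genuinely distinct second polarization direction or a contracting/expanding singularity. This demands precise control of the analytic variation of $\mathcal{D}$ along $\gamma_X$ and of the local affine structure $\langle Y, X \rangle$, and it is where the assumption of compactness enters most substantively, via the closure property of $\gamma_X$ in $M$ that guarantees the existence of the auxiliary singularity $y_0$. The algebraicity of the isotropy at $x_0$ (Proposition \ref{prop.isotropy-algebraic}) may also be needed to enlarge $\Kill^{loc}_Y$ enough to produce the required second polarization direction from the data at $y_0$.
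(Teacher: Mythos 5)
Your opening is sound and matches the paper: assume non-conformal-flatness, invoke Proposition \ref{prop.pp-wave} to get the invariant lightlike line field $\mathcal{D}$ and local spanning fields $X$, and derive $[Y,X]=2X$. But the pivot of your argument --- that the integral curve $\gamma_X$ of $X$ through $x_0$ is a one-dimensional $\Kill^{loc}_Y$-orbit, so that Theorem \ref{thm.stratification} forces $\overline{\gamma_X}=\{x_0\}\cup\gamma_X\cup\{y_0\}$ --- is a genuine gap, and in fact the assertion is false in this setting. Since $[Y,X]=2X\neq 0$, the field $X$ does \emph{not} centralize $Y$, so $\gamma_X$ has no a priori relation to $\Kill^{loc}_Y$-orbits. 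The mixed-case argument you are transplanting worked because there the singularity was isolated and the stable manifold one-dimensional, which pinned every centralizing field to that curve; here the singular locus is a curve and the strongly stable manifold has codimension one, and the paper proves (via Lemma \ref{lem.1dimensionalKill-loc}: a non-periodic point of $\mathcal{C}$ with $Y\neq 0$ cannot have a one-dimensional orbit) that the $\Kill^{loc}_Y$-orbit of a point $x_1\in\gamma_X\setminus\{x_0\}$ is a \emph{two-dimensional} surface $\Sigma$. Without the stratification theorem applying to $\gamma_X$, the existence of the terminal singularity $y_0$ and of a null geodesic segment containing $\{x_0\}\cup\gamma_X\cup\{y_0\}$ is unsupported.

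Even granting $y_0$, your endgame does not close. The development computation you sketch yields at $y_0$ the isotropy $\Ad(r_1^{-1})Y_0=\operatorname{diag}(-1,1,A,-1,1)$, i.e.\ a time-reversed \emph{balanced} singularity --- not a contracting/expanding one, so Corollary \ref{coro.contraction} is not triggered. And the polarization produced by applying Proposition \ref{prop.pp-wave} to $-Y$ at $y_0$ points along the most-contracted direction of $D\varphi^t_{-Y}$, which is exactly the tangent to $\gamma_X$ at $y_0$; since $\gamma_X$ is an integral curve of $\mathcal{D}$, this coincides with $\mathcal{D}_{y_0}$, so no second polarization appears and Lemma \ref{lem.polarization} gives nothing. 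The paper's actual route is quite different from here on: it works with the two-dimensional orbit $\Sigma$, classifies the possible values of $\omega(\hat{Y})$ over $\overline{\Sigma}$ by an $\Ad(P)$-orbit-closure computation (Lemma \ref{lem.orbit-closure-Ad(P)}), excludes periodic orbits using the algebraicity of the isotropy (Proposition \ref{prop.isotropy-algebraic}) together with Theorem \ref{thm.linearizable} and Lemma \ref{lem.polarization}, and then derives the final contradiction from Schwartz's generalization of the Poincar\'e--Bendixson theorem applied to the flow of $Y$ on $\Sigma$. None of these ingredients has a substitute in your outline, and you yourself flag the last step as unresolved; as written, the proof does not go through.
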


The proposition is proved by contradiction: we assume that $(M,g)$ is not conformally flat, and apply Theorem \ref{thm.pp-wave}. As in point (3) of that theorem, let $\mathcal{D}$ be the lightlike line field with respect to which $(M,g)$ is polarized, and set
$$\mathcal{C} = \{x \in M \ | \ Y_x \in \mathcal{D}_x\}$$ 
Note that $\mathcal{C}$ is a closed, even real-analytic.  It is also $\Kill^{loc}_Y$-invariant, since ${\mathcal D}$ is $\Conf^{loc}(M,g)$-invariant.

Let $X$ be a local conformal vector field in a neighborhood of $x_0$ with values in $\mathcal{D}$, as in Theorem \ref{thm.pp-wave} (3c). The bracket relation in $[Y,X]=2X$ of (3d) gives 
$$D_{x_0} \varphi_X^t (Y_{x_0}) = Y_{\varphi_X^t(x_0)} + 2tX_{\varphi_X^t(x_0)}$$ 
for all sufficiently small $t$. Choose $x_1:=\varphi_X^{t_1}(x_0)$ for such small ${t_1} >0$.  Then $x_1 \in \mathcal{C}$, and $Y_{x_1} \not =0$ because $Y_{x_0} = 0$ but $X$ is nonvanishing.  The \emph{$\alpha$-limit set} of $x_1$ for the flow $\{ \varphi^t_Y\}$, that is, the set of accumulation points of sequences $(\varphi^{t_k}_Y(x_1))$, for $t_k \to - \infty$, is denoted $\alpha(x_1)$.  A key fact in the proof will be:

\begin{lemma}
\label{lem.nosingularity}
   The vector field $Y$ does not vanish at any point in $\alpha(x_1).$
   \end{lemma}
 \begin{proof}
   Once again, we assume the contrary, namely, that $Y_y = 0$ for $y \in \alpha(x_1)$.  
   There is a decreasing sequence $t_k \to - \infty$ such that $\varphi^{t_k}_Y(x_1) \to y$.
Let $X$ be the nonvanishing lightlike conformal vector field belonging to $\mathcal{D}$, given by Theorem \ref{thm.pp-wave} (3)(c), defined on a neighborhood of $y$.  This neighborhood, call it $U$, can be chosen to be a flow-box for the local flow of $X$, such that there is a transversal $T \subset U$ containing $y$ and $\epsilon > 0$ with
$$\Phi : (s,z) \in (-\epsilon,\epsilon) \times T \mapsto \varphi^s_X(z)$$ 
a diffeomorphism onto its image.  Lastly, since $x_1 \neq y$, it can be assumed that $x_1 \not \in U$. 

 Write $\varphi^{t_k}_Y(x_1)=\Phi(s_k,y_k)$ with $y_k \to y$ in $T$.  Let $\gamma(s):=\Phi(s,y) = \varphi^t_X(y)$, parametrizing a null geodesic segment through $y$, for $s \in (-\epsilon, \epsilon)$.  The relation $[Y,X]=2X$ of Theorem \ref{thm.pp-wave} (3)(d) gives $\varphi^t_Y(\gamma(s)) = \gamma(e^{-2t}s)$ for $t \geq 0$. The backwards $\{ \varphi^t_Y \}$-semi-orbit of $x_1$ does not meet the segment $\gamma$: if $\varphi^{t}_Y(x_1)=\gamma(s)$ for some $t \leq 0$ and some $s \in (-\epsilon,\epsilon)$, then $x_1=\gamma(e^{2 t}s)$ would belong to $U$, contradicting the choice of $U$. In particular, $y_k \not = y$ for all $k$. 

 On the other hand, the segments $\Delta_k=\{ \Phi(s,y_k)  \  | \ s \in (-\epsilon,\epsilon)\}$ are completely contained in this semi-orbit $\{\varphi^t_Y(x_1)\}_{t \leq 0}$. Each such semgent intersects the semi-orbit in a point $p_k = \Phi(s_k,y_k)$, and they are tangent at this point because $p_k = \varphi^{t_k}_Y(x_1) \in \mathcal{C}$.  If a point of $\Delta_k$ were outside the semi-orbit, it would mean that for some $T > 0$, the set 
 $$\{\varphi^t_Y(x_1)\}_{t \leq -T} \subset \Delta_{y_k}$$
 But that would contradict $y \in \alpha(x_1)$. 
 
 Now for each $k \in \BN$, let $f_k: (-\epsilon,\epsilon) \to \BR^*$ be such that
 $$Y_{\Phi(s,y_k)} =f_k(s) X_{\Phi(s,y_k)} \ \forall s \in (-\epsilon,\epsilon)$$
 And let $f : (-\epsilon,\epsilon) \to \BR$ be such that 
 $$Y_{\gamma(s)}=f(s) X_{\gamma(s)} \ \forall s \in (-\epsilon,\epsilon)$$
  By the relation $\varphi^t_Y(\gamma(s)) = \gamma(e^{-2t}s)$, 
the only $0$ of $f$ is $s=0$ and $f$ takes opposite signs on $(-\epsilon, 0)$ and $(0,\epsilon)$.  
For any $0 \neq \sigma \in (-\epsilon,\epsilon)$, the values $f_k(\sigma) \to f(\sigma)$ and $f_k(-\sigma) \to f(-\sigma)$. However $f(\sigma)f(-\sigma)<0$ while $f_k(\sigma)f_k(-\sigma)>0$ for all $k$, a contradiction.
\end{proof}

Because the set $\alpha(x_1)$ is compact and $\{ \varphi^t_Y\}$-invariant, it contains recurrent points for $\{ \varphi^t_Y\}$.  

\begin{proposition}(compare \cite[Prop. 5.1]{frances.open.dense},
  \cite[Prop. 3.5]{frances.lorentz.3d})
  \label{prop.recurrence}
  Let $(M,g)$ be a compact, real-analytic
  pseudo-Riemannian manifold, and let $Y$ be a conformal vector field such that the flow $\{ \varphi^t_Y \}$ has noncompact closure in $\Conf(M,g)$. 
 At each recurrent 
  point $z$ for $\{ \varphi^t_Y \}$, there exists a  local conformal vector field $Z$ commuting with $Y$, vanishing at $z$, and admitting a noncompact isotropy flow at $z$. 
    \end{proposition}
  See Section \ref{sec.stratification} to recall the definition of the isotropy on the Cartan bundle. 

Now let $\widehat{R} \subset \hm$ be the $\Conf^{loc}$-invariant reduction given by Theorem \ref{thm.pp-wave} (3)(c), 
 to the group $Q<P$ stabilizing the subspace $\BR E_1 = \lieg_{\beta-\alpha}$ under the adjoint representation of $P$ on $\lieg$. 
 Recall from the construction of $\widehat{R}$ that for a neighborhood $U$ of $x_0$ in which $X$ is defined,  
 the set $\widehat{R}_U$
 comprises points $\hx$ where $\omega_{\hx}(\hat{X}) \in \BR E_1$ modulo $\liep$, where $\hat{X}$ is the lift of $X$ to $\hm$.
Recall that for suitable $\hat{x}_0 \in \hm_{x_0}$ 
\begin{equation*}
\omega_{\hx_0}(\hat{Y}) =
\begin{pmatrix}
1 & & & & \\
& -1 & & & \\
& & A & & \\
& & & 1 & \\
& & & & -1
\end{pmatrix}
=: Y_0
\label{eq.holonomyYatx0}
\end{equation*}

with $A \in \so(n-2)$. Recall the decomposition of $\so(2,n)$ from Section \ref{sec.algebraic-facts}.  Denote by $Y_{\liea}$ and $Y_{\liem}$ the $\liea$- and $\liem$-components of $Y_0$.

\begin{lemma}
\label{lem.values-on-alpha-limit}
    For any $z \in \alpha(x_1)$, there exists $\hat{z} \in \widehat{R}_z$ such that $\omega(\hat{Y}_{\hat{z}})=Y_{\beta-\alpha}'+Y_{\liea} + Y_{\liem}'$, where $Y_{\beta-\alpha}' \in \lieg_{\beta-\alpha} \backslash \{0\}$ and $Y_{\liem}' \in \liem$.
\end{lemma}

\begin{proof}
Recall that $K$ is semibasic, so $K_{\hx_0}(\hat{Y},\hat{X}) = 0$. By Lemma 2.1 of \cite{bfm.zimemb}, this yields
\[2\omega_{\hx_0}(\hat{X}) = \omega_{\hx_0}([\hat{Y},\hat{X}]) = -[ \omega_{\hx_0}(\hat{Y}),\omega_{\hx_0}(\hat{X})] = - [Y_0,\omega_{\hx_0}(\hat{X})].\]
Hence, $\ad(Y_0)\,\omega_{\hx_0}(\hat{X}) = -2\,\omega_{\hx_0}(\hat{X})$. 
Considering the action of $\ad(Y_0)$ on the root spaces of $\lieo(2,n)$, this implies that $\omega_{\hx_0}(\hat{X}) \in \lieg_{\beta-\alpha}$. Therefore $\hx_0 \in \widehat{R}$.

Let $\hx_1 := \varphi_{\hat{X}}^{t_1}(\hx_0)$.  From the relation 
$$(\varphi_{\hat{X}}^t)_* \hat{Y}_{\hx_0} = \hat{Y}_{\varphi_{\hat{X}}^t(\hx_0)} + 2t \hat{X}_{\varphi_{\hat{X}}^t(\hx_0)}$$
it follows htat $\hx_1 \in \widehat{R}$ and $\omega_{\hx_1}(\hat{Y}) = Y_0 - 2t_1 \omega_{\hx_0}(\hat{X})$. 
Set $Y_{\beta-\alpha} := -2t_1 \omega_{\hx_0}(\hat{X})$, so that $\omega_{\hx_1}(\hat{Y}) = Y_{\beta-\alpha} + Y_0$; note that $Y_{\beta-\alpha} \neq 0$.

Let $t_k \rightarrow -\infty$ such that $\varphi^{t_k}_Y(x_1)=z$. Lifting to $\widehat{R}$, there is a sequence $(q_k)$ in $Q$ such that $\varphi_Y^{t_k}(\hx_1).q_k \to \hat{z}$, for some $\hat{z} \in \widehat{R}_{z}$. This the value $\omega_{\hz}(\hat{Y})$ is in the closure of $\Ad(Q)(Y_{\beta-\alpha} + Y_0)$ in $\lieg$. Decompose $Q$ as a semi-direct product $(A . M) \ltimes ( G_{\beta} . G_{\alpha+\beta})$, and write $q = q_0 e^{X_{\beta}} e^{X_{\alpha+\beta}}$ for the corresponding decomposition of an element $q \in Q$.  Now compute
\begin{align*}
    \Ad(q) (Y_{\beta-\alpha}+Y_0) & = \lambda Y_{\beta-\alpha} + \Ad(q_0) (Y_0 + [X_{\beta},Y_0]) \\
    & = \lambda Y_{\beta-\alpha} + Y_{\liea} + \Ad(q_0) Y_{\liem} + \underbrace{\Ad(q_0) [X_{\beta},Y_0]}_{\in \lieg_{\beta}}
\end{align*}
for some $\lambda \in \BR^*$, since $Y_0$ centralizes $\lieg_{\alpha+\beta}$. 
Consequently, any element $T$ in the closure of $\Ad(Q)(Y_{\beta-\alpha}+Y_0)$ will be of the form 
\begin{align*}
    T = T_{\beta-\alpha} + Y_{\liea} + T_{\liem} + T_{\beta}. 
\end{align*}
Since $[Y_{\liea},T_{\beta}] = - T_{\beta}$ and $T_{\beta}$ centralizes the other terms, this gives 
\begin{align*}
    \Ad(e^{-T_{\beta}}) T = T_{\beta-\alpha} + Y_{\liea} + T_{\liem}.
\end{align*}
Hence, up to right-multiplication of $\hz$ by some $q \in G_{\beta} < Q$, the value $\omega_{\hz}(\hat{Y})=Y_{\beta-\alpha}' + Y_{\liea} + Y_{\liem}'$, with $Y_{\beta-\alpha}' \neq 0$ since $Y_z \neq 0$ from Lemma \ref{lem.nosingularity}.
\end{proof}
 
 Now let $\hz \in \widehat{R}_z$ as given by Lemma \ref{lem.values-on-alpha-limit}. Because $Z_z =0$ and $Z$ preserves $\widehat{R}$, the value $\omega(\hat{Z}_{\hz}) \in \lieq$. Examining the curvature form at $\hz$ leads to $[\omega(\hat{Y}_{\hz}),\omega(\hat{Z}_{\hz})]=0$. The possible values for $\omega(\hat{Z}_{\hz})$ are then determined by the following algebraic result:
   
\begin{lemma}
\label{lem.centralizer-of-omega(Y)}
   Any $S \in \lieq$ centralizing an element of the form $T = T_{\beta-\alpha} + Y_{\liea} + T_{\liem}$, with $Y_{\liea}$ as above and $T_{\beta-\alpha} \not =0$ in $\lieg_{\beta-\alpha}$, is in the $\Ad(Q)$-orbit of an element of the form either 
    \begin{equation*}
         \diag(\lambda,\lambda,B,- \lambda, - \lambda) , \ \lambda \neq 0 \text{ or } \diag(0,0,B,0,0) + S_{\alpha+\beta}.
     \end{equation*}    
\end{lemma}

\begin{proof}
Decompose $S$ along the root spaces of $\lieq$, writing $S=S_{\liea}+S_{\liem}+S_{\beta}+S_{\alpha+\beta}$. As $[T,S]=0$, 
$$[T,S]_{\beta-\alpha}=[T_{\beta-\alpha},S_{\liea}]=0=(\alpha(S_{\liea})-\beta(S_{\liea}))T_{\beta-\alpha}$$
Since $T_{\beta-\alpha} \not = 0$, necessarily $\alpha(S_{\liea})=\beta(S_{\liea})$.

Similar calculation of the component along $\lieg_{\beta}$ of $[T,S]$ yields $0 = -S_{\beta}+[T_{\liem},S_{\beta}]$. Since $\ad(T_{\liem})|_{\lieg_{\beta}} : \lieg_{\beta} \to \lieg_{\beta}$ is skew-symmetric with respect to the positive definite bilinear form $B_{\theta} = - B(\theta \cdot, \cdot)$, its eigenvalues are purely imaginary. Therefore, $S_{\beta}=0$.

Now $S$ is of the form $S=S_{\liea}+S_{\liem}+S_{\alpha+\beta}$, with $\alpha(S_{\liea})=\beta(S_{\liea})$. If $\alpha(S_{\liea}) \not = 0$, conjugation by a suitable element of $G_{\alpha+\beta} < Q$ eliminates the component $S_{\alpha+\beta}$. The desired conlcusion now follows, with $\lambda = \alpha(S_{\liea})$.
\end{proof}

Now apply Lemma \ref{lem.centralizer-of-omega(Y)} to $S=\omega(\hat{Z}_{\hz})$ and $T = \omega(\hat{Y}_{\hz})$. 

If $\omega(\hat{Z}_{\hz})$ is of the form $\diag(\lambda,\lambda,B,- \lambda, - \lambda) $, then $Z$ has linear balanced dynamics near $z$, yielding  a polarization $\mathcal{D}'$ of the conformal structure on $M$ by point 3(b) of Theorem \ref{thm.pp-wave}. On the other hand, the singular locus of $Z$ coincides locally with the $\{ \varphi^t_Y\}$-orbit of $z$. Because $z \in {\mathcal C}$ and $Y_z \not =0$,  the tangent to this orbit at $z$ is ${\mathcal D}_z$. Point 3(a) of Theorem \ref{thm.pp-wave} then ensures that $\mathcal{D} \neq \mathcal{D}'$, and Lemma \ref{lem.polarization} implies that $M$ is conformally flat, a contradiction.
 
Next suppose $\omega(\hat{Z}_{\hz}) = \diag(0,0,B,0,0) + Z_{\alpha+\beta}$. Let $\{ h^t\}< P$ be the one-parameter group of generated by this element. By Proposition \ref{prop.recurrence}, it is noncompact, hence $Z_{\alpha+\beta} \not =0$. Recall that $\{ \varphi^t_Z \}$ is linearizable around  $z$ if and only if $\{ h^t\}$ is conjugate in $P$ into $G_0$, or equivalaently, if and only if $\{h ^t \}$ is linearizable on $\Ein$around $[e_0]$.  Observe that $\{ D_{[e_0]}h^t\}=\{e^{tB}\}$, a compact subgroup of $\operatorname{SO}(1,n-1)$. Hence, if $\{h^t\}$ were linearizable around $[e_0]$, the $\{ h^t\}$-orbit of every point $x \not = [e_0]$ close to $[e_0]$ would be contained in a compact subset not containing $[e_0]$. However, the action of $\{h^t\}$ on the photon $\Delta:=[\Span(e_0,e_n)]$ is by a nontrivial projective parabolic flow. For such a flow, there is $x \not = [e_0]$ in $\Delta$, arbitrarily close to $[e_0]$, such that $\lim_{t \to + \infty}\varphi^t_Z(x) =  [e_0]$: contradiction. This completes the proof of Proposition \ref{prop.balanced} and also of Theorem \ref{thm.local.lichnerowicz}.
 
\bibliographystyle{amsalpha} 
\bibliography{karinsrefs}

\providecommand{\bysame}{\leavevmode\hbox to3em{\hrulefill}\thinspace}
\providecommand{\MR}{\relax\ifhmode\unskip\space\fi MR }
\providecommand{\MRhref}[2]{%
  \href{http://www.ams.org/mathscinet-getitem?mr=#1}{#2}
}
\providecommand{\href}[2]{#2}
\begin{thebibliography}{BFM09}

\bibitem[Ale85]{aleks.selfsim}
D.~Alekseevskii, \emph{Self-similar {L}orentzian manifolds}, Ann. Global Anal.
  Geom. \textbf{3} (1985), no.~1, 59--84.

\bibitem[Amo79]{amores.killing}
A.~M. Amores, \emph{Vector fields of a finite type ${G}$-structure}, J. Diff.
  Geom. \textbf{14} (1979), no.~1, 1--16.

\bibitem[AS97a]{as.lorisom1}
S.~Adams and G.~Stuck, \emph{The isometry group of a compact {L}orentz manifold
  {I}}, Inv. Math. \textbf{129} (1997), no.~2, 239--261.

\bibitem[AS97b]{as.lorisom2}
\bysame, \emph{The isometry group of a compact {L}orentz manifold {II}}, Inv.
  Math. \textbf{129} (1997), no.~2, 263--287.

\bibitem[BFM09]{bfm.zimemb}
U.~Bader, C.~Frances, and K.~Melnick, \emph{An embedding theorem for
  automorphism groups of {C}artan geometries}, Geom. Funct. Anal. \textbf{19}
  (2009), no.~2, 333--355.

\bibitem[{\v{C}}S09]{cap.slovak.book.vol1}
A.~{\v{C}}ap and J.~Slov\'ak, \emph{Parabolic geometries {I}}, Mathematical
  Surveys and Monographs, vol. 154, American Mathematical Society, Providence,
  RI, 2009.

\bibitem[DG91]{dag.rgs}
G.~D'Ambra and M.~Gromov, \emph{Lectures on transformation groups: geometry and
  dynamics}, Surveys in Differential Geometry (Lehigh University, Bethlehem,
  PA, 1990) (Cambrige, MA), 1991, pp.~19--111.

\bibitem[Fer76]{lf.conf.regularity}
J.~Ferrand, \emph{Geometrical interpretation of scalar curvature and regularity
  of conformal homeomorphisms}, Differential geometry and relativity, vol.~3,
  Reidel, Dordrecht, 1976, pp.~1--44.

\bibitem[FM13]{fm.champsconfs}
C.~Frances and K.~Melnick, \emph{Formes normales pour les champs conformes
  pseudo-riemanniens}, Bull. Soc. Math. France \textbf{141} (2013), no.~3,
  377--421.

\bibitem[FM23]{fm.lich3d}
\bysame, \emph{The {L}orentzian {L}ichnerowicz {C}onjecture for real-analytic,
  three-dimensional manifolds}, J. Reine Angew. Math. (2023), no.~803,
  183--218.

\bibitem[Fra05]{frances.nogloballich}
C.~Frances, \emph{Sur les vari\'et\'es lorentziennes dont le groupe conforme
  est essentiel}, Math. Ann. \textbf{332} (2005), no.~1, 103--119.

\bibitem[Fra07]{frances.ccvf}
\bysame, \emph{Causal conformal vector fields, and singularities of twistor
  spinors}, Ann. Global Anal. Geom. \textbf{32} (2007), no.~3, 277--295.

\bibitem[Fra12a]{frances.degenerescence}
\bysame, \emph{D\'eg\'enerescence locale des transformations conformes
  pseudo-riemanniennes}, Ann. Inst. Fourier \textbf{62} (2012), no.~5,
  1627--1669.

\bibitem[Fra12b]{frances.locdyn}
\bysame, \emph{Local dynamics of conformal vector fields}, Geom. Ded.
  \textbf{158} (2012), 35--59.

\bibitem[Fra14]{francoeur}
D.~Francoeur, \emph{G\'eom\'etries de {C}artan et pr\'e-g\'eod\'esiques de type
  lumi\`ere}, M\'emoire Master, Universit\'e de Sherbrooke, Sherbrooke, Canada,
  2014.

\bibitem[Fra15]{frances.pq.counterexs}
C.~Frances, \emph{About pseudo-{R}iemannian {L}ichnerowicz conjecture},
  Transform. Groups \textbf{20} (2015), no.~4, 1015--1022.

\bibitem[Fra18]{frances.open.dense}
\bysame, \emph{Variations on {G}romov's open-dense orbit theorem}, Bull. Soc.
  Math. France \textbf{146} (2018), no.~4, 713--744.

\bibitem[Fra20]{frances.lorentz.3d}
\bysame, \emph{Lorentz dynamics on closed 3-manifolds}, Ann. H. Lebesgue
  \textbf{3} (2020), 407--471.

\bibitem[Gro88]{gromov.rgs}
M.~Gromov, \emph{Rigid transformations groups}, G\'eom\'etrie Diff\'erentielle
  (Paris, 1986) (D.~Bernard and Y.~Choquet-Bruhat, eds.), Hermann, Paris, 1988,
  pp.~65--139.

\bibitem[KH93]{katok.hasselblatt}
A.~Katok and B.~Hasselblatt, \emph{Introduction to the modern theory of
  dynamical systems}, Cambridge University Press, 1993.

\bibitem[Kob95]{kobayashi.transf}
S.~Kobayashi, \emph{Transformation groups in differential geometry}, Springer,
  Berlin, 1995.

\bibitem[LF71]{lf.lich}
J.~Lelong-Ferrand, \emph{Transformations conformes et quasiconformes des
  vari\'et\'es riemanniennes; application \`a la d\'emonstration d'une
  conjecture de {A}. {L}ichnerowicz}, Acad. Roy. Belg. Cl. Sci. M\'em.
  \textbf{39} (1971), no.~5, 5--44.

\bibitem[Mat92]{Matsumoto}
S.~Matsumoto, \emph{Foundations of flat conformal structures}, Advanced Studies
  in Pure Mathematics 20, Aspects of Low Dimensional Manifolds, 1992,
  pp.~167--261.

\bibitem[Meh25]{mehidi}
L.~Mehidi, \emph{Conformal quotients of plane waves, and {L}ichenrowicz
  conjecture in a locally homogeneous setting}, Arxiv:250308614, 2025.

\bibitem[Mel11]{me.frobenius}
K.~Melnick, \emph{A {F}robenius theorem for {C}artan geometries, with
  applications}, Enseign. Math. S\'er. II \textbf{57} (2011), no.~1-2, 57--89.

\bibitem[MN26]{mn.tractors}
K.~Melnick and K.~Neusser, \emph{An embedding theorem for tractor bundles, and
  an application in conformal pseudo-{R}iemannian geometry}, Symmetry,
  Integrability, and Geometry: Methods and Applications (SIGMA) \textbf{22}
  (2026), no.~061, 22 pages, \url{arxiv.org/abs/2505/14665}.

\bibitem[MP22a]{mp.dambraconf}
K.~Melnick and V.~Pecastaing, \emph{The conformal group of a compact
  simply-connected {L}orentzian manifold}, J. Amer. Math. Soc. \textbf{35}
  (2022), 81--122.

\bibitem[MP22b]{mp.confdambra}
\bysame, \emph{The conformal group of a compact simply connected {L}orentzian
  manifold}, J. Amer. Math. Soc. \textbf{35} (2022), no.~1, 81--122.

\bibitem[Oba71]{obata.lich}
M.~Obata, \emph{The conjectures on conformal transformations of {R}iemannian
  manifolds}, J. Differential Geom. \textbf{6} (1971), 247--258.

\bibitem[Pec16]{pecastaing.frobenius}
V.~Pecastaing, \emph{On two theorems about local automorphisms of geometric
  structures}, Ann. Inst. Fourier (Grenoble) \textbf{66} (2016), no.~1,
  175--208.

\bibitem[Pec17]{pecastaing.can.sl2r}
\bysame, \emph{Essential conformal actions of {PSL}(2,{{\bf R}}) on
  real-analytic compact {L}orentz manifolds}, Geom. Dedicata \textbf{188}
  (2017), 171--194.

\bibitem[Pec18]{pecastaing.ss}
\bysame, \emph{Lorentzian manifolds with a conformal action of {SL}(2,{{\bf
  R}})}, Comment. Math. Helvetici \textbf{93} (2018), 401--439.

\bibitem[Pec23]{pecastaing.solvable}
\bysame, \emph{Conformal actions of solvable {L}ie groups on closed
  {L}orentzian manifolds}, \url{arxiv.org/abs/2307.05436}, 2023.

\bibitem[Rat19]{Ratcliffe}
J.~G. Ratcliffe, \emph{Foundations of hyperbolic manifolds}, Graduate Texts in
  Mathematics, Springer, 2019.

\bibitem[Sha96]{sharpe}
R.W. Sharpe, \emph{Differential geometry : {C}artan's generalization of
  {K}lein's {E}rlangen program}, Springer, New York, 1996.

\bibitem[Zeg98]{zeghib.lorisom1}
A.~Zeghib, \emph{The identity component of the isometry group of a compact
  {L}orentz manifold}, Duke Math. J. \textbf{92} (1998), no.~2, 321--333.

\end{thebibliography}

\end{document}